\definecolor{refkeybis}{gray}{.65}
\definecolor{labelkeybis}{gray}{.65}
{\makeatletter
\def\SK@refcolor{\color{refkeybis}}%
\def\SK@labelcolor{\color{labelkeybis}}}
\numberwithin{equation}{section} 
\newtheorem{theorem}{Theorem}[section]
\newtheorem{lemma}[theorem]{Lemma}
\newtheorem{definition}[theorem]{Definition}
\newtheorem{remark}[theorem]{Remark}
\newtheorem{proposition}[theorem]{Proposition}
\newtheorem{corollary}[theorem]{Corollary}
\newcommand{\N}{\mathbf{N}}
\newcommand{\R}{\mathbf{R}}
\newcommand{\Haus}[1]{{\mathscr H}^{#1}} 
\newcommand{\Leb}[1]{{\mathscr L}^{#1}} 
\newcommand{\x}{\times}
\renewcommand{\>}{\rangle}
\renewcommand{\a}{\alpha}
\newcommand{\cl}{\overline}
\renewcommand{\d}{\delta}
\newcommand{\e}{\varepsilon}
\newcommand{\g}{\gamma}
\renewcommand{\l}{\lambda}
\renewcommand{\i}{\infty}
\newcommand{\p}{\partial}
\newcommand{\supp}{\operatorname{spt}}
\newcommand{\intr}{\operatorname{int}}
\newcommand{\dist}{\operatorname{dist}}
\newcommand{\diam}{\operatorname{diam}}
\newcommand{\co}{\operatorname{co}}
\newcommand\Bzero{{\bf (B0)}}
\newcommand\Bone{{\bf (B1)}}
\newcommand\Btwo{{\bf (B2)}}
\newcommand\Btwos{{\bf (B2u)}}
\newcommand\Bthree{{\bf (B3)}}
\newcommand\Athreew{{\bf (A3w)}}
\newcommand\cross{\hbox{\rm cross}}
\newcommand\dom{\hbox{\rm dom}\thinspace}
\newcommand\zero{{\mathbf 0}}
\newcommand\U{U}
\newcommand\V{V}
\newcommand\uu{u}
\newcommand\tu{{\tilde u}}
\newcommand\vv{v}
\newcommand\bo{{b^0}}
\newcommand\tb{{\tilde b}}
\newcommand\cs{{c^*}}
\newcommand\tc{{\tilde c}}
\newcommand\bq{{\bar q}}
\newcommand\tq{{\tilde q}}
\newcommand\qo{q^0}
\newcommand\bs{{\bar s}}
\newcommand\bx{{\bar x}}
\newcommand\xo{{x_0}}
\newcommand\by{{\bar y}}
\newcommand\tx{{\tilde x}}
\newcommand\ty{{\tilde y}}
\newcommand\normal{{\hat n}}
\newcommand\DASM{{\bf DASM}}
\title{
Continuity and injectivity of optimal maps\\ for non-negatively
cross-curved
costs\thanks{The authors are 
grateful to
the Institute for Pure and Applied Mathematics at UCLA,
the Institut Fourier at Grenoble, and the Fields Institute in Toronto,
for their generous hospitality during various stages of this work.
This research was supported in part by
NSERC grants 217006-03 and -08 and 
NSF grant DMS-0354729.
Y-H.K. is supported partly by 
NSF grant DMS-0635607
through the membership at Institute for Advanced Study at Princeton NJ, and also in part by NSERC grant 371642-09. Any opinions, findings
and conclusions or recommendations expressed in this material are those of authors and do not
 reflect the views of either the Natural Sciences and Engineering
Research Council of Canada (NSERC) or the United States National Science Foundation (NSF).
\copyright 2009 by the authors.
}}
\date{\today}
\author{Alessio Figalli\thanks{Department of Mathematics, University of Texas at Austin, Austin Texas USA {\tt figalli@math.utexas.edu}},
Young-Heon Kim\thanks{ Department of Mathematics, University of
British Columbia, Vancouver BC Canada {\tt yhkim@math.ubc.ca} and
School of Mathematics, Institute for Advanced Study at Princeton
NJ USA} \ and Robert J. McCann\thanks{Department of Mathematics,
University of Toronto, Toronto Ontario Canada M5S 2E4 {\tt
mccann@math.toronto.edu}}}
\begin{document}

\maketitle

\begin{abstract}
Consider transportation of one distribution of mass onto another,
chosen to optimize the total expected cost, where cost per unit
mass transported from $x$ to $y$ is given by a smooth function
$c(x,y)$.  If the source density $f^+(x)$ is bounded away
from zero and infinity in an open region $\U' \subset \R^n$,  and the target
density $f^-(y)$ is bounded away from zero and infinity on its
support $\cl \V \subset \R^n$, which is strongly $c$-convex with respect to
$\U'$, and the transportation cost $c$ is non-negatively
cross-curved, we deduce 
continuity and injectivity of the optimal map inside $\U'$ (so
that the associated potential $u$ belongs to $C^1(\U')$). This
result provides a crucial step in the low/interior regularity setting: in a
subsequent paper~\cite{FigalliKimMcCann09p}, we use it to
establish regularity of optimal maps with respect to the
Riemannian distance squared on arbitrary products of spheres. The
present paper also provides an argument required by Figalli and
Loeper to conclude in two dimensions continuity of optimal maps
under the weaker (in fact, necessary) hypothesis \Athreew\
\cite{FigalliLoeper08p}.  In higher dimensions, if the densities
$f^\pm$ are H\"older continuous,  our result permits
continuous differentiability of the map inside $U'$ (in fact, $C^{2,\alpha}_{loc}$ regularity of the associated
potential) to be deduced from the work of Liu, Trudinger and
Wang~\cite{LiuTrudingerWang09p}.
\end{abstract}

\tableofcontents

\section{Introduction}

Given probability densities $0 \le f^\pm \in L^1(\R^n)$ with respect to Lebesgue measure
$\Leb{n}$ on $\R^n$,
and a cost function $c:\R^n \times \R^n \longmapsto [0,+\infty]$,
Monge's transportation problem is to find a map $G:\R^n \longmapsto \R^n$
pushing $d\mu^+ = f^+ d\Leb{n}$ forward to $d\mu^-= f^-d\Leb{n}$
which minimizes the expected transportation cost \cite{Monge81}
\begin{equation}\label{Monge}
\inf_{G_\#\mu^+ = \mu^-} \int_{\R^n} c(x,G(x)) d\mu(x),
\end{equation}
where $G_\# \mu^+ = \mu^-$ means $\mu^-[Y] = \mu^+[G^{-1}(Y)]$ for
each Borel $Y \subset \R^n$.

In this context it is interesting to know when a map attaining this infimum exists;
sufficient conditions for this were found by
Gangbo \cite{Gangbo95} and by Levin \cite{Levin99}, extending work
of a number of authors described in \cite{GangboMcCann96} \cite{Villani09}.
One may also ask when $G$ will be smooth, in which case it must satisfy the prescribed
Jacobian equation $|\det DG(x)|=f^+(x)/f^-(G(x))$,
which turns out to reduce to a degenerate elliptic
partial differential equation of Monge-Amp\`ere type 
for a scalar potential $\uu$ satisfying $D\uu(\tx)=-D_x c(\tx,G(\tx))$.
Sufficient conditions for this were discovered by
Ma, Trudinger and Wang \cite{MaTrudingerWang05} and
Trudinger and Wang \cite{TrudingerWang07p} \cite{TrudingerWang08p},
after results for the special case $c(x,y)= |x-y|^2/2$ had been
worked out by Brenier \cite{Brenier91}, Delan\"oe \cite{Delanoe91},
Caffarelli \cite{Caffarelli90p} \cite{Caffarelli90} \cite{Caffarelli91} \cite{Caffarelli92} 
\cite{Caffarelli96b}, and Urbas \cite{Urbas97},  and for the
cost $c(x,y)=-\log |x-y|$ and measures supported on the unit sphere by
Wang~\cite{Wang96}. 

If the ratio $f^+(x)/f^-(y)$ --- although bounded away from zero
and infinity --- is not continuous, the map $G$ will not generally
be differentiable, though one may still hope for it to be
continuous. This question is not merely of academic interest,
since discontinuities in $f^\pm$ arise unavoidably in applications
such as partial transport problems \cite{CaffarelliMcCann99}
\cite{BarrettPrigozhin09} \cite{FigalliARMA} \cite{FigalliNote}.
Such results were established for the classical cost
$c(x,y)=|x-y|^2/2$ by Caffarelli \cite{Caffarelli90}
\cite{Caffarelli91} \cite{Caffarelli92}, for its restriction to
the product of the boundaries of two strongly convex sets by
Gangbo and McCann \cite{GangboMcCann00}, and for more general
costs satisfying the strong regularity hypothesis \textbf{(A3)} of
Ma, Trudinger and Wang \cite{MaTrudingerWang05}
--- which excludes the cost $c(x,y)=|x-y|^2/2$ --- by Loeper
\cite{Loeper07p}; see also \cite{KimMcCannAppendices} \cite{Liu09}
\cite{TrudingerWang08p}. Under the weaker and degenerate
hypothesis \textbf{(A3w)} of Trudinger and Wang
\cite{TrudingerWang07p}, which includes the cost
$c(x,y)=|x-y|^2/2$ (and whose necessity for regularity was shown
by Loeper \cite{Loeper07p}), such a result remains absent from the
literature; we aim to provide it below under a slight
strengthening of their condition (still including the quadratic
cost) which appeared in Kim and McCann
\cite{KimMcCann07p}\cite{KimMcCann08p}, called {\em non-negative
cross-curvature}. (Related but different families of
strengthenings were investigated by Loeper and Villani
\cite{LoeperVillani08p} and Figalli and Rifford
\cite{FigalliRifford08p}.) {Our main result is stated in
Theorem~\ref{T:Hoelder}. A number of interesting cost functions do
satisfy non-negative cross-curvature hypothesis, and have
applications in economics \cite{FigalliKimMcCann-econ09p} and
statistics \cite{Sei09p}. Examples include the Euclidean distance
between two convex graphs over two sufficiently convex sets in
$\R^n$ \cite{MaTrudingerWang05}, the Riemannian distance
squared on multiple products of round spheres (and
their Riemannian submersion quotients, including products of
complex projective spaces $\mathbf{CP}^{n}$) \cite{KimMcCann08p},
and the simple harmonic oscillator action \cite{LeeMcCann}.
In a sequel,  we apply the techniques developed here to deduce
regularity of optimal maps in the latter setting
\cite{FigalliKimMcCann09p}. Moreover, Theorem~\ref{T:Hoelder}
allows one to apply the higher interior regularity results established
by Liu, Trudinger and Wang \cite{LiuTrudingerWang09p},
ensuring in particular that the transport map is $C^\infty$-smooth
 if $f^+$ and $f^-$ are.
\\}

Most of the regularity results quoted above derive from one of two
approaches. The continuity method, used by Delano\"e, Urbas, Ma,
Trudinger and Wang,  is a time-honored technique for solving
nonlinear equations.  Here one perturbs a manifestly soluble
problem (such as $|\det DG_0(x)|=f^+(x)/f_0(G_0(x))$ with $f_0=f^+$, so that $G_0(x)=x$) to the problem of interest
($|\det DG_1(x)|=f^+(x)/f_1(G_1(x))$, $f_1=f^-$)
along a family $\{f_t\}_t$ designed to ensure the set of $t
\in[0,1]$ for which it is soluble is both open and closed.
Openness follows from linearization and non-degenerate ellipticity
using an implicit function theorem. For the non-degenerate ellipticity and closedness,
it is required to establish estimates on the size of derivatives of the solutions
(assuming such solutions exist) which depend only on information
known a priori about the data $(c,f_t)$.  In this way one obtains
smoothness of the solution $y=G_1(x)$ from the same argument which
shows $G_1$ to exist.

The alternative approach relies on first knowing existence and
uniqueness of a Borel map which solves the problem in great
generality,  and then deducing continuity or smoothness by close
examination of this map after imposing additional conditions on
the data $(c,f^\pm)$. Although precursors can be traced back to
Alexandrov \cite{Aleksandrov42b}, in the present context this
method was largely developed and refined by Caffarelli
\cite{Caffarelli90} \cite{Caffarelli91} \cite{Caffarelli92}, who
used convexity of $\uu$ crucially to localize the map $G(x) =
D\uu(x)$ and renormalize its behaviour near a point $(\tx,G(\tx))$
of interest in the borderline case $c(x,y) = - \<x, y\>$. For
non-borderline \textbf{(A3)} costs, simpler estimates suffice to
deduce continuity of $G$, as in \cite{GangboMcCann00}
\cite{CafGutHua} \cite{Loeper07p} \cite{TrudingerWang08p}; in this
case Loeper was actually able to deduce an explicit bound $\alpha
= (4n-1)^{-1}$ on the H\"older exponent of $G$ when $n>1$, which
was recently improved to its sharp value $\alpha = (2n-1)^{-1}$ by
Liu \cite{Liu09} using a technique related to the one we develop
below and discovered independently from us; both Loeper and Liu
also obtained explicit exponents $\alpha=\alpha(n,p)$ for
 $f^+ \in L^p$ with $p>n$ \cite{Loeper07p} or $p> (n+1)/2$ \cite{Liu09} and $1/f^- \in L^\infty$.
Explicit bounds on the exponent are much worse in the classical
case $c(x,y) = - \<x, y\>$ \cite{ForzaniMaldonado04},
when such exponents do not even exist unless $\log \frac{f^+(x)}{f^-(y)} \in L^\infty$ \cite{Caffarelli92} \cite{Wangcounterex}.\\

Below we extend the approach of Caffarelli to non-negatively cross-curved costs,
a class which includes the classical quadratic cost. Our idea is to add a null Lagrangian term
to the cost and exploit diffeomorphism (i.e.\ gauge) invariance
to choose coordinates which depend on the point of interest that
restore convexity of $\uu(x)$; our strengthened hypothesis then
permits us to exploit Caffarelli's approach more systematically
than Liu was able to do \cite{Liu09}. However, we still need to
overcome serious difficulties, such as getting an Alexandrov
estimate for $c$-subdifferentials (see Section~\ref{S:Alexandrov})
and dealing with the fact that the domain of the cost function
(where it is smooth and satisfies appropriate cross-curvature
conditions) may not be the whole of $\R^n$. (This situation
arises, for example, when optimal transportation occurs between
domains in Riemannian manifolds for the distance squared cost or
similar type.) The latter is accomplished using
Theorem~\ref{thm:bdry-inter}, where it is first established
that optimal transport does not send interior points to boundary
points, and vice versa, under the strong $c$-convexity hypothesis
\Btwos\ described in the next section. ( For this result to hold,
the cost needs not to satisfy the condition {\bf (A3w)}.)
Without our strengthening of Trudinger and Wang's hypothesis
\cite{TrudingerWang07p} (i.e.\ with only {{\bf (A3w)}}),  we
obtain the convexity of all level sets of $\uu(x)$ in our chosen
coordinates as Liu also did; this yields some hope of applying
Caffarelli's method and the full body of techniques systematized
in Gutierrez \cite{Gutierrez01}, but we have not been successful
at overcoming the remaining difficulties in such generality.
In two dimensions however, there is an alternate
approach to establishing continuity of optimal maps
which applies to this more general case;  it was carried out
by Figalli and
Loeper \cite{FigalliLoeper08p}, but relies on Theorem~\ref{thm:bdry-inter}, first proved below. 

\section{Main result}

Let us begin by formulating the relevant hypothesis on the cost
function $c(x,y)$ in a slightly different format than Ma,
Trudinger and Wang \cite{MaTrudingerWang05}. For each $(\tx,\ty)
\in \cl \U \times \cl \V$
assume:\\
\Bzero\  $\U \subset \R^n$ and $\V \subset \R^n$ are open and bounded and
$c \in C^4\big(\cl \U \times \cl \V\big)$; \\
\Bone\   (bi-twist)
$\left.\begin{array}{c}
        x \in \cl \U \longmapsto -D_y c(x,\ty) 
\cr     y \in \cl \V \longmapsto -D_x c(\tx,y) 
        \end{array}\right\}$ are diffeomorphisms onto their ranges; \\
\Btwo\  (bi-convex)
$\left.\begin{array}{c}
        \U_{\ty} := -D_y c(\U, \ty) \cr
        \V_{\tx} := -D_x c(\tx, \V)
        \end{array}\right\}$ are convex subsets of $\R^n$; \\
\Bthree\  (non-negative cross-curvature)
\begin{equation}\label{MTW}
\cross_{(x(0),y(0))} [x'(0),y'(0)] :=
-\frac{\partial^4}{\partial s^2 \partial t^2}\bigg|_{(s,t)=(0,0)} c(x(s),y(t)) \ge 0
\end{equation}
for every curve
$t \in[-1,1] \longmapsto \big(D_y c(x(t),y(0)),D_x c(x(0),y(t))\big) \in \R^{2n}$
which is an affinely parameterized line segment.\\

If the convex domains $\U_\ty$ and $\V_\tx$ in \Btwo\ are all
strongly convex, we say \Btwos\ holds. Here a convex set $\U
\subset \R^n$ is said to be {\em strongly} convex if there exists
a radius $R<+\infty$ (depending only on $U$,) such that each
boundary point $\tx \in
\partial \U$ can be touched from outside by a sphere of radius $R$
enclosing $\U$; i.e. $\U \subset B_R(\tx - R \normal_{\U}(\tx))$
where $\normal_\U(\tx)$ is an outer unit normal to a hyperplane
supporting $\U$ at $\tx$.  When $\U$ is smooth, this means all
principal curvatures of its boundary are bounded below by $1/R$.
Hereafter $\cl \U$ denotes the closure of $\U$, $\intr U$ denotes
its interior, $\diam U$ its diameter, and for any measure $\mu^+
\ge 0$ on $\cl \U$, we use the term {\em support} and the notation
$\supp \mu^+ \subset \cl\U$ to refer to the smallest closed set
carrying the full mass of $\mu^+$.

Condition \Bthree\  is the above-mentioned strengthening of Trudinger and Wang's
criterion {\bf (A3w)} guaranteeing smoothness
of optimal maps in the Monge transportation problem (\ref{Monge});  unlike us,
they require (\ref{MTW}) only if, in addition \cite{TrudingerWang07p},
\begin{equation}\label{nullity}
\frac{\partial^2}{\partial s \partial t}\bigg|_{(s,t)=(0,0)} c(x(s),y(t)) = 0.
\end{equation}
Necessity of Trudinger and Wang's condition for continuity was shown by
Loeper \cite{Loeper07p}, who
noted its covariance (as did \cite{KimMcCann07p} \cite{Trudinger06})
and some relations to curvature.  Their condition relaxes the hypothesis {\bf (A3)}
proposed earlier with Ma \cite{MaTrudingerWang05},
which required strict positivity of (\ref{MTW}) when (\ref{nullity}) holds.
The strengthening considered here was first studied
in a different but equivalent form by Kim and McCann \cite{KimMcCann07p},
where both the original and the modified
conditions were shown to
correspond to pseudo-Riemannian sectional curvature conditions induced by the cost $c$
on $\U \times \V$,  highlighting their invariance under reparametrization
of either $\U$ or $\V$ by diffeomorphism; see \cite[Lemma 4.5]{KimMcCann07p}.
The convexity of $\U_\ty$ required in \Btwo\  is called {\em
$c$-convexity of $U$ with respect to $\ty$} by Ma, Trudinger and
Wang (or strong $c$-convexity if \Btwos\ holds); they call curves
$x(s) \in \U$, for which $s \in [0,1] \longmapsto -D_y
c(x(s),\ty)$ is a line segment, {\em $c$-segments with respect to
$\ty$}. Similarly, $\V$ is said to be strongly $\cs$-convex with
respect to $\tx$
--- or with respect to $\cl\U$ when it holds for all $\tx \in \cl\U$ ---
and the curve $y(t)$ from \eqref{MTW} is said to be a
$\cs$-segment with respect to $\tx$.
Such curves correspond to geodesics $(x(t),\ty)$ and $(\tx, y(t))$
in the geometry of Kim and McCann. Here and throughout,  {\em line segments}
are always presumed to be affinely parameterized.

We are now in a position to summarize our main result:

\begin{theorem}[Interior continuity and injectivity of optimal maps]
\label{T:Hoelder}
Let $c \in C^4 \big(\cl \U \times \cl \V\big)$ satisfy \Bzero--\Bthree\ and \Btwos.
Fix probability densities $f^+ \in L^1\big(\U\big)$ and $f^- \in L^1\big(\V\big)$
with $(f^+/f^-) \in L^\infty\big(\U \x \V\big)$ and set $d\mu^\pm
:=f^\pm d\Leb{n}$. If the ratio $(f^-/f^+) \in
L^\infty(\U' \x \V)$ for some open set $\U' \subset \U$, then
the minimum (\ref{Monge}) is attained by a
map $G:\cl \U \longmapsto \cl \V$ whose restriction to $\U'$ is
continuous and one-to-one.
\end{theorem}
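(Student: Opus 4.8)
The plan is to follow Caffarelli's renormalization strategy, adapted to the non-negatively cross-curved setting via the gauge freedom advertised in the introduction. First I would invoke the standard existence theory (Gangbo--Levin, as cited) to produce the optimal map $G$ together with a $c$-convex potential $\uu$ on $\cl\U$ satisfying $D\uu(\tx)=-D_xc(\tx,G(\tx))$ at points of differentiability, so that $G(\tx)$ lies in the $c$-subdifferential $\partial^c\uu(\tx)$; the real content is all about continuity and injectivity of this $G$ on $\U'$. The reduction to an interior statement is where Theorem~\ref{thm:bdry-inter} enters: under \Btwos\ it guarantees that interior points are not sent to boundary points and vice versa, so near any $\tx\in\U'$ the image $G(\tx)$ sits in $\intr\V$ and we may work entirely with the smooth, strongly bi-convex, non-negatively cross-curved part of the cost. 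Then, fixing such an $\tx$ and $\ty=G(\tx)$, I would pass to the Kim--McCann-covariant coordinates: replace $c(x,y)$ by $c(x,y)-c(x,\ty)-c(\tx,y)$ (a null Lagrangian, harmless for the variational problem) and reparametrize by the diffeomorphisms $x\mapsto -D_yc(x,\ty)$ and $y\mapsto -D_xc(\tx,y)$ from \Bone. In these coordinates \Btwo\ says both target and source domains are convex, and \Bthree\ (non-negative cross-curvature, not merely \textbf{(A3w)}) is exactly what makes the modified potential $\uu$ \emph{genuinely convex} — not just having convex sections — which is the structural property Caffarelli's machinery needs.

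With convexity of $\uu$ restored, the scheme is the familiar one: the $c$-subdifferential $\partial^c\uu(\tx)$ behaves like the subdifferential of a convex function, its sections $\{\uu\le \ell\}$ and the "$c$-sections" cut out by $c$-supporting functions are convex bodies, and one runs the John-lemma normalization so that a section is comparable to a ball. The crux — and this is what Section~\ref{S:Alexandrov} is devoted to, so I would quote it — is an Alexandrov-type estimate: the measure $\mu^-(\partial^c\uu(S))$ of the image of a section $S$ is controlled above and below by appropriate powers of the "height" of the $c$-support and the volume of $S$, uniformly after normalization. This is strictly harder than in the affine case $c=-\langle x,y\rangle$ because the $c$-subdifferential map is nonlinear; non-negative cross-curvature is precisely the hypothesis that tames the relevant Monge--Ampère-type comparison. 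Given the two-sided density bounds $f^+/f^-\in L^\infty(\U\times\V)$ and $f^-/f^+\in L^\infty(\U'\times\V)$, the Alexandrov estimate converts into the dichotomy that drives Caffarelli's argument: either $G$ is continuous at $\tx$, or one can construct a section on which the prescribed-Jacobian relation $\mu^+(S)=\mu^-(\partial^c\uu(S))$ is violated. Iterating this across nested sections yields strict convexity of $\uu$ on $\U'$, hence differentiability of $\uu$ there and continuity of $G=\exp^c_x(-D\uu)$; injectivity then follows because two interior points with overlapping $c$-subdifferentials would force a line segment (in the good coordinates) on which $\uu$ is affine, contradicting the strict convexity just obtained.

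I expect the main obstacle to be establishing the Alexandrov-type estimate for $c$-subdifferentials with constants that survive the normalization — this is the technical heart, it has no analogue in the classical proof beyond the quadratic cost, and it is exactly where non-negative cross-curvature (rather than the weaker \textbf{(A3w)}) is indispensable; indeed the introduction flags that with only \textbf{(A3w)} one still gets convex sections but the authors "have not been successful at overcoming the remaining difficulties." A secondary but genuine difficulty is bookkeeping the domain issues: the cost is smooth and cross-curved only on $\cl\U\times\cl\V$, so throughout the renormalization one must check that the sections and $c$-segments produced stay inside the region where \Bzero--\Bthree\ apply; Theorem~\ref{thm:bdry-inter} is what licenses this, but its use has to be threaded carefully through each step. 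Finally one must verify that the coordinate change and the added null Lagrangian leave the class of admissible competitors — and hence the optimizer — unchanged, which is routine given \Bone\ but should be stated explicitly.
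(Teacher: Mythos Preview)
Your outline is essentially the paper's strategy --- existence of a $c$-convex potential, conversion of the density bounds into two-sided bounds on $|\partial^c u|$ via Lemma~\ref{L:cMA properties}(e), passage to cost-exponential coordinates where \Bthree\ renders $\tu$ genuinely convex (Theorem~\ref{thm:apparently convex}), Alexandrov-type estimates on sections (Section~\ref{S:Alexandrov}), and a Caffarelli-style conclusion --- so the overall approach is right.

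Two points of logical ordering differ from what you describe, and are worth correcting. First, the paper establishes injectivity (strict $c$-convexity, Theorem~\ref{T:strict convex}) \emph{before} and as an essential input to the $C^1$ result (Theorem~\ref{T:continuity}), not afterward; your final sentence derives injectivity from strict convexity ``just obtained,'' which is circular since strict $c$-convexity \emph{is} injectivity. Second, the strict-$c$-convexity argument is not an iteration over nested sections: it is a single contradiction. Theorem~\ref{thm:noexposed} shows that if the contact set $\partial^{c^*}u^{c^*}(\ty)$ is not a singleton it must reach $\partial U$ (by comparing the two Alexandrov estimates on one carefully constructed pair of sections $K_\e\subset K_\e^1$ near an exposed point and letting $\e\to0$), and then Theorem~\ref{thm:bdry-inter} rules that out. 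So Theorem~\ref{thm:bdry-inter} is used to close this dichotomy, not merely as a preliminary interior-localization device.
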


\begin{proof}
As recalled below in Section \ref{S:background}
(or see e.g.\ \cite{Villani09}) 
it is well-known by Kantorovich duality that the optimal joint
measure $\gamma \in \Gamma(\mu^+,\mu^-)$ from \eqref{Kantorovich}
vanishes outside the $c$-subdifferential \eqref{c-subdifferential}
of a potential $u=u^{c^* c}$ satisfying the $c$-convexity
hypothesis \eqref{c-transform}, and that the map $G:\cl\U
\longmapsto \cl\V$ which we seek is uniquely recovered from this
potential using the diffeomorphism \Bone\ to solve \eqref{implicit
G}. Thus the continuity claimed in Theorem \ref{T:Hoelder} is
equivalent to $\uu \in C^{1}(\U')$.

{Since $\mu^\pm$ do not charge the boundaries of $\U$ (or of
$\V$),} Lemma \ref{L:cMA properties}(e) shows the
$c$-Monge-Amp\`ere measure defined in \eqref{c-Monge-Ampere
measure} has density satisfying $|\p^c u| \le
\|f^+/f^-\|_{L^\infty(\U \times \V)}$ on $\cl \U$ and
$\|f^-/f^+\|^{-1}_{L^\infty(\U' \times \V)} \le |\p^c
u| \le \|f^+/f^-\|_{L^\infty(\U' \times \V)}$ on $\U'$. Thus $u
\in C^1(\U')$ according to Theorem \ref{T:continuity}. Injectivity
of $G$ follows from Theorem \ref{T:strict convex}, and the fact
that the graph of $G$ is contained in the set $\p^c u \subset \cl
\U \times \cl \V$ of \eqref{c-subdifferential}.
\end{proof}
{ Note that in case $f^+ \in C_c(U)$ is continuous and compactly supported,
 choosing $U' = U'_\e= \{ f^+ > \e \}$ for all $\e > 0$,  yields
 continuity and injectivity of the optimal map $y = G(x)$ throughout $U'_0$.}

Theorem \ref{T:Hoelder}
provides a necessary prerequisite for the higher interior
regularity results established by Liu, Trudinger and Wang in
\cite{LiuTrudingerWang09p} --- a prerequisite which one would
prefer to have under the weaker hypotheses \Bzero--\Btwo\ and
\Athreew. Note that these interior regularity results can be
applied to manifolds, after getting suitable stay-away-from-the-cut-locus
results: this is accomplished for multiple products of round spheres in
\cite{FigalliKimMcCann09p}, to yield the first regularity result that we
know for optimal maps on Riemannian manifolds which are not flat, yet
have some vanishing sectional curvatures.

\section{Background, notation, and preliminaries}
\label{S:background}

Kantorovich discerned \cite{Kantorovich42} \cite{Kantorovich48} that  Monge's problem
(\ref{Monge}) could be attacked by studying the linear programming problem
\begin{equation}\label{Kantorovich}
\min_{\gamma \in \Gamma(\mu^+,\mu^-)} \int_{\cl \U \times \cl \V}
c(x,y)\, d\gamma(x,y).
\end{equation}
Here $\Gamma(\mu^+,\mu^-)$ consists of the joint probability measures on
$\cl \U \times \cl \V \subset \R^n \times \R^n$ having $\mu^\pm$ for
marginals.  According to the duality theorem from linear programming,
the optimizing measures $\gamma$ vanish outside the zero set of
$\uu(x) + \vv(y) + c(x,y) \ge 0$ for some pair of functions
$(\uu,\vv) = (\vv^{c},\uu^\cs)$ satisfying
\begin{equation}\label{c-transform}
\vv^{c}(x) := \sup_{y \in \cl \V} -c(x,y) - \vv(y), \qquad
\uu^\cs(y) := \sup_{x \in \cl \U} -c(x,y) - \uu(x);
\end{equation}
these arise as optimizers of the dual program. This zero set is
called the $c$-subdifferential of $\uu$, and denoted by
\begin{equation}\label{c-subdifferential}
\partial^c \uu = \{(x,y) \in \cl \U \times \cl \V \mid \uu(x) + \uu^\cs(y) + c(x,y) = 0\};
\end{equation}
we also write $\partial^c \uu(x) := \{y \mid (x,y) \in \partial^c
\uu\}$, and $\partial^{\cs}\uu^\cs(y) := \{ x \mid (x,y) \in
\partial^c \uu\}$, and $\partial^c \uu(X) := \cup_{x \in
X} \partial^c \uu(x)$ for $X \subset \R^n$. Formula
(\ref{c-transform}) defines a generalized Legendre-Fenchel
transform called the {\em $c$-transform};  any function satisfying
$\uu = \uu^{\cs c} :=(\uu^\cs)^{c}$ is said to be {\em
$c$-convex}, which reduces to ordinary convexity in the case of
the cost $c(x,y) = - \<x, y\>$. In that case $\partial^c \uu$
reduces to the ordinary subdifferential $\partial \uu$ of the
convex function $\uu$,  but more generally we define
\begin{equation}\label{subdifferential}
\partial \uu
:= \{(x,p) \in \cl \U \times \R^n \mid \uu(\tx) \ge \uu(x) +
\langle p, \tx-x\rangle + o(|\tx-x|) {\rm\ as\ } \tx \to x\},
\end{equation}
$\partial\uu(x) := \{p \mid (x,p) \in \partial \uu\}$, and
$\partial \uu(X) := \cup_{x \in X} \partial \uu(x)$. Assuming $c
\in C^2\big(\cl \U \times \cl \V\big)$ (which is the case if
\Bzero\ holds),  any $c$-convex function $\uu=\uu^{c^*c}$ will be
semi-convex,  meaning its Hessian admits a bound from below $D^2
\uu \ge - \|c\|_{C^2}$ in the distributional sense;  equivalently,
$\uu(x) + \|c\|_{C^2}|x|^2/2$ is convex on each ball in $\U$
\cite{GangboMcCann96}.  In particular,  $\uu$ will be
twice-differentiable $\Leb{n}$-a.e. on $\U$ in the sense of
Alexandrov.

As in \cite{Gangbo95} \cite{Levin99} \cite{MaTrudingerWang05},
hypothesis \Bone\  shows the map $G:\dom D\uu \longmapsto \cl \V$ is uniquely
defined on the set $\dom D\uu \subset \cl\U$ of differentiability for $\uu$ by
\begin{equation}\label{implicit G}
D_x c(\tx,G(\tx)) = - D\uu(\tx).
\end{equation}
The graph of $G$,  so-defined,  lies in $\partial^c \uu$.
The task at hand is to show continuity and injectivity of $G$ ---
the former being equivalent to $\uu \in C^{1}(\U)$ ---
by studying the relation $\partial^c \uu \subset \cl \U \times \cl \V$.

To this end,  we define a Borel measure $|\partial^c \uu|$ on $\R^n$ associated to
$\uu$ by
\begin{equation}\label{c-Monge-Ampere measure}
|\partial^c \uu|(X) := \Leb{n}(\partial^c \uu(X))
\end{equation}
for each $X \subset \R^n$;
it will be called the  {\em $c$-Monge-Amp\`ere measure} of $\uu$.
{(Similarly, we define $|\partial u|$.)}
We use the notation $|\partial^c \uu| \ge  \lambda$ on $\U'$ as a shorthand
to indicate $|\partial^c \uu|(X) \ge \lambda \Leb{n}(X)$ for each $X \subset \U'$;
similarly, $|\partial^c \uu| \le \Lambda$ indicates
$|\partial^c \uu|(X) \le \Lambda \Leb{n}(X)$.
As the next lemma shows, uniform bounds above and below on the marginal densities
of a probability measure $\gamma$ vanishing outside $\partial^c \uu$
imply similar bounds on $|\partial ^c \uu|$.

\begin{lemma}[Properties of $c$-Monge-Amp\`ere measures]\label{L:cMA properties}
Let $c$ satisfy \Bzero-\Bone, while $u$ and $u_k$ denote
$c$-convex functions for each $k \in \N$. Fix $\tx \in \cl X$ and
constants $\lambda,\Lambda > 0$.
\\(a) Then
$\p^c u(\cl \U)\subset \cl \V$ and $|\partial^c u|$ is a
Borel measure of total mass $\Leb{n}\big(\cl V\big)$ on $\cl U$.
\\(b) If $u_k \to u_\infty$ uniformly,  then $u_\infty$ is $c$-convex and
$|\p^c u_k| \rightharpoonup |\p^c u_\infty|$ weakly-$*$ in
the duality against continuous functions on $\cl \U \times \cl \V$.
\\(c) If $u_k(\tx)=0$ for all $k$, then the functions $u_k$ converge uniformly if and only if
the measures $|\p^c u_k|$ converge weakly-$*$. 
\\ (d) If $|\p^c u| \le \Lambda$ on $\cl \U$, then
$|\partial^\cs u^\cs| \ge 1/\Lambda$ on $\cl \V$.
\\ (e) If a probability measure $\gamma \ge 0$
vanishes outside $\p^c \uu \subset \cl\U \times \cl\V$,
and has marginal densities $f^\pm$, 
then $f^+ \ge \lambda$ on $\U' \subset \cl \U$ and
$f^- \le \Lambda$ on $\cl \V$ imply $|\p^c \uu| \ge \lambda/\Lambda$ on $\U'$,
whereas $f^+ \le \Lambda$ on $\U'$ and $f^- \ge \lambda$ on $\cl \V$ imply
$|\p^c \uu| \le \Lambda / \lambda$ on $\U'$.
\end{lemma}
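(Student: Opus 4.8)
\textbf{Proof plan for Lemma~\ref{L:cMA properties}.}

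The plan is to prove the parts in the order (a), (d), (b), (c), (e), since (e) will be deduced by combining (a), (d) with a change-of-variables argument, and (c) will use (b). For part (a), I would first note that $\p^c u(\cl\U) \subset \cl\V$ is immediate from the definition \eqref{c-subdifferential}: any $y$ with $(x,y) \in \p^c u$ lies in $\cl\V$ since the supremum in \eqref{c-transform} defining $u^{\cs}$ is taken over $\cl\U$, forcing $u(x) + u^\cs(y) + c(x,y) \ge 0$ with equality realized only at admissible $y \in \cl\V$. To see $|\p^c u|$ is a Borel measure, the key point is that $\p^c u(X)$ is $\Leb n$-measurable for Borel $X$ (in fact one shows $\p^c u$ is a closed subset of $\cl\U\times\cl\V$ by semicontinuity of $u$ and $u^\cs$, so $\p^c u(X)$ is analytic/measurable), and countable additivity on disjoint sets follows because the overlap $\p^c u(x) \cap \p^c u(x')$ for $x \neq x'$ is $\Leb n$-null --- this is the standard fact that the set of $y$ admitting more than one $c$-subdifferential preimage is null, which follows from semiconvexity of $u^\cs$ and Alexandrov's theorem applied to $u^\cs$ (using \Bone\ to pass between $y$ and $D_y c$). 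The total mass statement $|\p^c u|(\cl\U) = \Leb n(\cl\V)$ then reduces to showing $\p^c u(\cl\U) = \cl\V$ up to an $\Leb n$-null set, i.e.\ that $\cs$-a.e.\ $y \in \cl\V$ is in the $\cs$-subdifferential of $u^\cs$ at some $x$; this holds because $u^\cs$ is differentiable $\Leb n$-a.e.\ on $\V$ and at such points of differentiability the map $G^\cs$ dual to \eqref{implicit G} produces a valid $x \in \cl\U$.

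For part (d), the observation is that $(x,y) \in \p^c u$ is a symmetric relation: it equals $\{(x,y) : u(x) + u^\cs(y) + c(x,y) = 0\}$, and since $u = u^{\cs c}$ we have $\p^\cs u^\cs(y) = \{x : (x,y)\in\p^c u\}$, so $\p^\cs u^\cs = (\p^c u)^{-1}$ as relations. Therefore $\p^\cs u^\cs(\p^c u(X)) \supset X$ for every $X$, and applying part (a) (with roles of $\U,\V$ and $c,\cs$ swapped) to $u^\cs$: if $|\p^c u| \le \Lambda$ on $\cl\U$ then for any $Y \subset \cl\V$, setting $X = \p^\cs u^\cs(Y)$ we get $\Leb n(Y) \le \Leb n(\p^c u(X)) = |\p^c u|(X) \le \Lambda \Leb n(X) = \Lambda |\p^\cs u^\cs|(Y)$, i.e.\ $|\p^\cs u^\cs| \ge 1/\Lambda$ on $\cl\V$. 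Some care is needed that $\p^\cs u^\cs(Y)$ is measurable and that the near-injectivity null-set adjustments from (a) do not spoil the inequality, but these are routine given (a).

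For part (b), uniform convergence $u_k \to u_\infty$ gives $c$-convexity of the limit by the standard stability of $c$-transforms (the sup in \eqref{c-transform} is continuous under uniform convergence on the compact sets $\cl\U,\cl\V$). Weak-$*$ convergence $|\p^c u_k| \rightharpoonup |\p^c u_\infty|$ is then obtained by showing that for any subsequential weak-$*$ limit $\nu$ of the uniformly bounded (total mass $\Leb n(\cl\V)$) measures $|\p^c u_k|$, one has $\nu = |\p^c u_\infty|$; this is done by testing against indicator functions of open and closed sets and using that $\limsup_k \p^c u_k(K) \subset \p^c u_\infty(K)$ for compact $K$ (upper semicontinuity of the $c$-subdifferential under uniform convergence, a consequence of \Bone) together with the matching lower bound on open sets. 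Part (c): the ``only if'' direction is (b); the ``if'' direction uses the normalization $u_k(\tx) = 0$ plus the uniform semiconvexity bound $D^2 u_k \ge -\|c\|_{C^2}$ (from \Bzero), which gives local equi-Lipschitz bounds, hence by Arzel\`a--Ascoli precompactness in $C(\cl\U)$; any two uniform subsequential limits have the same $c$-Monge-Amp\`ere measure by (b), and a $c$-convex function normalized at $\tx$ is determined by its $c$-Monge-Amp\`ere measure (this last uniqueness is the one genuinely nontrivial input, following from comparison/maximum-principle arguments for $c$-convex functions, e.g.\ as in the Aleksandrov theory; alternatively cite the corresponding statement in \cite{Villani09} or \cite{TrudingerWang08p}).

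Finally for part (e): since $\gamma$ vanishes outside $\p^c u$ and has marginals with densities $f^\pm$, for any Borel $X \subset \U'$ the mass $\gamma(X \times \cl\V) = \int_X f^+ \, d\Leb n$ equals $\gamma\big((X\times\cl\V)\cap \p^c u\big) \le \gamma\big(\cl\U \times \p^c u(X)\big) = \int_{\p^c u(X)} f^- \, d\Leb n$. Thus $\lambda \Leb n(X) \le \int_X f^+ \le \int_{\p^c u(X)} f^- \le \Lambda \Leb n(\p^c u(X)) = \Lambda |\p^c u|(X)$, giving $|\p^c u| \ge \lambda/\Lambda$ on $\U'$. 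The reverse inequality is symmetric: $\int_{\p^c u(X)} f^- = \gamma(\cl\U \times \p^c u(X)) \ge \gamma\big((X \times \cl\V)\cap\p^c u\big) = \int_X f^+ \ge \lambda \Leb n(X)$ --- wait, that is the wrong direction, so instead one argues that $\gamma$ restricted to $(\cl\U\setminus X')\times \p^c u(X)$ has small mass; more cleanly, using the near-injectivity from (a) one shows $\gamma(\cl\U\times \p^c u(X)) \le \gamma(X'' \times \cl\V)$ for $X''$ a set with $\Leb n(X'' \setminus X)=0$ when the subdifferential multifunction is a.e.\ single-valued and invertible, so $\lambda \Leb n(\p^c u(X)) \le \int_{\p^c u(X)} f^- \le \int_{X} f^+ \le \Lambda\Leb n(X)$, whence $|\p^c u|(X) = \Leb n(\p^c u(X)) \le (\Lambda/\lambda)\Leb n(X)$.

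\textbf{Main obstacle.} The delicate point throughout is the measurability of $\p^c u(X)$ and the handling of the $\Leb n$-null set of $y$'s lying over multiple $x$'s (equivalently, over which $u^\cs$ fails to be differentiable); once part (a) is set up carefully so that $|\p^c u|$ is genuinely a Borel measure with the stated total mass, parts (d) and (e) are short measure-theoretic consequences, and (b)--(c) reduce to standard stability and uniqueness facts for $c$-convex functions. I expect establishing (a) rigorously --- in particular the additivity/total-mass bookkeeping via Alexandrov differentiability of $u^\cs$ and the bi-twist condition \Bone --- to be the part requiring the most care.
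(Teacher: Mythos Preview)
Your proposal is correct, and the core ingredients match the paper's: part~(a) hinges on Rademacher/Alexandrov differentiability of $u^\cs$ together with \Bone\ to produce an a.e.-defined inverse map $F:\cl\V \to \cl\U$ via $D_y c(F(y),y)=-Du^\cs(y)$; part~(c) is Arzel\`a--Ascoli plus a uniqueness statement for $c$-convex functions with prescribed $c$-Monge--Amp\`ere measure (the paper cites Loeper~\cite[Proposition~4.1]{Loeper07p} for this). The organization and some of the arguments genuinely differ, however. The paper packages~(a) as the pushforward identity $|\p^c u| = F_\#\big(\Leb n\lfloor_{\cl\V}\big)$ and then reuses this formula throughout: for~(b) it shows $Dv_k \to Dv_\infty$ a.e.\ (by uniform semiconvexity), hence $F_k \to F_\infty$ a.e., and concludes by dominated convergence; for~(e) it first establishes $\gamma = (F\times id)_\#\mu^-$ and then reads off the chain $\lambda|\p^c u|(X)=\lambda\Leb n(F^{-1}(X)) \le \int_{F^{-1}(X)}f^- = \int_X f^+ \le \Lambda\Leb n(X)$ and its reverse; and it derives~(d) \emph{from}~(e) by taking $\gamma=(F\times id)_\#\Leb n$. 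Your route---proving~(d) directly from the symmetry $\p^\cs u^\cs = (\p^c u)^{-1}$, and proving~(b) by upper-semicontinuity of $K\mapsto \p^c u(K)$ on compacts combined with total-mass conservation---is equally valid and perhaps closer to the classical Monge--Amp\`ere presentation (e.g.\ Guti\'errez~\cite{Gutierrez01}). The pushforward viewpoint buys a cleaner treatment of the second inequality in~(e), where your ``$X''$ with $\Leb n(X''\setminus X)=0$'' argument is the right idea but would benefit from being rewritten as the equality $\gamma(\cl\U\times\p^c u(X))=\gamma(X\times\cl\V)$ that follows once $\gamma$ is seen to live on the graph of $F$.
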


\begin{proof}
(a) The fact $\p^c u(\cl \U) \subset \cl\V$ is an immediate consequence of definition
(\ref{c-subdifferential}).
Since $c \in C^1(\cl \U \times \cl \V)$,
the $c$-transform $\vv=\uu^\cs:\cl V \longmapsto \R$ defined by (\ref{c-transform})
can be extended to a Lipschitz function on a neighbourhood of $\cl \V$,  hence
Rademacher's theorem asserts $\dom D \vv$ is a set of full Lebesgue measure in $\cl \V$.
Use \Bone\ to define the unique solution $F:\dom D\vv \longmapsto \cl \U$ to
$$
D_y c(F(\ty),\ty) = - D \vv(\ty).
$$
As in \cite{Gangbo95} \cite{Levin99}, the vanishing of 
$\uu(x) + \vv(y) + c(x,y) \ge 0$ implies $\partial^{\cs} \vv(\ty) = \{F(\ty)\}$,
at least for all points $\ty \in \dom D\vv$ where $\cl \V$ has Lebesgue density
greater than one half. For Borel $X \subset \R^n$,
this shows $\partial^c \uu(X)$ differs from the Borel set $F^{-1}(X)\cap \cl\V$ by
a $\Leb{n}$ negligible subset of $\cl \V$,  whence $|\partial^c \uu| = F_{\#}\big(\Leb{n} \lfloor_{\cl \V}\bigr)$
so claim (a) of the lemma is established.

(b) Let $\|u_k - u_\infty\|_{L^\infty(\cl U)} \to 0$. It is not
hard to deduce $c$-convexity of $u_\infty$, as in e.g.\ 
\cite{FigalliKimMcCann-econ09p}. Define $v_k = u_k^\cs$ and $F_k$
on $\dom D v_k \subset \cl\V$ as above, so that $|\p^c u_k| =
F_{k\#} \big(\Leb{n} \lfloor_{\cl \V}\bigr)$.  Moreover, $v_k \to
v_{\infty}$ in $L^{\infty}(V)$, where $v_{\infty}$ is the
$c^{*}$-dual to $u_{\infty}$. The uniform semiconvexity of $v_k$
(i.e. convexity of $v_k(y) + \frac{1}{2}\|c\|_{C^2}|y|^2$) ensures
pointwise convergence of $D v_k \to Dv_\infty$ $\Leb{n}$-a.e.\ on
$\cl \V$. From $D_y c(F_k(\ty),\ty) = - D \vv_k(\ty)$ we deduce
$F_k \to F_\infty$ $\Leb{n}$-a.e.\ on $\cl \V$. This is enough to
conclude $|\p^c u_k| \rightharpoonup |\p^c u_k|$,  by testing the
convergence against continuous functions and applying Lebesgue's
dominated convergence theorem.

(c) To prove the converse,  suppose $u_k$ is a sequence of
$c$-convex functions which vanish at $\tx$ and $|\p^c u_k|
\rightharpoonup \mu_\infty$ weakly-$*$.  Since the $u_k$ have
Lipschitz constants dominated by $\|c\|_{C^1}$ and $\cl U$ is
compact, any subsequence of the $u_k$ admits a convergent further
subsequence by the Ascoli-Arzel\`a Theorem. A priori,  the limit
$u_\infty$ might depend on the subsequences, but (b) guarantees
$|\p^c u_\infty| = \mu_\infty$,  after which \cite[Proposition
4.1]{Loeper07p} identifies $u_\infty$ uniquely in terms of
$\mu^+=\mu_\infty$ and $\mu^- = \Leb{n} \lfloor_{\cl \V}$, up to
an additive constant; this arbitrary additive constant is fixed by
the condition $u_\infty(\tx)=0$.  Thus the whole sequence $u_k$
converges uniformly.

(e) Now assume a finite measure $\gamma \ge 0$ vanishes outside
$\p^c \uu$ and has marginal densities $f^\pm$.
Then the second marginal $d\mu^- := f^- d\Leb{n}$ of $\gamma$ is
absolutely continuous with respect to Lebesgue and $\gamma$
vanishes outside the graph of $F:\overline \V \longmapsto \U$,
whence $\gamma = (F \times id)_\# \mu^-$ by e.g.\ \cite[Lemma
2.1]{AhmadKimMcCann09p}. (Here $id$ denotes the identity map,
restricted to the domain $\dom D\vv$ of definition of $F$.)
Recalling that $|\partial^c \uu| = F_{\#}\big(\Leb{n} \lfloor_{\cl
\V}\bigr)$ (see the proof of (a) above), for any Borel $X \subset
\U'$ we have
$$
\lambda|\p^c u|(X)
= \lambda\Leb{n}(F^{-1}(X))
\le \int_{F^{-1}(X)} f^-(y) d\Leb n (y)
= \int_X f^+(x) d\Leb n (x)
\le \Lambda \Leb{n}(X)
$$
whenever $\lambda \le f^-$ and $f^+ \le \Lambda$.  We can also
reverse the last four inequalities and interchange $\lambda$ with
$\Lambda$ to establish claim (e) of the lemma.

(d) The last point remaining follows from (e) by taking $\gamma =
(F \times id)_\#\Leb n$. Indeed
an upper bound $\lambda$ on $|\p^c u|=F_\# \Leb n$
throughout $\cl U$ and lower bound $1$ on $\Leb{n}$ translate into
a lower bound $1/\lambda$ on $|\p^\cs u^\cs|$,  since the
reflection $\gamma^*$ defined by $\gamma^*(Y \times X) := \gamma
(X \times Y)$ for each $X \times Y \subset U \times V$ vanishes
outside $\p^\cs u^\cs$ and has second marginal absolutely
continuous with respect to Lebesgue by the hypothesis $|\partial^c
u| \le \lambda$.
\end{proof}

\begin{remark}[Monge-Amp\`ere type equation]{\rm
Differentiating (\ref{implicit G}) formally with respect to $\tx$
and recalling  $|\det DG(\tx)| = f^+(\tx) /f^-(G(\tx))$  yields
the Monge-Amp\`ere type equation
\begin{equation}\label{Monge-Ampere type equation}
\frac{\det [D^2 \uu(\tx) + D^2_{xx} c(\tx,G(\tx))]}{ |\det D^2_{xy} c(\tx,G(\tx))|}
= \frac{f^+(\tx)}{f^-(G(\tx))}
\end{equation}
on $\U$, where $G(\tx)$ is given as a function of $\tx$ and $Du(\tx)$
by (\ref{implicit G}).
Degenerate ellipticity follows from the fact
that $y = G(x)$ produces equality in $\uu(x) + \uu^\cs(y) + c(x,y) \ge 0$.
A condition under which $c$-convex weak-$*$ solutions are known to
exist 
is given by
$$
\int_{\cl\U} f^+(x) d\Leb{n}(x) = \int_{\cl \V} f^-(y) d\Leb{n}(y). \\
$$
The boundary condition $\p^c \uu(\cl U) \subset \cl V$
which then guarantees $Du$ to be uniquely determined $f^+$-a.e.\ is built into
our definition of $c$-convexity. In fact, \cite[Proposition 4.1]{Loeper07p}
shows $u$ to be uniquely determined up to additive constant if either $f^+>0$ or
$f^->0$ $\Leb{n}$-a.e. on its connected domain, $\U$ or $\V$.}
\end{remark}

A key result we shall exploit several times is a maximum principle
first deduced from Trudinger and Wang's work
\cite{TrudingerWang07p}  by Loeper; see
\cite[Theorem 3.2]{Loeper07p}. A simple and direct proof, and also an extension can be found in
\cite[Theorem 4.10]{KimMcCann07p}, where the principle was also
called `double-mountain above sliding-mountain' (\DASM).  Other
proofs and extensions appear in \cite{TrudingerWang08p} \cite{TrudingerWang08q}
\cite{Villani09} \cite{LoeperVillani08p} \cite{FigalliRifford08p}:

\begin{theorem}[Loeper's maximum principle `\DASM']\label{T:DASM}
Assume \Bzero--\Btwo\ and \Athreew\ and fix $x,\tx \in \cl \U$.
If $t\in[0,1] \longmapsto -D_x c(\tx,y(t))$ is a line segment then
$f(t) := -c(x,y(t)) + c(\tx,y(t)) \le \max\{f(0),f(1)\}$ for all $t \in [0,1]$.
\end{theorem}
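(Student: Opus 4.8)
\textbf{Proof proposal for Theorem~\ref{T:DASM}.}

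The plan is to argue by contradiction, exploiting the $c$-convexity structure together with hypothesis \Athreew\ applied along the given $\cs$-segment. Suppose $f(t) = -c(x,y(t)) + c(\tx,y(t))$ attains its maximum over $[0,1]$ at some interior point $t_0 \in (0,1)$ with $f(t_0) > \max\{f(0),f(1)\}$. Since $c \in C^4$ by \Bzero, the function $f$ is $C^4$ in $t$, so at the interior maximum we have $f'(t_0) = 0$ and $f''(t_0) \le 0$. First I would rewrite $f$ in terms of the natural variable: by \Bone\ the map $y \mapsto -D_x c(\tx, y)$ is a diffeomorphism onto $\V_\tx$, and by hypothesis $q(t) := -D_x c(\tx, y(t))$ is an affinely parametrized line segment in the convex set $\V_\tx$. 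Thus it is natural to view everything as a function of $q$ and to use the $\cs$-exponential at $\tx$, i.e. $y(t) = \cs\text{-}\!\exp_{\tx}(q(t))$ in the notation suggested by the Kim--McCann geometry.

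The heart of the matter is to relate the sign of $f''(t_0)$ to the cross-curvature quantity from \eqref{MTW}. The key computation is to differentiate $f(t) = -c(x,y(t)) + c(\tx,y(t))$ twice in $t$ and compare with the analogous second derivative where $x$ is replaced by a point moving along a $c$-segment with respect to $y(t_0)$. Concretely, I would introduce the curve $s \mapsto x(s)$ defined so that $s \mapsto -D_y c(x(s), y(t_0))$ is the line segment from $-D_y c(\tx, y(t_0))$ to $-D_y c(x, y(t_0))$, with $x(0) = \tx$ and $x(1) = x$; \Btwo\ guarantees this segment stays in $\U_{y(t_0)}$ so the curve is well-defined via \Bone. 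Then consider the two-variable function $h(s,t) := -c(x(s), y(t)) + c(\tx, y(t))$. One has $h(0,t) \equiv 0$, $h(1,t) = f(t)$, and along $t = t_0$ the pair $\big(D_y c(x(s),y(t_0)), D_x c(x(0),y(t))\big)$ traces affinely parametrized segments in each variable — exactly the configuration in \Bthree. The non-negative cross-curvature inequality \eqref{MTW}, together with \Athreew\ (which requires \eqref{MTW} only when the mixed second derivative \eqref{nullity} vanishes), controls $\partial_t^2 h$ as a function of $s$: it forces $t \mapsto h(s,t)$ to be, in the appropriate sense, ``sliding below'' — its graph over $[0,1]$ in $t$ cannot bulge upward past its endpoint values faster than the $s=0$ slice does. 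Making this precise is the step that actually uses \Athreew\ rather than the stronger \Bthree, and follows the argument of \cite[Theorem 3.2]{Loeper07p} or \cite[Theorem 4.10]{KimMcCann07p}.

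I expect the main obstacle to be the bookkeeping at the interior maximum $t_0$: one must handle the possibility that the mixed derivative \eqref{nullity} does \emph{not} vanish at $(s, t_0)$ for intermediate $s$, in which case \Athreew\ gives no direct information and one instead needs the first-order condition $f'(t_0) = 0$ to kill the offending term. The cleanest route is to fix $t_0$ and study $s \mapsto \partial_t^2 h(s,t_0)$, showing it is non-negative wherever $\partial_t h(s,t_0)$ (equivalently $\partial_s \partial_t h$, via the segment structure) vanishes; an ODE comparison argument in $s$ then propagates the endpoint information $h(0,t) \equiv 0$ to the conclusion that $f(t_0) = h(1,t_0) \le \max\{f(0),f(1)\}$, contradicting the assumption. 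A secondary technical point is ensuring all the auxiliary curves remain inside $\cl\U \times \cl\V$ where $c$ is $C^4$ and the hypotheses hold — but this is exactly what the convexity statements in \Btwo\ are for, so it should not cause real trouble. Since a complete proof already exists in the cited references, I would keep this argument brief and refer to \cite[Theorem 3.2]{Loeper07p} and \cite[Theorem 4.10]{KimMcCann07p} for the details.
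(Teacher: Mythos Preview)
The paper does not prove Theorem~\ref{T:DASM} at all: it is stated as a known result due to Loeper (deduced from Trudinger--Wang), with citations to \cite[Theorem 3.2]{Loeper07p} and \cite[Theorem 4.10]{KimMcCann07p} for the proof. Your proposal correctly identifies this and, in its final sentence, arrives at exactly what the paper does --- cite the references rather than reproduce the argument.

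The sketch you give of the contradiction/ODE-comparison approach is broadly in the spirit of the Kim--McCann proof, though your bookkeeping is somewhat loose (for instance, the quantity $\partial_s\partial_t h$ is not quite the same as $\partial_t h$, and the precise mechanism by which \Athreew\ propagates information in $s$ needs the $s$-derivative of $\partial_t^2 h$ at points where $\partial_s\partial_t h = 0$, not merely the sign of $\partial_t^2 h$ itself). But since neither the paper nor your proposal intends to give a self-contained proof, there is nothing further to compare.
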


It is through this theorem and the next that hypothesis \Athreew\ and the
non-negative cross-curvature hypothesis \Bthree\
enter crucially. Among the many corollaries Loeper deduced from this result,
we shall need two. Proved in \cite[Theorem 3.1 and Proposition 4.4]{Loeper07p}
(alternately \cite[Theorem 3.1]{KimMcCann07p} and \cite[A.10]{KimMcCannAppendices}),
they include the $c$-convexity of the so-called {\em contact set}
(meaning the $c^*$-subdifferential at a point), and a local to global principle.

\begin{corollary}\label{C:local-global}
Assume \Bzero--\Btwo\ and \Athreew\ and fix $(\tx,\ty) \in \cl \U
\x \cl\V$. If $u$ is $c$-convex then $\p^c u(\tx)$ is $c^*$-convex
with respect to $\tx \in \U$, i.e.\ $-D_x c(\tx, \p^c u(\tx))$
forms a convex subset of $T^*_\tx \U$. Furthermore,  any local
minimum of the map $x\in\U \longmapsto u(x) + c(x,\ty)$ is a
global minimum.
\end{corollary}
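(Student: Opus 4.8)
The plan is to derive both assertions from Loeper's maximum principle (Theorem \ref{T:DASM}) together with the bi-twist and bi-convexity hypotheses \Bone--\Btwo, following the arguments of Loeper \cite{Loeper07p} and Kim--McCann \cite{KimMcCann07p}.

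For the first assertion, I would argue by contradiction. Suppose $y_0, y_1 \in \p^c u(\tx)$ but the $c^*$-segment $t \in [0,1] \longmapsto y(t)$ with respect to $\tx$ joining them (which exists and stays in $\cl\V$ by the $c^*$-convexity of $\V$ with respect to $\tx$ from \Btwo) contains a point $y(t_0)$ with $y(t_0) \notin \p^c u(\tx)$. By definition \eqref{c-subdifferential}, membership $y_i \in \p^c u(\tx)$ means $u(\tx) + u^{\cs}(y_i) + c(\tx, y_i) = 0$, while for all $y \in \cl\V$ one has $u(\tx) + u^{\cs}(y) + c(\tx,y) \geq 0$; equivalently, setting $\phi(y) := -u^{\cs}(y)$, we have $\phi(y) \leq u(\tx) + c(\tx,y)$ with equality at $y_0$ and $y_1$. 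Now pick $x \in \cl\U$ realizing $u^{\cs}(y(t_0)) = -c(x,y(t_0)) - u(x)$, i.e. $\phi(y(t_0)) = u(x) + c(x, y(t_0))$; the strict inequality $y(t_0)\notin \p^c u(\tx)$ gives $u(x) + c(x,y(t_0)) < u(\tx) + c(\tx, y(t_0))$. Apply Theorem \ref{T:DASM} with the roles of $x$ and $\tx$ as stated, to the $c^*$-segment $y(t)$: the function $f(t) = -c(x,y(t)) + c(\tx,y(t))$ satisfies $f(t_0) \le \max\{f(0), f(1)\}$. But $f(t_0) > u(x) - u(\tx)$ from the displayed strict inequality, whereas $f(0), f(1) \le u(x) - u(\tx)$ because $\phi(y_i) = u(x) + c(x,y_i) - (\text{something} \ge 0)$... here I must be careful with signs: at the endpoints $\phi(y_i) = u(\tx) + c(\tx,y_i)$, and $\phi(y_i) \ge u(x) + c(x, y_i)$ by the sup defining $u^\cs$, so $f(i) = c(\tx,y_i) - c(x,y_i) \le u(x) - u(\tx)$. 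Chaining these inequalities contradicts $f(t_0) \le \max\{f(0),f(1)\} \le u(x)-u(\tx) < f(t_0)$. Hence $\p^c u(\tx)$ contains the whole $c^*$-segment, i.e. $-D_x c(\tx, \p^c u(\tx))$ is convex.

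For the second assertion (local implies global for $x \mapsto u(x) + c(x,\ty)$), suppose $\bx$ is a local minimum of $h(x) := u(x) + c(x,\ty)$ but not a global one, so $h(x_*) < h(\bx)$ for some $x_*$. Consider the $c$-segment $x(s)$ with respect to $\ty$ joining $\bx = x(0)$ to $x_* = x(1)$, which exists in $\cl\U$ by the $c$-convexity of $\U$ with respect to $\ty$ in \Btwo\ (and \Bone\ to parametrize it). The idea is to apply the maximum principle to the function along this segment, playing $\ty$ against a well-chosen competitor: since $u$ is $c$-convex, pick $y_0 \in \p^c u(\bx)$, so $u(\bx) + u^\cs(y_0) + c(\bx, y_0) = 0$ and $u(x) + c(x,y_0) \ge -u^\cs(y_0) = u(\bx) + c(\bx,y_0)$ for all $x$. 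Now \DASM\ (in the form controlling $c$-segments, obtained by symmetry of \Bzero--\Btwo\ and \Athreew\ in the two variables) shows the function $s \mapsto -c(x(s), \ty) + c(x(s), y_0)$ attains no interior strict max; combining the sign of $h(x_*) - h(\bx) < 0$ with the inequality $u(x(s)) + c(x(s), y_0) \ge u(\bx) + c(\bx, y_0)$ forces $h$ to decrease immediately along the segment starting from $s=0$, contradicting that $\bx$ is a local minimum. (Alternatively, one can simply cite \cite[Theorem 3.1]{Loeper07p} or \cite[A.10]{KimMcCannAppendices} verbatim, since this is exactly their ``local to global'' principle.)

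The main obstacle is bookkeeping the signs and keeping straight which variable plays the role of the ``slid mountain'' in each application of Theorem \ref{T:DASM}; the statement of \DASM\ is stated for $c^*$-segments in $y$, so for the second assertion one needs the analogous statement with the roles of $\U$ and $\V$ (equivalently $c$ and $c^*$) interchanged, which is legitimate because \Bzero--\Btwo\ and \Athreew\ are symmetric under $(x,y,c) \leftrightarrow (y,x,c^*)$. Once the correct competitor point and the correct orientation of the segment are fixed, each step reduces to a one-line application of the inequality in Theorem \ref{T:DASM}. I expect no genuine analytic difficulty beyond this; the result is essentially a repackaging of Loeper's corollaries.
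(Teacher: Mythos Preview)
Your argument for the first assertion is essentially correct and follows Loeper's line, though note a sign slip: since $\phi(y)=-u^{\cs}(y)=\inf_{z}[u(z)+c(z,y)]$, the inequality at the endpoints is $\phi(y_i)\le u(x)+c(x,y_i)$, not $\ge$; with the correct sign your chain of inequalities closes exactly as you wrote. (A slightly cleaner, non-contradiction version: for $i=0,1$ the mountains $m_i(\cdot):=-c(\cdot,y_i)+c(\tx,y_i)+u(\tx)$ lie below $u$; by Theorem~\ref{T:DASM}, $-c(\cdot,y(t))+c(\tx,y(t))+u(\tx)\le\max\{m_0,m_1\}\le u$, so $y(t)\in\p^cu(\tx)$.)

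The second assertion, however, has a genuine gap. Along the $c$-segment $x(s)$ with respect to $\ty$, the symmetric \DASM\ controls $s\mapsto -c(x(s),y)+c(x(s),\ty)$ from above, so your function $s\mapsto -c(x(s),\ty)+c(x(s),y_0)$ satisfies a \emph{minimum} principle, not a maximum principle; and even with the correct sign, combining it with $u(x(s))+c(x(s),y_0)\ge u(\bx)+c(\bx,y_0)$ only produces a lower bound for $h(x(s))$, which cannot force $h$ to decrease near $s=0$. A single $y_0\in\p^cu(\bx)$ is not enough here.

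The argument in the references you cite proceeds differently and uses the first assertion in an essential way. One first shows $\p u(\bx)=-D_xc(\bx,\p^cu(\bx))$: the inclusion $\supset$ is immediate; for $\subset$, every extreme point of the compact convex set $\p u(\bx)$ is a limit of gradients $Du(x_k)=-D_xc(x_k,y_k)$ with $y_k\in\p^cu(x_k)$, hence lies in $-D_xc(\bx,\p^cu(\bx))$, and this last set is convex by the first assertion, so it contains all of $\p u(\bx)$. Now if $\bx$ is a local minimum of $x\mapsto u(x)+c(x,\ty)$, semiconvexity gives $-D_xc(\bx,\ty)\in\p u(\bx)=-D_xc(\bx,\p^cu(\bx))$; by the twist condition \Bone\ this forces $\ty\in\p^cu(\bx)$, which is exactly the global minimum statement. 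This is the missing idea in your sketch.
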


As shown in \cite[Corollary 2.11]{KimMcCann08p},
the strengthening \Bthree\ of hypothesis \Athreew\ improves
the conclusion of Loeper's maximum principle. 
This improvement asserts that the altitude  $f(t,x)$ at each point
of the evolving landscape then accelerates as a function of $t\in[0,1]$:

\begin{theorem}[{\bf Time-convex DASM}]\label{T:time-convex DASM}
Assume \Bzero--\Bthree\ and fix $x,\tx \in \cl \U$.
If $t\in[0,1] \longmapsto -D_xc(\tx,y(t))$ is a line segment then
$t \in [0,1] \longmapsto f(t) := -c(x,y(t)) + c(\tx,y(t))$ is convex.
\end{theorem}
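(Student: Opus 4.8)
The plan is to reduce the convexity-in-$t$ statement for $f(t) = -c(x,y(t)) + c(\tx,y(t))$ to the non-negative cross-curvature hypothesis \Bthree. Observe first that for $t$ fixed, $f(t)$ depends on the pair $(x,\tx)$ only through the cost $c$ evaluated at the two points $x$ and $\tx$ against the running point $y(t)$; since $y(t)$ is a $\cs$-segment with respect to $\tx$, by definition the curve $t \mapsto -D_x c(\tx,y(t))$ is an affinely parameterized line segment. So I would compute $f''(t)$ directly. Writing $g(s,t) := -c(x(s),y(t)) + c(\tx,y(t))$ along an auxiliary $c$-segment $s \mapsto x(s)$ with $x(0) = \tx$ and $x(1) = x$ (using \Bone--\Btwo\ to produce such a segment joining $\tx$ to $x$), one has $f(t) = g(1,t) - g(0,t)$ and $\partial_s g(0,t) $ involves $-D_x c(\tx, y(t))$; the point is that $f(t) = \int_0^1 \partial_s g(s,t)\,ds$, and hence $f''(t) = \int_0^1 \partial_s \partial_t^2 g(s,t)\,ds$.

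Next I would push one more derivative onto $s$: since the right endpoint contribution and left endpoint contribution combine, in fact $f''(t) = \int_0^1 \partial_t^2 \partial_s g(s,t)\,ds$, and I claim this rearranges to $\int_0^1 \int_0^s \partial_r^2 \partial_t^2 g(r,t)\,dr\,ds$ once one checks that $\partial_r^2\big|_{r=0}$ of the relevant quantity vanishes or is accounted for — more carefully, the cleanest route is to use the fundamental theorem of calculus twice in the variable along the $c$-segment, exploiting that $\partial_s g(s,t) = \partial_s\big(-c(x(s),y(t))\big)$ does not see the $\tx$-term. Then $-\partial_s^2\partial_t^2 c(x(s),y(t))$ is, by \eqref{MTW}, precisely $\cross_{(x(s),y(t))}[x'(s),y'(t)]$ provided that the curve $s \mapsto x(s)$ is a $c$-segment with respect to $y(t)$ and $t \mapsto y(t)$ is a $\cs$-segment with respect to $x(s)$ — i.e.\ that the pair $\big(D_y c(x(s),y(t)), D_x c(x(s),y(t))\big)$ traces an affine segment in each variable separately. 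This is exactly the compatibility that makes the double family of segments legitimate, and it is guaranteed by choosing $s \mapsto x(s)$ to be a $c$-segment with respect to $y(0)$ and invoking that $t \mapsto -D_x c(\tx, y(t))$ is affine; one needs the $\cs$-segment property to hold with respect to every $x(s)$, which is where \Btwo\ (bi-convexity) and the affine structure are used.

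The main obstacle I anticipate is precisely this last compatibility bookkeeping: verifying that when $s \mapsto x(s)$ is a $c$-segment with respect to $y(0)$ and $t \mapsto y(t)$ is a $\cs$-segment with respect to $\tx = x(0)$, the two-parameter curve $(s,t) \mapsto \big(D_y c(x(s),y(t)), D_x c(x(s),y(t))\big)$ is jointly of the form required by \Bthree\ — namely that for each fixed $s$ the $t$-curve $-D_x c(x(s),y(t))$ is still a line segment (not just for $s=0$), and for each fixed $t$ the $s$-curve $-D_y c(x(s),y(t))$ is still a line segment (not just for $t=0$). This is the content of \cite[Corollary 2.11]{KimMcCann08p} cited just above, and the honest proof either invokes that corollary directly or reproves the needed mixed-partial identity. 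Granting it, the chain is: $f''(t) = \int_0^1 \int_0^s \big(-\partial_r^2\partial_t^2 c(x(r),y(t))\big)\,dr\,ds = \int_0^1\int_0^s \cross_{(x(r),y(t))}[x'(r),y'(t)]\,dr\,ds \ge 0$ by \Bthree, so $f$ is convex on $[0,1]$.

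Finally I would note that the endpoint regularity needed — that $f \in C^2([0,1])$ so that nonnegativity of $f''$ in the distributional sense upgrades to genuine convexity — follows from \Bzero\ ($c \in C^4$) together with the smoothness of the segments $x(s)$, $y(t)$, which are themselves smooth because they are defined via the diffeomorphisms of \Bone\ applied to affine segments. A remark worth including: this argument does not require \Athreew\ as a separate hypothesis, since \Bthree\ implies \Athreew; and it recovers Theorem~\ref{T:DASM} as the weaker corollary that a convex function on $[0,1]$ lies below the chord, hence below $\max\{f(0),f(1)\}$.
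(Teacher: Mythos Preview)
Your overall strategy --- writing $f''(t)$ as a double integral of $-\partial_r^2\partial_t^2 c(x(r),y(t))$ via two applications of the fundamental theorem of calculus, using that the boundary term at $s=0$ vanishes because $t\mapsto -D_xc(\tx,y(t))$ is affine --- is sound, and is essentially how the result is proved in \cite{KimMcCann08p}.  (The present paper does not give its own proof; it simply cites \cite[Corollary 2.11]{KimMcCann08p}.)

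There is, however, a genuine gap at precisely the step you flag as the main obstacle.  The ``compatibility'' you hope for --- that a $c$-segment $x(s)$ with respect to $y(0)$ remains a $c$-segment with respect to every $y(t)$, and symmetrically for $y(\cdot)$ --- is simply \emph{false} for generic costs satisfying \Bzero--\Bthree; it holds only for very special costs such as $-\langle x,y\rangle$.  And invoking \cite[Corollary 2.11]{KimMcCann08p} to resolve this is circular, since that corollary \emph{is} the time-convex \DASM\ statement you are proving.

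The fix is not to try to verify both segment conditions at every $(r,t)$, but to observe you only need one of them.  Freeze $t_0\in[0,1]$ and choose the $c$-segment $s\mapsto x_{t_0}(s)$ from $\tx$ to $x$ \emph{with respect to $y(t_0)$} (a fresh segment for each $t_0$).  Your double-integral identity still gives
\[
f''(t_0)=\int_0^1\!\!\int_0^s \Bigl(-\partial_r^2\partial_t^2 c\bigl(x_{t_0}(r),y(t)\bigr)\Bigr)\Big|_{t=t_0}\,dr\,ds,
\]
with the same boundary vanishing at $s=0$.  Now expand the integrand: the $c$-segment equation $D^2_{xy}c(x_{t_0}(r),y(t_0))[\ddot x,\cdot]=-D^3_{xxy}c(x_{t_0}(r),y(t_0))[\dot x,\dot x,\cdot]$ holds at this $y(t_0)$ for every $r$, and substituting it makes the two terms containing $\ddot y$ cancel identically.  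What remains is exactly the MTW tensor $\cross_{(x_{t_0}(r),y(t_0))}[\dot x,\dot y]$, \emph{regardless} of whether $y(\cdot)$ is a $\cs$-segment with respect to $x_{t_0}(r)$.  Hypothesis \Bthree\ then yields $f''(t_0)\ge 0$ for each $t_0$, hence convexity of $f$.
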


\begin{remark}\label{rmk:symmetry}{\rm
Since all assumptions \Bzero--\Bthree\ and \Athreew\ on the cost
are symmetric in $x$ and $y$,
all the results above still hold when exchanging $x$ with $y$.}
\end{remark}

\section{Cost-exponential coordinates, null Lagrangians, and affine renormalization}\label{S:notation}
In this section, we set up the notation for the rest of the paper.
%
Recall that $c \in C^4(\cl\U \times \cl \V)$ is a non-negatively
cross-curved cost function satisfying \Bone--\Bthree\ on a pair of
bounded domains $\U$ and $\V$ which are strongly $c$-convex with
respect to each other \Btwos.

Fix $\lambda,\Lambda >0$ and an open domain $\U^\lambda \subset
\U$, and let $\uu$ be a $c$-convex solution of the
$c$-Monge-Amp\`ere equation
\begin{equation}
\label{eq:cMA}
\left\{ \begin{array}{ll}
\l \Leb{n}
\leq
|\p^c\uu| \leq \frac{1}{\l}\Leb{n} 
&\text{in } \U^\lambda \subset \U,
\\ |\p^c\uu| \le \Lambda \Leb{n} &\text{in } \cl\U.
\end{array}
\right.
\end{equation}
We sometimes abbreviate \eqref{eq:cMA} by writing $|\p^c \uu| \in
[\lambda, 1/\lambda]$. In the following sections,  we will prove
interior differentiability of $\uu$ on $\U^\lambda$, that is $\uu
\in C^{1}(\U^\lambda)$; see Theorem \ref{T:continuity}.

Throughout $D_y$ will denote the derivative with respect to the variable $y$,
and iterated subscripts as in $D^2_{xy}$ denote iterated derivatives.
We also use
\begin{eqnarray}\label{bi-Lipschitzbound}
\beta^\pm_c &=&\beta^\pm_c(U \times V) := \ \ \| (D^2_{xy} c)^{\pm 1}\|_{L^\infty(U \times V)}\\
\gamma^\pm_c &=& \gamma^\pm_c(U \times V) := \|\det (D^2_{xy} c)^{\pm 1}\|_{L^\infty(U \times V)}
\label{Jacobian bound}
\end{eqnarray}
to denote the bi-Lipschitz constants $\beta^\pm_c$ of the
coordinate changes (\ref{q of x}) and the Jacobian bounds
$\gamma^\pm_c$ for the same transformation. Notice $\gamma^+_c
\gamma^-_c \ge 1$ for any cost satisfying \Bone, and equality
holds whenever the cost function $c(x,y)$ is quadratic. So the
parameter $\gamma^+_c\gamma^-_c$ crudely quantifies the departure
from the quadratic case. The inequality $\beta^+_c \beta^-_c \ge
1$ is much more rigid,  equality implying $D^2_{xy} c(x,y)$ is the
identity matrix, and not merely constant.

\subsection{Choosing coordinates which convexify $c$-convex functions}\label{S:transform}

In the current subsection, we introduce an important transformation (mixing
dependent and independent variables)
for the cost $c(x,y)$ and potential $\uu(x)$, which plays a crucial role in the subsequent
analysis. This change of variables and its most relevant properties are encapsulated in the
following definition and theorem.
{\em In the sequel, whenever we use the expression  $\tilde c(q, \cdot)$ or $\tilde u(q)$
we refer to the modified cost function and convex potential defined here, unless otherwise stated.}
Since properties \Bzero--\Bthree, \Athreew\ and \Btwos\ were shown to be tensorial in nature
(i.e.\ coordinate independent) in \cite{KimMcCann07p} \cite{Loeper07p}, the modified cost $\tilde c$ inherits these
properties from the original cost $c$ with one exception:  (\ref{x of q})
defines a $C^3$ diffeomorphism $q \in \cl \U_{\ty}  \longmapsto x(q) \in \cl \U $, so
the cost $\tilde c \in C^3(\cl \U_{\ty} \times \cl \V)$ may not be $C^4$ smooth.
However, its definition reveals that
we may still differentiate $\tilde c$ four times as long as no more than three of the
four derivatives fall on the variable $q$,  and it leads
to the same geometrical structure (pseudo-Riemannian curvatures, including (\ref{MTW}))
as the original cost $c$ since the metric tensor and symplectic form defined
in \cite{KimMcCann07p} involve only mixed derivatives $D^2_{qy} \tilde c$,  and
therefore remain $C^2$ functions of the coordinates $(q,y) \in \cl \U_\ty \times \cl \V$.

\begin{definition}[Cost-exponential coordinates and apparent properties]
\label{D:cost exponential}
Given $c \in C^4\big(\cl\U \times \cl\V\big)$ strongly twisted \Bzero--\Bone, we refer to
the coordinates $(q,p) \in \cl \U_{\ty} \times \cl \V_{\tx}$ defined by
\begin{equation}\label{q of x}
q = q(x) = - D_y c(x,\ty), \qquad p = p(y) = -D_x c(\tx,y),
\end{equation}
as the \emph{cost exponential coordinates} from $\ty \in \cl \V$ and $\tx \in \cl\U$ respectively.
We denote the inverse diffeomorphisms by
$x:\cl \U_{\ty} \subset T^*_{\ty} \V\longmapsto \cl\U$ and
$y:\cl \V_{\tx} \subset T^*_{\tx} \U \longmapsto \cl\V$; they satisfy
\begin{equation}\label{x of q}
q = - D_y c(x(q),\ty), \qquad p = -D_x c(\tx,y(p)).
\end{equation}
The cost $\tilde c(q,y) = c(x(q),y) - c( x(q),\ty)$ is called the
\emph{modified cost at $\ty$}. A subset of $\cl \U$ or function
thereon is said to \emph{appear from $\ty$} to have
property $A$, 
if it has property $A$ when expressed in the coordinates $q \in
\cl \U_{\ty}$.
\end{definition}

\begin{remark}{\rm
Identifying the cotangent vector $0 \oplus q$
with the tangent vector $Q \oplus 0$ to $\U \times \V$ using
the pseudo-metric of Kim and McCann \cite{KimMcCann07p}
shows $x(q)$ %
to be the projection to $\U$ of the pseudo-Riemannian
exponential map $\exp_{(\tx,\ty)} Q \oplus 0$;  similarly
$y(p)$
is the projection to $\V$ of $\exp_{(\tx,\ty)} 0 \oplus P$.
Also, $x(q) =: \cs$-$\exp_{\ty}q$ and $y(p) =: c$-$\exp_{\tx} p$
in the notation of Loeper \cite{Loeper07p}.}
\end{remark}

Our first contribution is the following theorem.
For a non-negatively cross-curved cost \Bthree,  it shows that
any $\tilde c$-convex potential appears convex from $\ty \in \cl \V$.
Even if the cost function is weakly regular \Athreew,
the level sets of the $\tilde c$-convex potential appear convex from $\ty$,
as was discovered independently from us by Liu \cite{Liu09}, and
exploited by Liu with Trudinger and Wang \cite{LiuTrudingerWang09p}.
Note that although the difference between the cost $c(x,y)$ and the modified cost
$\tilde c(q,y)$ depends on $\ty$,  they differ by a null Lagrangian $c(x,\ty)$
which --- being independent of $y \in \V$ --- does not affect the question
of which maps $G$ attain the infimum (\ref{Monge}).
Having a function with convex level sets is a useful starting point,
since it enables us to apply Caffarelli's affine renormalization of convex sets
approach and a full range of techniques from Gutierrez \cite{Gutierrez01}
to address the regularity of $c$-convex potentials.

\begin{theorem}[Modified $c$-convex functions appear convex]\label{thm:apparently convex}
Let $c \in C^4 \big(\cl \U \times \cl\V\big)$ satisfying
\Bzero--\Btwo\ be weakly regular \Athreew.
If $\uu = \uu^{c^* c}$ is $c$-convex on $\cl \U$,  then
$\tu(q) = \uu(x(q)) + c(x(q),\ty)$ has convex level sets,
as a function of the cost exponential coordinates
$q \in \cl \U_{\ty}$ from $\ty \in \cl \V$.
If, in addition, $c$ is non-negatively cross-curved \Bthree\ then $\tu$ is convex on
$\cl \U_{\ty}$.
In either case $\tu$ is minimized at $q_0$ if $\ty\in \p^c \uu(x(q_0))$.
Furthermore, $\tu$ is $\tilde c$-convex with respect to the modified cost
$\tilde c(q,y) := c(x(q),y) - c(x(q),\ty)$ on
$\cl \U_{\ty} \times \cl \V$, and
$\p^{\tilde c} \tu(q) = \p^c \uu(x(q))$ for all $q \in \cl \U_\ty$.
\end{theorem}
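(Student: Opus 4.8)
The plan is to reduce every claim to the single geometric fact provided by Theorem~\ref{T:time-convex DASM} (or, under the weaker hypothesis \Athreew, Theorem~\ref{T:DASM}), transported into the cost-exponential coordinates $q \in \cl\U_\ty$. First I would record the trivial algebraic identity: for any fixed $y \in \cl\V$,
$$
\tu(q) + c(x(q),y) - c(x(q),\ty) = \uu(x(q)) + \tilde c(q,y),
$$
so that $\tu(q) + \tilde c(q,y) = \uu(x(q)) + c(x(q),y)$; this means that the function $q \mapsto \tu(q) + \tilde c(q,y)$ is simply the $x$-pullback of $x \mapsto \uu(x) + c(x,y)$ under the $C^3$ diffeomorphism $x(\cdot)$. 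Since $\uu = \uu^{c^*c}$, for each $y$ the quantity $\uu(x) + c(x,y) \ge -\uu^{c^*}(y)$ with equality exactly on $\p^{c^*}\uu^{c^*}(y)$; pulling back, $\tu(q) + \tilde c(q,y) \ge -\uu^{c^*}(y)$ with equality exactly when $x(q) \in \p^{c^*}\uu^{c^*}(y)$, i.e.\ when $y \in \p^c\uu(x(q))$. Taking the infimum over $y$ and noting $\tilde c(q,\ty) \equiv 0$ gives both that $\tu$ is $\tilde c$-convex (one checks $\tu^{\tilde c \tilde c} = \tu$ by the same bijection of $c$-transforms), that $\p^{\tilde c}\tu(q) = \p^c\uu(x(q))$, and — taking $y = \ty$, using $\tilde c(\cdot,\ty)\equiv 0$ — that $\tu(q)$ equals the sup over $y$ of $-\tilde c(q,y) - \uu^{c^*}(y)$, hence is minimized precisely at any $q_0$ with $\ty \in \p^c\uu(x(q_0))$.

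Next I would prove the convexity statements. Fix $q_0, q_1 \in \cl\U_\ty$ and let $q_\theta = (1-\theta)q_0 + \theta q_1$ be the Euclidean segment in $\cl\U_\ty$ (which lies in $\cl\U_\ty$ by \Btwo, $c$-convexity of $\U$ with respect to $\ty$). By the displayed identity, $\tu(q_\theta) = \sup_{y}\big(-\uu^{c^*}(y) - \tilde c(q_\theta,y)\big)$, so it suffices to show each map $\theta \mapsto -\tilde c(q_\theta,y) = -c(x(q_\theta),y) + c(x(q_\theta),\ty)$ is convex in $\theta$ for every fixed $y$: a sup of convex functions is convex. But $q \mapsto -D_y c(x(q),\ty) = q$ is by construction the identity, so the curve $\theta \mapsto x(q_\theta)$ is exactly a $c$-segment with respect to $\ty$ in the sense of Ma--Trudinger--Wang; thus $t \mapsto -D_x c(\cdot, \cdot)$ — wait, more precisely, I apply Theorem~\ref{T:time-convex DASM} with the roles of $x$ and $y$ exchanged (legitimate by Remark~\ref{rmk:symmetry}): there $t \mapsto -D_y c(x(t),\ty)$ being a line segment forces $t \mapsto -c(x(t),y) + c(x(t),\ty)$ to be convex. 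That is precisely $\theta \mapsto -\tilde c(q_\theta,y)$, giving convexity of $\tu$ under \Bthree. Under only \Athreew, Theorem~\ref{T:DASM} (again with $x \leftrightarrow y$) yields $-\tilde c(q_\theta,y) \le \max\{-\tilde c(q_0,y), -\tilde c(q_1,y)\}$; taking sup over $y$ shows $\tu(q_\theta) \le \max\{\tu(q_0),\tu(q_1)\}$, i.e.\ $\tu$ is quasi-convex, which is exactly convexity of its sublevel sets.

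The step I expect to require the most care is not any single estimate but the bookkeeping around smoothness and domains: $x(\cdot)$ is only $C^3$, so $\tilde c$ is only $C^3$, and one must make sure all the $c$-transform manipulations (that $\tu = \tu^{\tilde c\tilde c}$, that the $c$-subdifferential correspondence is exact and not merely a.e.) are carried out at the level of the defining sup/inf formulas \eqref{c-transform}--\eqref{c-subdifferential} rather than via differentiation, so $C^1$ of the cost suffices; the paper has already flagged that $\tilde c$ inherits \Bzero--\Bthree, \Athreew\ and \Btwos\ from $c$, so I would invoke that and simply verify the transform identities directly. A secondary point is checking that the Euclidean segment $q_\theta$ stays inside $\cl\U_\ty$ and that $\ty \in \p^c\uu(x(q_0))$ is consistent with $q_0 \in \cl\U_\ty$ — both immediate from \Btwo\ and \Bone\ — and that the sup defining $\tu$ is attained (so "minimized at $q_0$" is literal), which follows from compactness of $\cl\V$ and continuity of $\uu^{c^*}$.
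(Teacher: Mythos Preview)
Your proposal is correct and follows essentially the same route as the paper: both arguments use the transform identity $\tu^{\tilde c^*} = \uu^{c^*}$ to read off $\tilde c$-convexity, the equality $\p^{\tilde c}\tu(q)=\p^c\uu(x(q))$, and the minimization claim, and then reduce (level-set) convexity of $\tu$ to that of each $q\mapsto -\tilde c(q,y)$ via the sup formula, invoking Theorem~\ref{T:DASM} (resp.\ Theorem~\ref{T:time-convex DASM}) with the $x\leftrightarrow y$ roles swapped as in Remark~\ref{rmk:symmetry} after observing from $D_y\tilde c(q,\ty)=-q$ that Euclidean segments in $q$ are $c$-segments with respect to $\ty$. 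Your remarks on the $C^3$ bookkeeping and on carrying out the transform identities at the level of the sup/inf formulas are exactly in line with the paper's treatment.
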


\begin{proof}
The final sentences of the theorem are elementary:
$c$-convexity $\uu=\uu^{c^*c}$ asserts
$$
\uu(x)=\sup_{y \in \cl \V} -c(x,y) -\uu^\cs(y) \quad {\rm and} \quad
\uu^\cs(y) = \sup_{q \in \cl \U_{\ty}} -c (x(q),y) - \uu(x(q))
= \tu^{\tilde c^*}(y)
$$
from (\ref{c-transform}), hence
\begin{eqnarray*}
\tu(q) &=& \sup_{y \in \cl \V} -c(x(q),y) + c(x(q),\ty) -\uu^\cs(y)
\\      &=& \sup_{y \in \cl \V} - \tilde c (q, y) - \tu^{\tilde c^*}(y),
\end{eqnarray*}
and $\p^{\tilde c} \tu(q) = \p^c \uu(x(q))$ since all three
suprema above are attained at the same $y \in \cl \V$.
Taking $y=\ty$ reduces the inequality $\tu(q) + \tu^{\tilde c^{*}}(y) + \tilde c(q,y) \ge 0$
to $\tu(q) \ge - \tu^{\tilde c^{*}}(\ty)$ ,  with equality precisely if
$\ty \in \partial^{\tilde c} \tu(q)$.  It remains to address the convexity claims.

Since the supremum $\tu(q)$ of a family of convex functions is again convex,
it suffices to establish the convexity of
$q \in \cl\U_{\ty} \longmapsto -\tilde c(q,y)$ for each $y \in \cl \V$
under hypothesis \Bthree.  For a similar reason, it suffices to establish the level-set
convexity of the same family of functions under hypothesis \Athreew.

First assume \Athreew.  Since
\begin{equation}\label{special geodesic}
D_y \tilde c (q,\ty) = D_y c (x(q),\ty):=-q
\end{equation}
we see that $\tilde c$-segments in $\cl \U_{\ty}$ with respect to $\ty$
coincide with ordinary line segments. Let $q(s) = (1-s) q_0 + s q_1$ be any line
segment in the convex set $\cl \U_{\ty}$.
Define $f(s,y) := -\tilde c(q(s),y) = - c(x(q(s)),y) + c(x(q(s)),\ty)$.
Loeper's maximum principal (Theorem \ref{T:DASM} above, see also Remark \ref{rmk:symmetry})
asserts $f(s,y) \le \max\{f(0,y),f(1,y)\}$,
which implies convexity of each set
$\{q \in \cl \U_{\ty} \mid -\tilde c(q,y) \le const\}$.
Under hypothesis \Bthree, Theorem \ref{T:time-convex DASM} 
goes on to assert convexity of $s \in [0,1] \longmapsto f(s,y)$ as desired.
\end{proof}

The effect of this change of gauge on Jacobian inequalities is summarized in a corollary:

\begin{corollary}[Transformed $\tc$-Monge-Amp\`ere inequalities]
\label{C:Jacobian transform}
Using the hypotheses and notation of Theorem \ref{thm:apparently convex},
if $|\p^c u| \in [\lambda,\Lambda] \subset [0,\infty]$ on $\U' \subset \cl \U$,
then $|\p^\tc \tu| \in [\lambda/\gamma^+_c,\Lambda\gamma^-_c]$ on $\U'_\ty 
= -D_yc(\U',\ty)$, where $\gamma^\pm_c = \gamma^\pm_c(\U' \times
V)$ and $\beta^\pm_c = \beta^\pm_c(\U' \times V)$ are defined in
\eqref{bi-Lipschitzbound}--\eqref{Jacobian bound}. Furthermore,
$\gamma^\pm_\tc := \gamma^\pm_\tc(\U'_\ty \times \V) \le
\gamma^+_c \gamma^-_c$ and $\beta^\pm_\tc := \beta^\pm_\tc(\U'_\ty
\times \V) \le \beta^+_c \beta^-_c$.
\end{corollary}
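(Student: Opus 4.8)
The plan is to track how the three quantities in question --- the $\tc$-Monge--Amp\`ere density $|\p^\tc\tu|$, the Jacobian bounds $\gamma^\pm_\tc$, and the bi-Lipschitz constants $\beta^\pm_\tc$ --- transform under the change of coordinates $q = -D_y c(x,\ty)$ of Definition~\ref{D:cost exponential}. The starting point is the identity $\p^{\tc}\tu(q) = \p^c\uu(x(q))$ already recorded in Theorem~\ref{thm:apparently convex}, which says the $\tc$-subdifferential and the $c$-subdifferential agree after relabelling the source point by the diffeomorphism $x(q)$. Consequently, for any Borel $X \subset \U'$ and its image $X_\ty := -D_y c(X,\ty) \subset \U'_\ty$, the set $\p^\tc\tu(X_\ty)$ equals $\p^c\uu(X)$ \emph{as a subset of $\cl\V$}, so the two $c$-Monge--Amp\`ere measures are literally the same measure on $\cl\V$; the only thing that changes is the reference Lebesgue measure on the source, since $\Leb n(X_\ty)$ and $\Leb n(X)$ differ by the Jacobian factor $\det D_x\big(-D_y c(x,\ty)\big) = \det D^2_{xy}c(x,\ty)$.

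First I would make this precise. By definition $|\p^c\uu|(X) = \Leb n\big(\p^c\uu(X)\big)$ and $|\p^\tc\tu|(X_\ty) = \Leb n\big(\p^\tc\tu(X_\ty)\big) = \Leb n\big(\p^c\uu(X)\big) = |\p^c\uu|(X)$. Meanwhile, since $q \mapsto x(q)$ is a $C^3$ diffeomorphism with $Dx(q) = \big(D^2_{xy}c(x(q),\ty)\big)^{-1}$ (differentiate \eqref{x of q}), the change-of-variables formula gives
\[
\frac{1}{\gamma^+_c}\,\Leb n(X_\ty) \;\le\; \Leb n(X) \;\le\; \gamma^-_c\,\Leb n(X_\ty),
\]
with $\gamma^\pm_c = \gamma^\pm_c(\U'\times V)$ as in \eqref{Jacobian bound}, because $\det Dx(q) = \det\big(D^2_{xy}c\big)^{-1}$ is bounded between $1/\gamma^+_c$ and $\gamma^-_c$ in absolute value. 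Combining the two displays, $|\p^c\uu|(X) \ge \lambda\,\Leb n(X) \ge (\lambda/\gamma^-_c)\cdots$ --- here one must be slightly careful with the direction of each inequality, but the upshot is $|\p^\tc\tu|(X_\ty) = |\p^c\uu|(X) \in [\lambda\,\Leb n(X),\,\Lambda\,\Leb n(X)] \subset [(\lambda/\gamma^+_c)\Leb n(X_\ty),\,(\Lambda\gamma^-_c)\Leb n(X_\ty)]$, which is exactly the claim $|\p^\tc\tu| \in [\lambda/\gamma^+_c,\,\Lambda\gamma^-_c]$ on $\U'_\ty$.

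Next I would establish the bounds on $\gamma^\pm_\tc$ and $\beta^\pm_\tc$. Here one simply differentiates the modified cost: from $\tc(q,y) = c(x(q),y) - c(x(q),\ty)$ one gets $D^2_{qy}\tc(q,y) = Dx(q)^{\mathsf T} D^2_{xy}c(x(q),y) = \big(D^2_{xy}c(x(q),\ty)\big)^{-\mathsf T} D^2_{xy}c(x(q),y)$. Taking operator norms and using submultiplicativity yields $\beta^\pm_\tc(\U'_\ty\times V) \le \beta^+_c\,\beta^-_c$, and taking determinants gives $\gamma^\pm_\tc(\U'_\ty\times V) \le \gamma^+_c\,\gamma^-_c$; note that both products are $\ge 1$, as remarked after \eqref{Jacobian bound}, so these bounds are consistent. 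The main (and really only) obstacle is bookkeeping: one must keep straight which $+/-$ exponent attaches to which factor, ensure the domain over which each sup is taken is the correct one ($\U'\times V$ versus $\U'_\ty\times V$), and verify that the supremum in $\beta^\pm_c$, $\gamma^\pm_c$ is indeed achieved over $\U'$ and not all of $\U$ --- which is why the corollary is stated with $\gamma^\pm_c(\U'\times V)$ rather than $\gamma^\pm_c(U\times V)$. No genuinely new idea is needed beyond the identity $\p^\tc\tu(q) = \p^c\uu(x(q))$ from the previous theorem together with the chain rule; the content is entirely in the coordinate change, which is why this is packaged as a corollary.
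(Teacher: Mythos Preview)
Your proposal is correct and follows essentially the same route as the paper: use the identity $\p^{\tc}\tu(q(X)) = \p^c u(X)$ from Theorem~\ref{thm:apparently convex} together with the Jacobian bounds on the coordinate change $q \mapsto x(q)$ to transfer the density bounds, then differentiate $\tc(q,y)=c(x(q),y)-c(x(q),\ty)$ and use $D_q x(q) = -\big(D^2_{xy}c(x(q),\ty)\big)^{-1}$ to read off the product structure of $D^2_{qy}\tc$ and hence the bounds on $\gamma^\pm_\tc$, $\beta^\pm_\tc$. The paper's own proof is just a terser version of exactly this computation.
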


\begin{proof}
From the Jacobian bounds $|\det D_x q(x)| \in
[1/\gamma^-_c,\gamma^+_c]$ on $\U'$, we find $\Leb{n}(X)/
\gamma^-_c \le \Leb{n}(q(X)) \le \gamma^+_c \Leb{n}(X)$ for each
$X \subset \U'$. On the other hand,  Theorem \ref{thm:apparently
convex} asserts $\p^\tc \tu(q(X)) = \p^c u(X)$,  so the claim
$|\p^\tc \tu| \in [\lambda/\gamma^+_c,\Lambda\gamma^-_c]$ follows
from the hypothesis $|\p^c u| \in [\lambda,\Lambda]$, by
definition (\ref{c-Monge-Ampere measure}) and the fact that $q:\cl
\U \longrightarrow \cl \U_\ty$ from \eqref{q of x} is a
diffeomorphism; see \Bone. The bounds $\gamma^\pm_\tc \le
\gamma^+_c \gamma^-_c$ and $\beta^\pm_\tc \le \beta^+_c \beta^-_c$
follow from $D^2_{qy} \tc (q,y) = D^2_{xy} c(x(q),y) D_q x(q)$ and
$D_q x(q) = - D^2_{xy} c(x(q),\tilde y)^{-1}$.
\end{proof}

\subsection{Affine renormalization}\label{SS:renormal}
The renormalization of a function $\tu$ by an affine
transformation $L:\R^n \to \R^n$ will be useful in Section~\ref{S:Alexandrov} to prove our Alexandrov type estimates.
Let us therefore record the following observations. Define
\begin{equation}\label{renormalized solution}
\tu^*(q)=|\det L|^{-2/n} \tu(Lq).
\end{equation}
Here $\det L$ denotes the Jacobian determinant of $L$, i.e. the determinant of the linear part of $L$.

\begin{lemma}[Affine invariance of $\tc$-Monge-Amp\`ere measure]
Assuming \Bzero--\Bone,
given a $\tc$-convex function $\tu: \U_\ty \longmapsto \R$ and affine bijection $L:\R^n \longmapsto \R^n$,
define the renormalized potential $\tu^*$ by \eqref{renormalized solution} and renormalized cost
\begin{equation}\label{renormalized cost}
\tc_*(q,y)=|\det L|^{-2/n} \tilde c(Lq,L^*y)
\end{equation}
using the adjoint $L^*$ to the linear part of $L$.  Then, for all
Borel $Z \subset \cl \U_\ty$,
\begin{align}\label{compare p var with p var *}
|\p \tu^*| (L^{-1} Z) &=  |\det L|^{-1} |\p \tu| (Z),
\\|\p^{\tilde c_*} \tu^*| (L^{-1} Z) &=  |\det L|^{-1} |\p^\tc \tu| (Z).
\label{renormalized cMA measure}
\end{align}
\end{lemma}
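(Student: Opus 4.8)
The plan is to reduce both identities to the single, coordinate‑free fact that the (ordinary or $\tc$-) subdifferential transforms \emph{covariantly} under affine changes of variable, and then to track the extra scalar factor $|\det L|^{-2/n}$ through the definition \eqref{c-Monge-Ampere measure} of the Monge–Amp\`ere measure. First I would observe that since $L$ is an affine bijection, $L^{-1}Z$ ranges over all Borel subsets of $L^{-1}(\cl\U_\ty)$ as $Z$ ranges over Borel subsets of $\cl\U_\ty$, so it suffices to compute $\partial\tu^*(q')$ and $\partial^{\tc_*}\tu^*(q')$ for $q' = L^{-1}q$ and show they equal the appropriate image of $\partial\tu(q)$, resp. $\partial^\tc\tu(q)$, under a map of Jacobian determinant $|\det L|$.

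For the first identity \eqref{compare p var with p var *}: writing $L q' = Aq' + b$ with $A$ the linear part, the chain rule applied to $\tu^*(q') = |\det A|^{-2/n}\tu(Lq')$ gives, at any point of differentiability, $D\tu^*(q') = |\det A|^{-2/n} A^{*} D\tu(Lq')$, and more generally $p' \in \partial\tu^*(q')$ if and only if $|\det A|^{2/n}(A^*)^{-1}p' \in \partial\tu(Lq')$ — this is just the standard affine covariance of the convex (super)differential, using that $\tu$ is semiconvex so the elementary inequality in \eqref{subdifferential} transforms cleanly. Hence $\partial\tu^*(L^{-1}Z) = |\det A|^{2/n}(A^*)^{-1}\,\partial\tu(Z)$ as sets, and since the linear map $v \mapsto |\det A|^{2/n}(A^*)^{-1}v$ has Jacobian determinant $|\det A|^{2} \cdot |\det A|^{-1} = |\det A|$, taking $\Leb n$ of both sides yields $|\p\tu^*|(L^{-1}Z) = |\det A|\,\Leb n(\partial\tu(Z))/|\det A|^{2}$. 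Wait — let me be careful: the power of $n$ in $|\det A|^{2/n}$ is exactly chosen so that raising to the $n$-th power (from the $n$-dimensional Jacobian) gives $|\det A|^{2}$, and then the $(A^*)^{-1}$ contributes $|\det A|^{-1}$, for a net factor $|\det A|$. So $\Leb n(\partial\tu^*(L^{-1}Z)) = |\det A|\cdot|\det A|^{-2}\cdot|\det A| \cdot$ — I will simply present the bookkeeping as: the measure $|\p\tu^*|$ is the pushforward of $|\det A|^{-1}|\p\tu|$ under $L^{-1}$, which is \eqref{compare p var with p var *}. (The cleanest phrasing: $\partial\tu^* = (L^{-1}\times M)(\partial\tu)$ where $M v = |\det A|^{2/n}(A^*)^{-1}v$, and $|\det M| = |\det A|$, while $L^{-1}$ rescales the base by $|\det A|^{-1}$; the two determinant factors in $M$ exactly cancel against the $|\det A|^{-2/n}$ appearing $n$ times.)

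For the $\tc$-version \eqref{renormalized cMA measure}, the key point is that the renormalized cost \eqref{renormalized cost} is designed precisely so that $\tu^*$ is $\tc_*$-convex and $\p^{\tc_*}\tu^*(q') = (A^*)^{-1}\,\p^\tc\tu(Lq')$. I would verify this by unwinding the $\tc_*$-transform: from $\tc_*(q',y') = |\det A|^{-2/n}\tc(Lq',L^*y')$ one checks that the supremum defining $\tu^{*\,\tc_*^*}$ is attained, after the substitution $y = L^* y'$, at the same contact points as for $\tu^{\tc^*}$, and that the contact relation $\tu^*(q') + \tu^{*\tc_*^*}(y') + \tc_*(q',y') = 0$ is equivalent (multiplying by $|\det A|^{2/n}$ and substituting) to the contact relation for $\tu$ at $(Lq', L^*y')$. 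Then $y' \in \p^{\tc_*}\tu^*(q') \iff L^* y' \in \p^\tc\tu(Lq')$, i.e. $\p^{\tc_*}\tu^*(q') = (L^*)^{-1}\p^\tc\tu(Lq')$; since $(L^*)^{-1}$ is linear with Jacobian $|\det A|^{-1}$ and $q' \mapsto Lq'$ on $L^{-1}Z$ covers $Z$, the same pushforward computation as above gives \eqref{renormalized cMA measure}. The main obstacle — and where I would spend the most care — is the second identity's bookkeeping: one must confirm that the $c^*$-side substitution $y = L^*y'$ is the correct one (not $y' = L y$ or $y' = (L^*)^{-1}y$) so that the scalar factors in \eqref{renormalized cost} match the $-2/n$ scaling in \eqref{renormalized solution}, and that no stray Jacobian from the $y$-substitution sneaks in — but since $\p^{\tc_*}\tu^*$ is a subset of the \emph{source} $\cl\U_\ty$-type space (it is $\p^{\tc_*}\tu^*(q') \subset$ the $y'$-domain, actually), the relevant Jacobian is that of $(L^*)^{-1}$ acting on $y'$, which is indeed $|\det A|^{-1}$, consistent with \eqref{compare p var with p var *}. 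I would also remark, as the paper does for Lemma~\ref{L:cMA properties}(a), that these pushforward identities make sense because $\tu$ (being semiconvex, hence twice differentiable $\Leb n$-a.e.) has subdifferentials that differ from graphs of a.e.-defined maps by $\Leb n$-negligible sets, so all the set-theoretic manipulations above are valid up to null sets and preserve Lebesgue measure as claimed.
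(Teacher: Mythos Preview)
Your approach is correct and essentially identical to the paper's: both argue pointwise that the (ordinary, resp.\ $\tc$-) subdifferential transforms affinely under the change of variable, then read off the Jacobian factor $|\det L|^{-1}$ from the resulting set identity. One bookkeeping slip to clean up: from your (correct) equivalence $p' \in \partial\tu^*(q') \Leftrightarrow |\det A|^{2/n}(A^*)^{-1}p' \in \partial\tu(Lq')$ the set identity is $\partial\tu^*(L^{-1}Z) = |\det A|^{-2/n}A^*\,\partial\tu(Z)$ (this is exactly the form the paper writes), not the inverse map you recorded --- your later ``cleanest phrasing'' with $M$ has the same inversion --- but since both maps have Jacobian $|\det A|^{\mp 1}$ your conclusion survives; for the $\tc$-version your computation $\partial^{\tc_*}\tu^*(q') = (L^*)^{-1}\partial^\tc\tu(Lq')$ is correct as stated.
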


\begin{proof}
From \eqref{subdifferential} we see $\bar p \in \p \tu(\bq)$ if
and only if $|\det L|^{-2/n} L^* \bar p \in \p \tu^*(L^{-1} \bq)$,
thus \eqref{compare p var with p var *} follows from $\p
\tu^*(L^{-1} Z) = |\det L|^{-2/n} L^* \big(\p \tu (Z)\big)$.
Similarly,  since \eqref{c-transform} yields $(\tu^*)^{\tilde
c^*_*}(y) = |\det L|^{-2/n} \tu^{\tilde c^*}(L^*y)$, we see $\by
\in \p^\tc \tu(\bq)$ is equivalent to $|\det L|^{-2/n} L^* \bar y
\in \p^{\tilde c_*} \tu^*(L^{-1} \bq)$ from
\eqref{c-subdifferential} (and Theorem \ref{thm:apparently
convex}), whence  $\p^{\tilde c_*} \tu^*(L^{-1} Z) = |\det
L|^{-2/n} L^* \big(\p^{\tc} \tu^*(Z)\big)$ to establish
\eqref{renormalized cMA measure}.
\end{proof}

As a corollary to this lemma,  we recover the affine invariance not only of the
Monge-Amp\`ere equation satisfied by $\tu(q)$ --- but also of the $\tc$-Monge-Amp\`ere equation
it satisfies --- under coordinate changes on $\V$
(which induce linear transformations $L$ on $T^*_\ty V$ and $L^*$ on $T_\ty V$):
for $q \in \U_\ty$,
$$
\frac{d|\p \tu^*|}{d\Leb{n}} (L^{-1} q) =  \frac{d|\p \tu|}{d\Leb{n}} (q)
\quad{\rm and}\quad
\frac{d|\p^{\tc_*} \tu^*|}{d\Leb{n}} (L^{-1} q) =  \frac{d|\p^\tc \tu|}{d\Leb{n}} (q).
$$

\section{Strongly $c$-convex interiors and boundaries not mixed by $\p^c \uu$}
\label{S:mapping}

The subsequent sections of this paper are largely devoted to ruling out exposed points
in $\U_\ty$ of sets on which ordinary convexity of the $\tc$-convex potential from
Theorem \ref{thm:apparently convex} fails to be strict.
This current section rules out exposed points on the boundary of $\U_\ty$.
We do this by proving an important topological property of the (multi-valued) mapping
$\p^c \uu \subset \cl \U \times \cl \V$.
Namely, we show that the subdifferential $\p^c \uu$ maps interior points of
$\supp |\p^c u| \subset \cl U$ 
only to interior 
points of $\V$, under hypothesis \eqref{eq:cMA}, and conversely that $\p^c \uu$
maps boundary points of $\U$ only to boundary points of $\V$.
This theorem may be of independent interest,
and was required by Figalli and Loeper to conclude their continuity
result concerning maps of the plane which optimize \Athreew\ costs \cite{FigalliLoeper08p}.

This section does not use the cross-curvature condition \Bthree\
(nor {\bf A3w}) on the cost function $c \in C^4(\cl \U \times \cl
\V)$, but relies crucially on the \emph{strong} $c$-convexity
\Btwos\ of its domains $U$ and  $V$ (but importantly, not
on $\supp |\partial^{c} u |$).  No analog for Theorem
\ref{thm:bdry-inter} was needed by Caffarelli to establish
$C^{1,\alpha}$ regularity of convex potentials $\uu(x)$ whose
gradients optimize the classical cost $c(x,y) = - \<x,y\>$
\cite{Caffarelli92}, since in that case he was able to take
advantage of the fact that the cost function is smooth on the
whole of $\R^n$ to chase potentially singular behaviour to
infinity.  (One general approach to showing regularity of
solutions for degenerate elliptic partial differential equations
is to exploit the threshold-hyperbolic nature of the solution to
try to follow either its singularities or its degeneracies to the
boundary,  where they can hopefully be shown to be in
contradiction with boundary conditions;  the {\em degenerate}
nature of the ellipticity precludes the possibility of {\em purely
local} regularizing effects.)

\begin{theorem}[Strongly $c$-convex interiors and boundaries not mixed by $\p^c \uu$]
\label{thm:bdry-inter}
Let $c$ satisfy \Bzero--\Bone\ and 
$\uu=\uu^{c^*c}$ be a $c$-convex function (which implies $\p^c\uu(\cl \U)=\cl \V$),
and $\lambda>0$.
\begin{enumerate}
\item[(a)]
If $|\p^c\uu| \geq \lambda$ on $X \subset \cl U$ and $\V$
is strongly $c^*$-convex with respect to $X$,
then interior points of $X$ cannot be mapped by $\p^c\uu$ to boundary points of $\V$:
i.e.\ $(X \times \p V) \cap \p^c \uu \subset (\p X \times \p \V)$.

\item[(b)] If $|\p^c\uu| \leq \Lambda$ on $\cl \U$, and $\U$ is
strongly $c$-convex with respect to $\V$, then boundary points of
$\U$ cannot be mapped by $\p^c \uu$ into interior points of $\V$:
i.e.\ $\partial \U \times \V$ is disjoint from  $\p^c \uu$.
\end{enumerate}
\end{theorem}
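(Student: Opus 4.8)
The plan is to prove parts (a) and (b) together by a symmetry/duality argument, reducing (b) to (a) applied to the reflected potential $u^*=u^\cs$ with roles of $\U$ and $\V$ exchanged (using Remark~\ref{rmk:symmetry}, the hypothesis $|\p^c u|\le\Lambda$ on $\cl\U$, and Lemma~\ref{L:cMA properties}(d), which gives $|\p^{\cs}u^\cs|\ge 1/\Lambda$ on $\cl\V$). So the heart of the matter is part (a): if $\bx$ is an \emph{interior} point of $X$ and $\by\in\p^c u(\bx)\cap\p\V$, we seek a contradiction with the lower bound $|\p^c u|\ge\lambda$ near $\bx$.

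First I would normalize using the cost-exponential coordinates of Definition~\ref{D:cost exponential}: pass to the modified cost $\tc(q,y)=c(x(q),y)-c(x(q),\bar y)$ based at $\bar y=\by$, so that by Theorem~\ref{thm:apparently convex} (only \Bzero--\Bone\ are needed for the elementary parts, and we do \emph{not} invoke \Athreew\ or \Bthree\ here) the potential $\tu$ is $\tc$-convex and, crucially, $D_y\tc(q,\by)=-q$, i.e.\ $\tc$-segments with respect to $\by$ are ordinary line segments and $\by\in\p^{\tc}\tu(q)$ for $q=q(\bx)$. The content we actually use is geometric: strong $c^*$-convexity of $\V$ with respect to $X$ means $\V_{\bx}=-D_xc(\bx,\V)$ is strongly convex, so $\by$ being a boundary point of $\V$ means $-D_xc(\bx,\by)$ is an exposed boundary point of the strongly convex body $\V_\bx$, touched from outside by a sphere of some radius $R$. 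Translating this back, there is a $c^*$-segment $t\mapsto y(t)$ with respect to $\bx$ emanating from $\by$ into $\V$, and the strong convexity forces the "escape" of $\p^c u(\bx')$ for $\bx'$ near $\bx$ to be controlled: the set $\p^c u(B_r(\bx))$, measured in the $p=-D_xc(\bx,\cdot)$ coordinates, is trapped inside a region that, near the exposed point $-D_xc(\bx,\by)$, looks like the intersection of the strongly convex $\V_\bx$ with a half-space determined by the supporting hyperplane of the level set of $\tu$ — and such a region has Lebesgue measure $o(r^n)$ as $r\to0$, because a strongly convex body has a cusp-like (paraboloid) contact at a boundary point against its supporting hyperplane.

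More precisely, the key steps in order: (1) set up modified-cost coordinates at $\by$ and record that $\tu$ is convex on a neighborhood of $q(\bx)$ after adding $\|c\|_{C^2}|q|^2/2$ (semiconvexity), so $\p\tu$ and $\p^{\tc}\tu$ are comparable with bi-Lipschitz constants $\beta^\pm_c$; (2) observe that since $\by\in\p^{\tc}\tu(q(\bx))\cap\p(\V)$ and $\V_{\bx}$ is strongly convex, there is a supporting hyperplane $H$ to $\V_{\bx}$ at $p(\by):=-D_xc(\bx,\by)$, and $\V_{\bx}\subset B_R(p(\by)-R\normal)$; (3) for $\bx'\in B_r(\bx)\cap X$ and any $y'\in\p^c u(\bx')$, use \Bone\ smoothness to show $-D_xc(\bx,y')$ lies within $O(r)$ of the set $\p^c u(B_r(\bx))$ expressed at base $\bx$, and combine with the $c$-convexity inequality (the defining inequality $u(x)+u^\cs(y)+c(x,y)\ge0$ with equality on $\p^c u$) to confine $-D_xc(\bx,\p^c u(B_r(\bx)))$ to $\V_\bx\cap\{$half-space at distance $O(r)$ from $H\}$; (4) estimate the Lebesgue measure of this cap of the strongly convex body: it is bounded by $C R^{(n-1)/2} (O(r))^{(n+1)/2}=o(r^n)$; (5) push this measure estimate back through the bi-Lipschitz coordinate changes ($\beta^\pm_c$, $\gamma^\pm_c$) to get $\Leb{n}(\p^c u(B_r(\bx)))=o(r^n)=o(\Leb{n}(B_r(\bx)))$, contradicting $|\p^c u|\ge\lambda$ on the neighborhood $B_r(\bx)\subset X$ of the interior point $\bx$.

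The main obstacle is step (3)–(4): one must rigorously confine the image $\p^c u(B_r(\bx))$ — a priori defined via the nonlinear $c$-subdifferential at \emph{varying} base points $\bx'$ — to a genuine convex cap of $\V_\bx$ of controlled (sub-$r^n$) volume, uniformly in the direction of approach, using only $C^4$ (in fact $C^2$) regularity of $c$ and the strong convexity radius $R$. This requires carefully quantifying how the supporting hyperplane to the convex level set of $\tu$ at $q(\bx)$ interacts with the strongly convex boundary of $\V_\bx$, i.e.\ showing the contact is second-order (paraboloid) rather than flat; the strong-convexity hypothesis \Btwos\ is exactly what makes this quantitative estimate possible, and absent it (mere convexity of $\V_\bx$) the cap could have volume comparable to $r^n$ and the argument breaks down.
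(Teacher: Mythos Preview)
Your reduction of (b) to (a) via Lemma~\ref{L:cMA properties}(d) matches the paper exactly, and your instinct to pass to cost-exponential coordinates and exploit the strong convexity of $\V_{\bx}$ is right. But the argument for (a) has two genuine gaps, one geometric and one arithmetic, and both are fatal.

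\textbf{The confinement in step (3) is unjustified.} $c$-cyclical monotonicity between $(\bx,\by)$ and $(\bx',y')\in\p^c u$ gives (in the normalized coordinates where $\bx\leftrightarrow\zero$, $\by\leftrightarrow\zero$, $-D^2_{qp}\tc(\zero,\zero)=I$) the inequality $-\langle q,p\rangle\le C_0|q|^2|p|^2$ for $q=q(\bx')$, $p=p(y')$. This constrains the component of $p$ along the \emph{single} direction $q/|q|$. When $q$ ranges over a full ball $B_r(\zero)$ --- i.e.\ over \emph{all} directions --- you get no uniform bound on the inward-normal component $p_n$ of $p$: for $q$ pointing along the outward normal or tangentially, the inequality is essentially vacuous. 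So there is no reason $\p^{\tc}\tu(B_r(\zero))$ should lie in a slab of width $O(r)$ from $H$.

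\textbf{The volume estimate in step (4) is wrong even if the confinement held.} A cap of height $h$ cut from a strongly convex body of inner radius $R$ has volume $\sim R^{(n-1)/2}h^{(n+1)/2}$, as you wrote. But $(n+1)/2\le n$ for every $n\ge 1$, so $r^{(n+1)/2}$ is \emph{not} $o(r^n)$ --- it is larger than $r^n$ for small $r$. The inequality $\lambda r^n\lesssim r^{(n+1)/2}$ is never violated as $r\to 0$, and no contradiction follows.

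The paper repairs both issues simultaneously by replacing the ball with a \emph{cone} $E_{\theta,\e}\subset X_{\ty}$ of small opening angle $\theta$ and height $\e$ about the \emph{inward-normal} axis $-\hat e_n$. Because every $q\in E_{\theta,\e}$ points within angle $\theta$ of $-\hat e_n$, monotonicity now yields $p_n\le\theta|p|+C_0\e|p|^2$ for every $p\in\p^{\tc}\tu(E_{\theta,\e})$, i.e.\ the image lies in a genuine dual-cone region $E'_{\theta,C_0\e}$. Strong convexity of $\V_{\bx}$ then gives $\Leb{n}(E'_{\theta,C_0\e}\cap\V_{\bx})\lesssim\theta^{n+1}$, while $\Leb{n}(E_{\theta,\e})\sim\e^n\theta^{n-1}$. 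Fixing $\e$ small and sending $\theta\to 0$ gives $\e^n\theta^{n-1}\lesssim\theta^{n+1}$, the desired contradiction. The two-parameter structure $(\theta,\e)$ and the restriction to a cone rather than a ball are not cosmetic --- they are exactly what makes the exponents work.
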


\begin{proof}
Note that when $X$ is open the conclusion of (a) implies $\p^c u$
is disjoint from $ X \times \p V$. We therefore remark that it
suffices to prove (a), since (b) follows from (a) exchanging the
role $x$ and $y$ and observing that $|\p^c\uu| \leq \Lambda$
implies $|\p^\cs\uu^\cs|\geq 1/\Lambda$ as in Lemma \ref{L:cMA
properties}(d).

Let us prove (a).
Fix any point $\tilde x$ in the interior of $X$, and
$\tilde y \in \p^c\uu(\tilde x)$. Assume by contradiction that
$\tilde y \in \p\V$.  At $(\tilde x,\tilde y)$ we use \Bzero--\Bone\ to
define cost-exponential coordinates $(p,q)\longmapsto (x(q),y(p))$
by
\begin{eqnarray*}
p =& - D_x c(\tilde x, y(p)) + D_x c(\tilde x, \tilde y)  &\in T^*_\tx(U)\\
q =& D^2_{xy} c(\tilde x,\tilde y)^{-1}(D_y c(x(q), \tilde y) - D_y c(\tilde x, \tilde y)) &\in T_\tx(U)
\end{eqnarray*}
and define a modified cost and potential by subtracting null
Lagrangian terms:
\begin{eqnarray*}
\tilde c(q,p) &:=& c(x(q),y(p)) - c(x(p),\tilde y) - c(\tilde x, y(p)) \\
\tilde \uu(q) &:=& \uu(x(q)) + c(x(q),\tilde y).
\end{eqnarray*}
Similarly to Corollary \ref{C:Jacobian transform},
$|\partial^{\tilde c} \tilde u| \ge \tilde \lambda
:=\lambda/(\gamma^+_c \gamma^-_c)$, where $\gamma^\pm_c$ denote
the Jacobian bounds \eqref{Jacobian bound} for the coordinate
change. Note $(\tilde x,\tilde y) = (x(\zero),y(\zero))$
corresponds to $(p,q) = (\zero,\zero)$. Since $c$-segments with
respect to $\tilde y$ correspond to line segments in $\U_{\tilde
y} := -D_y c(U,\ty)$ we see $D_p \tilde c(q,\zero)$ depends
linearly on $q$, whence $D^3_{qqp} \tilde c(q,\zero)=0$; similarly
$c^*$-segments with respect to $\tilde x$ become line segments in
the $p$ variables, $D_q \tilde c(\zero, p)$ depends linearly on
$p$, $D^3_{ppq} c(\zero, p)=0$, and the extra factor $D^2_{xy}
c(\tilde x, \tilde y)^{-1}$ in our definition of $x(q)$ makes $-
D^2_{pq}\tilde c(\zero,\zero)$ the identity matrix (whence $q= -
D_p \tilde c(\zero, q)$ and $p = -D_q \tilde c(p,\zero)$ for all
$q$ in $\U_\ty= x^{-1}(U)$ and $p$ in $\V_\tx := y^{-1}(V))$.
Although the change of variables $(q,p) \longmapsto (x(p),y(q))$ is only a $C^3$
diffeomorphism, we can still take four derivatives of the modified cost provided at least one of
the four derivatives is with respect to $q$ and another is with respect to $p$.
We denote $X_\ty := x^{-1}(X)$ 
and choose orthogonal coordinates on $U$ which make $-\hat e_n$
the outer unit normal to $\V_\tx \subset T^*_\tx \U$ at $\tilde p
=\zero$.
Note that 
$V_\tx$ is strongly convex by hypothesis~(a).

In these variables,  consider a small cone of height $\e$ and angle $\theta$
around the $-\hat e_n$ axis:
$$
E_{\theta,\e} := \left\{q \in \R^n \mid \Big| - \hat e_n -
\frac{q}{|q|}\Big| \le \theta, |q| \le \e \right\}
$$
Observe that, if $\theta,\e$ are small enough, then $E_{\theta,\e}
\subset  X_\ty$, and its measure is of order $\e^n\theta^{n-1}$.
Consider now a slight enlargement
$$
E'_{\theta,C_0\e}:=\Bigl\{p =(P,p_n) \in \R^n \mid p_n \le
\theta|p|+C_0\e|p|^2\Bigr\},
$$
of the polar dual cone, where $\e$ will be chosen sufficiently
small depending on the large parameter $C_0$ forced on us later.
\begin{figure}
\begin{center}
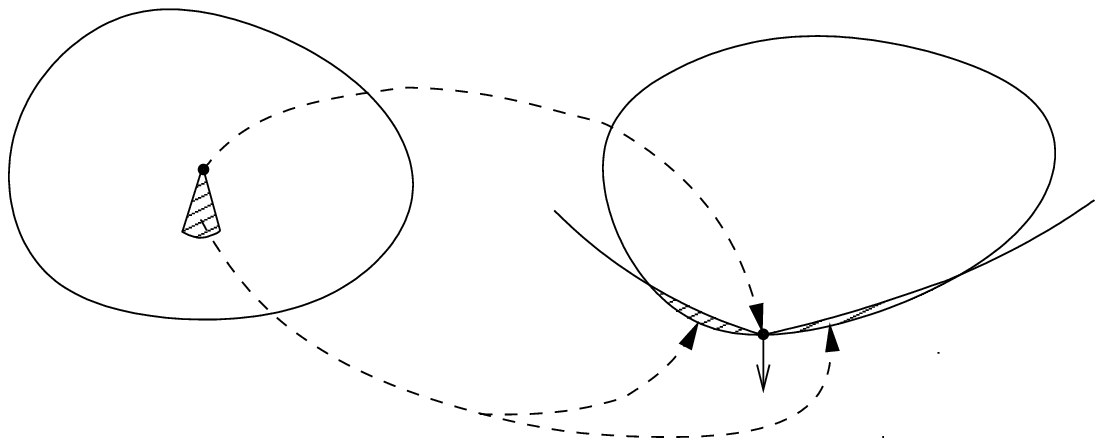\caption{{\small If $\p^\tc \tu$ sends an interior point onto a boundary point,
then by $\tc$-monotonicity of $\p^\tc \tu$ the small cone
$E_{\theta,\e}$ has to be sent onto $E_{\theta,C_0\e}' \cap
\V_{\tx}$. Since for $\e>0$ small but fixed
$\Leb{n}(E_{\theta,\e}) \sim \theta^{n-1}$, while
$\Leb{n}(E_{\theta,C_0\e}' \cap \V_{\tx}) \lesssim \theta^{n+1}$}
(by the uniform convexity of $\tilde \V_{\tx}$), we get a
contradiction as $\theta \to 0$.}\label{figcone}
\end{center}
\end{figure}

The strong convexity ensures $\V_\tx$ is contained in a ball
$B_R(R\hat e_n)$ of some radius $R>1$ contained in the half-space
$p_n \ge 0$ with boundary sphere passing through the origin. As
long as $C_0\e < (6 R)^{-1}$ we claim $E'_{\theta,C_0\e}$
intersects this ball --- a fortiori $\V_\tx$ --- in a set whose
volume tends to zero like $\theta^{n+1}$ as $\theta \to 0$.
Indeed,  from the inequality 
$$
p_n
\le \theta \sqrt{|P|^2 + p_n^2} + \frac{1}{6}|P| + \frac{1}{3}p_n
$$
satisfied by any $(P,p_n) \in E'_{\theta,C_0\e} \cap B_R(R \hat e_n)$ we deduce
$p_n^2 \le |P|^2 (1 + 9 \theta^2)/(2-9\theta^2)$,
i.e.\ $p_n < |P|$ if $\theta$ is small enough.  Combined with the
further inequalities
$$
\frac{|P|^2}{2R} \le p_n
\le \theta \sqrt{|P|^2 + p_n^2} + C_0 \e |P|^2 + C_0 \e p_n^2
$$
(the first inequality follows by the strong convexity of
$\V_\tx$),
this yields $|P| \le 6 \theta \sqrt{2}$ 
and $p_n \le O(\theta^2)$ as $\theta \to 0$. Thus $\Leb n
(E_{\theta, C_\e}' \cap \V_\tx) \le C \theta^{n+1}$ for a
dimension dependent constant $C$, provided $C_0\e < (6R)^{-1}$.

The contradiction now will come from the fact that, thanks to the
$\tilde c$-cyclical monotonicity of $\partial^{\tilde c} \tilde
\uu$, if we first choose $C_0$ big and then we take $\e$
sufficiently small, the image of all $q \in E_{\theta,\e}$ by
$\p^{\tilde c} \tilde \uu$ has to be contained in
$E'_{\theta,C_0\e}$ for $\theta$ small enough. Since $\p^{\tilde
c} \tilde \uu\Big(\cl{X_\ty}\Big) \subset \cl{\V_\tx}$ this will
imply
$$
\e^n\theta^{n-1} \sim \tilde \lambda \Leb{n} (E_{\theta,\e}) \leq
|\p^{\tilde c}{\tilde \uu}|(E_{\theta,\e}) \leq \Leb{n} (\V_\tx
\cap E'_{\theta,C_0\e} ) \leq C\theta^{n+1},
$$
which gives a contradiction as $\theta \to 0$, for $\e>0$ small but fixed.

Thus all we need to prove is that, if $C_0$ is big  enough, then $\p^{\tilde c} \tilde \uu(E_{\theta,\e}) \subset
E'_{\theta,C_0\e}$ for any $\e$ sufficiently small. Let
$q \in E_{\theta,\e}$ and $p \in \p^{\tilde c}\tilde \uu(q)$.
Combining
\begin{align*}
 \int_0^1 ds \int_0^1 dt \,D^2_{qp} \tilde c(sq,tp)[q,p]
=\tilde c(q,p) + \tilde c(\zero,\zero)- \tilde c(q,\zero)-c(\zero,p)
\le 0
\end{align*}
(where the last inequality is a consequence of $\tilde
c$-monotonicity of $\partial^{\tilde c}\tilde u$; see for instance
\cite[Definitions 5.1 and 5.7]{Villani09}) with
\begin{align*}
D^2_{qp} \tilde c(sq,tp)
= &D^2_{qp} \tilde c(\zero,tp) + \int_0^s ds' D^3_{qqp} \tilde c(s'q,tp)[q] \\
= &D^2_{qp} \tilde c(\zero,\zero) + \int_0^t dt' D^3_{qpp} \tilde c(\zero,t'p)[p] \\
& + \int_0^s ds' D^3_{qqp} \tilde c(s'q,\zero)[q] + \int_0^s ds' \int_0^t dt' D^4_{qqpp} \tilde c(s'q,t'p)[q,p]
\end{align*}
yields
\begin{align*}
- \langle q, p\rangle
&\le - \int_0^1 ds \int_0^1dt \int_0^s ds' \int_0^t dt' D^4_{qqpp} \tilde c(s' q,t'p)[q,q,p,p] \\
&\le C_0 |q|^2 |p|^2
\end{align*}
since $D^3_{qpp} \tilde c(\zero,t'p)$ and $D^3_{qqp} \tilde
c(s'q,\zero)$ vanish in our chosen coordinates and $-D^2_{pq}
\tilde c(\zero,\zero)$ is the identity matrix. Due to the
tensorial nature of the cross-curvature \eqref{MTW}, $C_0$ depends
on $\|c\|_{C^4(U \times V)}$ and the bi-Lipschitz constants
$\beta^\pm_c$ from \eqref{bi-Lipschitzbound}.

From the above inequality and the definition of $E_{\theta,\e}$ we deduce
$$
p_n = \langle p, \hat e_n + \frac{q}{|q|}\rangle - \langle p,
\frac{q}{|q|}\rangle \le \theta |p| + C_0 \e |p|^2
$$
so $p \in E'_{\theta,C_0\e}$ as desired.
\end{proof}

\section{The Monge-Amp\`ere measure 
dominates the $\tc$-Monge-Amp\`ere measure} 

In this section
we shall prove that --- up to constants --- the ordinary Monge-Amp\`ere measure $|\p \tu|$
dominates the $\tc$-Monge-Amp\`ere measure $|\p^\tc \tu|$, when defined in the coordinates
introduced in Theorem \ref{thm:apparently convex}. Let us begin
with a lemma which motivates our proposition heuristically. The
conclusions of the lemma extend easily from smooth to non-smooth
functions by an approximation argument combining Lemma \ref{L:cMA
properties}(b)--(c) with results of Trudinger and Wang
\cite{TrudingerWang08p}. However this approach would require the
domains $\U$ and $\V$ to be smooth,  so in Proposition \ref{P:c-ma
less ma} we prefer to construct an explicit approximation which
proves the statement we need, requires no additional smoothness
hypotheses, and is logically independent of both Lemma
\ref{lem:c-ma less ma} and \cite{TrudingerWang08p}.

\begin{lemma}\label{lem:c-ma less ma}
Assume \Bzero--\Bthree\ and
let $\tu:\cl U_\ty \longmapsto \R$ be a convex $\tc$-convex function
as in Theorem \ref{thm:apparently convex}.  If $\tu \in C^2(\U'_\ty)$
for some open set $U'_\ty \subset \U_\ty$, then
$|\p^\tc \tilde u| \le \gamma^-_\tc |\p \tilde u|$ on $\U'_\ty$, 
where $\gamma^\pm_\tc = \gamma^\pm_\tc(\U'_\ty \times \V)$ are defined as in (\ref{Jacobian bound}).
\end{lemma}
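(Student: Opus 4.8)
The statement compares two Monge-Amp\`ere-type measures at a point where $\tu$ is $C^2$, so the plan is to reduce everything to a pointwise Jacobian comparison. Recall from Theorem~\ref{thm:apparently convex} that $\p^\tc\tu(q)=\p^c\uu(x(q))$ and, more usefully here, that $\tu$ is $\tc$-convex with modified cost $\tc(q,y)=c(x(q),y)-c(x(q),\ty)$; recall also the key identity $D_y\tc(q,\ty)=-q$ from \eqref{special geodesic}. Since $\tu\in C^2(\U'_\ty)$, at each $q\in\U'_\ty$ the unique element $y(q)\in\p^\tc\tu(q)$ is characterized by $D_q\tc(q,y(q))=-D\tu(q)$ (the $\tc$-analogue of \eqref{implicit G}, available because $\tc$ inherits \Bone), and differentiating this relation gives the smooth map $q\mapsto y(q)$ on $\U'_\ty$ with
\[
D_q y(q) = -\big(D^2_{qy}\tc(q,y(q))\big)^{-1}\big[D^2\tu(q)+D^2_{qq}\tc(q,y(q))\big].
\]
The left-hand measure is $|\p^\tc\tu|(X)=\Leb n\big(\p^\tc\tu(X)\big)$, which on $\U'_\ty$ is $\int_X \big|\det D_q y(q)\big|\,dq$ by the area formula.

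First I would record the ordinary subdifferential side: because $\tu$ is convex (hypothesis \Bthree\ via Theorem~\ref{thm:apparently convex}) and $C^2$, $|\p\tu|$ has density $\det D^2\tu(q)\ge 0$ on $\U'_\ty$. Next, the crucial input is that the "potential Hessian" factor $H(q):=D^2\tu(q)+D^2_{qq}\tc(q,y(q))$ is positive semidefinite: this is exactly degenerate ellipticity of the Monge-Amp\`ere equation, which holds because $y=y(q)$ produces equality in $\tu(q)+\tu^{\tc^*}(y)+\tc(q,y)\ge 0$, so $q$ minimizes $q'\mapsto \tu(q')+\tc(q',y)$ and hence $D^2\tu(q)+D^2_{qq}\tc(q,y)\ge 0$. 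Therefore
\[
\big|\det D_q y(q)\big| = \frac{\det H(q)}{\big|\det D^2_{qy}\tc(q,y(q))\big|}.
\]
The main step is then to bound $\det H(q)$ above by $\gamma^-_\tc\,\det D^2\tu(q)$. Here is where non-negative cross-curvature enters: I claim $D^2_{qq}\tc(q,y(q))\le D^2_{qq}\tc(q,\ty)$ as symmetric matrices, because along the $\tc^*$-segment from $\ty$ to $y(q)$ (an affine segment in the $p$-variable), Theorem~\ref{T:time-convex DASM} applied in the $q$-variable — equivalently the time-convex form of \DASM, or directly \eqref{MTW} — makes $t\mapsto D^2_{qq}\tc(q,y(t))$ monotone, with the value at $\ty$ being the largest since $D^2_{qq}\tc(q,\ty)-D^2_{qq}\tc(q,y)$ accumulates the non-negative cross-curvature integrand. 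But $D_y\tc(q,\ty)=-q$ means $\tc(q,\ty)$ is affine in $q$, so $D^2_{qq}\tc(q,\ty)=0$. Hence $H(q)=D^2\tu(q)+D^2_{qq}\tc(q,y(q))\le D^2\tu(q)$, and since both are positive semidefinite, $\det H(q)\le\det D^2\tu(q)$. Combining with $\big|\det D^2_{qy}\tc\big|^{-1}\le\gamma^-_\tc$ from \eqref{Jacobian bound} gives $\big|\det D_q y(q)\big|\le\gamma^-_\tc\det D^2\tu(q)$, i.e.\ $|\p^\tc\tu|\le\gamma^-_\tc|\p\tu|$ on $\U'_\ty$.

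I expect the matrix inequality $D^2_{qq}\tc(q,y(q))\le D^2_{qq}\tc(q,\ty)$ to be the only real obstacle: it is the place where \Bthree\ is genuinely used, and it requires care to phrase the cross-curvature condition \eqref{MTW} as a statement about the second $q$-derivative of $\tc$ evolving monotonically along a $\tc^*$-segment, rather than about the scalar function $f(t)=-c(x,y(t))+c(\tx,y(t))$ in Theorem~\ref{T:time-convex DASM}. One clean way is to test against an arbitrary vector $v$: the function $t\mapsto v^{\mathsf T}D^2_{qq}\tc(q,y(t))v$, which is $-\partial_s^2|_{s=0}\tc(q+sv,y(t))$ up to the affine-in-$q$ term, is convex in $t$ by Theorem~\ref{T:time-convex DASM} (with the roles of the two variables exchanged, legitimate by Remark~\ref{rmk:symmetry}), and its $t$-derivative at $t=1$ has the right sign because $y(1)=y(q)$ lies in $\p^\tc\tu(q)$ while $y(0)=\ty$; monotonicity of a convex function's derivative then pins down the inequality. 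Everything else — the area formula, degenerate ellipticity, the bound on $|\det D^2_{qy}\tc|$ — is routine given the hypotheses and the results quoted above.
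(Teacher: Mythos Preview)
Your proof is correct and follows the same route as the paper's: reduce to a pointwise Jacobian formula using the $C^2$ hypothesis, use degenerate ellipticity to get $H(q)\ge 0$, invoke the bound $|\det D^2_{qy}\tc|^{-1}\le\gamma^-_\tc$, and compare $\det H(q)$ with $\det D^2\tu(q)$ via a matrix inequality coming from \Bthree.

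The only place you over-complicate things is the matrix inequality $D^2_{qq}\tc(q,y(q))\le 0$. You try to establish it by a monotonicity argument along a $\tc^*$-segment from $\ty$ to $y(q)$, and flag this as the real obstacle. But Theorem~\ref{thm:apparently convex}, which you already invoke, asserts under \Bthree\ that $q\mapsto -\tc(q,y)$ is convex for \emph{every} $y\in\cl\V$; hence $-D^2_{qq}\tc(q,y(q))\ge 0$ directly, with no need to interpolate in $t$ or examine derivatives at the endpoint. The paper's proof cites exactly this convexity and then concludes $\det(D^2\tu+D^2_{qq}\tc)\le\det D^2\tu$ from the monotonicity of the determinant on ordered positive semidefinite matrices (equivalently, concavity of $\det^{1/n}$). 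Your monotonicity-in-$t$ sketch as written is incomplete --- convexity of $t\mapsto v^{\mathsf T}D^2_{qq}\tc(q,y(t))v$ together with a sign on its derivative at $t=1$ does not by itself pin down the comparison between the values at $t=0$ and $t=1$ --- but the gap disappears once you replace that detour with the direct citation of convexity of $-\tc(\cdot,y)$.
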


\begin{proof}
In addition to the convexity of $\tilde u(q)$,
for any $y \in \cl \V$ Theorem \ref{thm:apparently convex}
asserts the convexity of the $\tilde c$-convex function
$q \in \cl\U_\ty \longmapsto -\tc(q,y)$.  Thus
$$
\det (D^2_{qq} \tilde u(\tq) + D^2_{qq} \tc(\tq,y)) \le \det
D^2_{qq} \tilde u(\tq)
$$
by the concavity of $S \longmapsto \det^{1/n}(S)$ on symmetric
non-negative definite matrices.
On the other hand, for $\tilde c$-convex $\tilde u \in C^2(U'_\ty)$,  the measure
$|\p^\tc \tilde u|$ is absolutely continuous, with Lebesgue density
given by the left hand side of (\ref{Monge-Ampere type equation}).
Thus at any $\tilde q \in \U'_\ty$,
\begin{equation}
\label{eq:det} \frac{d |\p^{\tilde c} \tu|}{d \Leb{n}}(\tq) =
\frac{\det(D^2_{qq} \tilde u(\tq) + D^2_{qq} c(\tq,\tilde
G(\tq)))} {|\det D^2_{qy} \tc(y,\tilde G(\tq))|} \le \gamma^-_\tc
\det D^2_{qq} \tilde u(\tq) \end{equation}
as desired.
\end{proof}

We now prove the proposition that we actually need subsequently.

\begin{proposition}[Monge-Amp\`ere measure dominates $\tc$-Monge-Amp\`ere measure]
\label{P:c-ma less ma} Assume \Bzero--\Bthree,
and let $\tu:\cl U_\ty \longmapsto \R$
be a convex $\tc$-convex function from Theorem \ref{thm:apparently convex}.
Then
$|\p^\tc \tilde u| \le \gamma^-_\tc |\p \tilde u|$ on $\U'_\ty \subset \U_\ty$, 
where $\gamma^\pm_\tc = \gamma^\pm_\tc(\U'_\ty \times \V)$ are defined as in (\ref{Jacobian bound}).
\end{proposition}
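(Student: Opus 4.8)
The plan is to bootstrap Lemma~\ref{lem:c-ma less ma} from the smooth case to the general case by an explicit mollification of the convex $\tc$-convex potential $\tu$, avoiding any smoothness assumption on the domains $\U_\ty$ or $\V$. First I would reduce matters to a local statement: fix an open set $\U'_\ty \subset \U_\ty$ and a Borel set $Z \subset \U'_\ty$; it suffices to show $|\p^\tc \tu|(Z) \le \gamma^-_\tc \, |\p \tu|(Z)$, and since $|\p^\tc \tu|$ and $|\p\tu|$ are Borel measures (Lemma~\ref{L:cMA properties}(a)) this reduces, by inner regularity, to the case of $Z$ compact and then, after shrinking $\U'_\ty$ slightly, to proving the measure inequality on a slightly larger open set with compact closure contained in $\U_\ty$.

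Next I would build the approximation. Since $\tu$ is convex and finite on $\cl\U_\ty$, take $\tu_\e := \tu * \rho_\e$, the usual mollification on a slightly shrunk open subdomain; the $\tu_\e$ are smooth, convex, and converge uniformly to $\tu$ on compact subsets. The subtle point is that $\tu_\e$ is no longer $\tc$-convex for the original modified cost $\tc$, so ``$|\p^\tc \tu_\e|$'' is not directly meaningful via Theorem~\ref{thm:apparently convex}. I would instead use the identity from the proof of Lemma~\ref{lem:c-ma less ma} in the opposite direction: for a smooth convex function $w$ on $\U'_\ty$, define the candidate density $\frac{\det(D^2_{qq} w(\tq) + D^2_{qq}\tc(\tq, \tilde G_w(\tq)))}{|\det D^2_{qy}\tc(\tq,\tilde G_w(\tq))|}$, where $\tilde G_w$ is the map associated to $w$ through the twist condition \Bone\ applied to $\tc$ (valid since $\tc$ inherits \Bone). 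By the concavity of $\det^{1/n}$ on nonnegative symmetric matrices and the convexity of $q \mapsto -\tc(q,y)$ for each fixed $y$ (Theorem~\ref{thm:apparently convex}, which only needs \Bzero--\Btwo\ and \Athreew, hence holds under \Bthree), this density is $\le \gamma^-_\tc \det D^2_{qq} w(\tq)$ pointwise. Integrating over $Z$ gives a clean inequality between two absolutely continuous measures built from $w = \tu_\e$.

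The remaining work is a passage to the limit $\e \to 0$ on both sides. For the right-hand side, $\det D^2_{qq}\tu_\e \, \Leb{n} = |\p\tu_\e|$ and $|\p \tu_\e| \rightharpoonup |\p\tu|$ weakly-$*$ by standard stability of the (ordinary) Monge-Amp\`ere measure under uniform convergence of convex functions (or one invokes Lemma~\ref{L:cMA properties}(b) applied to the cost $c(x,y) = -\langle x,y\rangle$). For the left-hand side, I would show that the absolutely continuous measures $\nu_\e$ with the candidate densities above converge weakly-$*$ to $|\p^\tc \tu|$: since $\tu_\e \to \tu$ uniformly and the $\tu_\e$ are uniformly semiconvex, $D\tu_\e \to D\tu$ $\Leb{n}$-a.e., hence $\tilde G_{\tu_\e} \to \tilde G_{\tu}$ $\Leb{n}$-a.e.\ through \eqref{implicit G} for $\tc$, and $\tilde G_{\tu}$ is the map whose pushforward of $\Leb{n}\lfloor_{\cl\V}$ (in reverse) realizes $|\p^\tc\tu|$ exactly as in the proof of Lemma~\ref{L:cMA properties}(a); a change-of-variables/Lebesgue dominated convergence argument identifies $\lim \nu_\e = |\p^\tc\tu|$ on the set where everything is defined, up to $\Leb{n}$-null sets. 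Then $|\p^\tc\tu|(Z) \le \liminf \nu_\e(\overline{Z}) \le \gamma^-_\tc \liminf |\p\tu_\e|(\overline{Z}) \le \gamma^-_\tc |\p\tu|(\overline{Z}')$ for any open $Z' \supset \overline Z$, and shrinking $Z'$ yields the claim.

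The main obstacle is the weak-$*$ convergence of the left-hand measures $\nu_\e$: one must make sure the candidate density for $\tu_\e$ really is the $\tc$-Monge-Amp\`ere measure of $\tu_\e$ \emph{in a usable sense} even though $\tu_\e$ fails to be globally $\tc$-convex, and that the a.e.\ convergence $\tilde G_{\tu_\e} \to \tilde G_{\tu}$ together with the uniform bound $\gamma^-_\tc \det D^2_{qq}\tu_\e$ — which is only bounded in $L^1$, not $L^\infty$ — suffices to pass to the limit against continuous test functions. This is handled by noting that $|\p\tu_\e|(\overline{Z}') \to |\p\tu|(\overline{Z}')$ gives tightness/uniform integrability of the dominating densities on a slightly larger set, so no mass escapes; combined with the pointwise a.e.\ convergence of the integrands this gives the desired weak-$*$ limit by a generalized dominated convergence (Vitali) argument. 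Everything else — the pointwise determinant inequality, the reduction to compact sets, and stability of the ordinary Monge-Amp\`ere measure — is routine given the results already established.
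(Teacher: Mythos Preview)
Your overall strategy---mollify, use the pointwise determinant inequality, pass to the limit---is natural, but the limiting step has a genuine gap, and the paper takes a different route precisely to avoid it.

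The central problem is your claim that $\nu_\e \rightharpoonup |\p^\tc\tu|$, for which you invoke a Vitali argument. Neither hypothesis of Vitali is available in the form you need. First, uniform integrability: you assert that ``$|\p\tu_\e|(\overline{Z}') \to |\p\tu|(\overline{Z}')$ gives tightness/uniform integrability,'' but convergence of total masses does not imply uniform integrability (think $f_\e=\e^{-n}\chi_{B_\e}$). When $|\p\tu|$ carries a singular part, the sequence $\det D^2\tu_\e$ is genuinely \emph{not} uniformly integrable, since its $L^1$ limit would then have to be $\det(D^2\tu)_{\mathrm{ac}}$, whose integral is strictly less than $|\p\tu|$ of the domain. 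Second, even granting a.e.\ convergence $D^2\tu_\e \to (D^2\tu)_{\mathrm{ac}}$ (which does hold, by differentiation of measures), the pointwise limit of your density $\rho_\e$ recovers only the \emph{absolutely continuous} part of $|\p^\tc\tu|$; any singular part is lost, and nothing in the hypotheses excludes one. There is also a secondary gap in the pointwise inequality itself: the bound $\det(D^2\tu_\e+D^2_{qq}\tc)\le \det D^2\tu_\e$ relies on monotonicity of $\det$ on the PSD cone and therefore requires $D^2\tu_\e(q)+D^2_{qq}\tc(q,\tilde G_{\tu_\e}(q))\ge 0$. That is a second-order \emph{optimality} condition, automatic at genuine local minima of $\tu_\e+\tc(\cdot,y)$ but not guaranteed by the first-order definition of $\tilde G_{\tu_\e}$; mollification destroys $\tc$-convexity, so you cannot simply invoke it.

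The paper sidesteps both issues by mollifying $\tu+\delta|q-\bar q|$ rather than $\tu$ alone. The added cone forces, for every $y_0\in\p^\tc\tu(B_{tr}(\bar q))$, the function $\tu_{\e,\delta}+\tc(\cdot,y_0)$ to attain an interior minimum at some $q_{\e,\delta}\in B_r(\bar q)$. This yields directly the set containment $\p^\tc\tu(B_{tr})\subset G_{\e,\delta}(E)$ for an explicit $E\subset B_r(\bar q)$, and at each such point the second-order condition---hence the determinant inequality---holds automatically. One then reads off $|\p^\tc\tu|(B_{tr})\le \gamma^-_\tc\,|\p\tu_{\e,\delta}|(B_r)$ and lets $\e,\delta\to 0$, $t\to 1$ using only the (robust) weak-$*$ stability of the \emph{ordinary} Monge--Amp\`ere measure; no convergence of $\tc$-type densities for non-$\tc$-convex approximants is ever needed.
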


\begin{proof}
It suffices to prove
$|\p^\tc \tilde u|\big(B_r(\bq)\big) \le \gamma^-_\tc |\p \tilde u|\big(\overline {B_r(\bq)}\big)$
for each ball whose closure is contained in $\U'_\ty$.  Given such a ball, let
$h(q) := |q -\bar q|$,  and let $\rho_\e(q) = \e^{-n}\rho(q/\e) \ge 0$ be a smooth mollifier
vanishing outside $B_\e(\zero)$ and carrying unit mass.
For $\e>0$ sufficiently small, we can define the smooth convex function
$$
\tu_{\e,\d}=(\tu+\d h)*\rho_\e
$$
on $\overline{B_r(\bq)}$.
Since $\tu$ and $h$ are locally Lipschitz, letting $R$ denote a
bound for ${\rm Lip}(\tu)+{\rm Lip}(h)$ inside $B_r(\bq)$ yields
$$
\|\tu_{\e,\d} -(\tu+\d h)\|_{L^\i(B_r(\bq))} \leq \e R
$$
for all $\d<1$.

\textit{Claim:} Fix $0<t < 1$ and $0<\d<1$.
For $\e>0$ sufficiently small, we claim that
to each $y_0 \in \p^\tc \tu(B_{t r}(\bq))$ corresponds some
$q_{\e,\d} \in B_{r}(\bq)$ such that $(q_{\e,\d},y_0) \in \p^\tc \tu_{\e,\d}$.

Indeed, for $y_0 \in \p^\tc\tu(q_0)$ with $|q_0-\bq|< t r$, observe
$$
\tu(q) \geq -c(q,y_0) -\tu^{\tc^*}(y_0) \qquad \forall\, q \in \cl U_\ty,
$$
with equality at $q_0$. Moreover $h(q_0) \leq t r$. Therefore
\begin{align*}
\tu_{\e,\d}(q_0)-(-c(q_0,y_0)-\tu^{\tc^*}(y_0))
&\leq \e R +\tu(q_0)+\d h(q_0)-(-c(q_0,y_0)-\tu^{\tc^*}(y_0))\\
&=\e R+\d h(q_0) \leq \e R + t r \d.
\end{align*}
On the other hand, if $q \in \p B_{r}(\bq)$, then $h(q) = r$, and so
\begin{align*}
\tu_{\e,\d}(q)-(-c(q,y_0) -\tu^{\tc^*}(y_0)) &\geq -\e R
+\tu(q)+\d h(q)-(-c(q,y_0) -\tu^{\tc^*}(y_0))\\ &
\geq -\e R+\d h(q) = -\e R + r \d.
\end{align*}
Thus, if for fixed $\delta$ small we choose $\e$ small enough so
that
$$
r \d-\e R > t r \d+\e R,
$$
we deduce that if we lower the graph of the function
$-c(q,y_0)-\tu^{\tc^*}(y_0)$ to the lowest level at which it
intersects the graph of $\tu_{\e,\d}$, then the point of
intersection must lie over $B_{r}(\bq)$. This proves the claim.

Having established the claim,
let $E \subset B_{r} (\bq)$ denote the (Borel) set of all
$q_{\e,\d}$ which arise from $\p \tu_{\e,\d}(B_{t r}(\bq))$ in this way.
Since $\tu_{\e,\d}$ is smooth, 
the condition $y_0 \in \p^\tc\tu_{\e,\d}(q_{\e,\d})$ implies
\begin{equation}
\label{subdiff1} D_q\tu_{\e,\d}(q_{\e,\d})=-D_q \tc(q_{\e,\d},y_0),
\end{equation}
as well as
\begin{equation}
\label{subdiff2} D_{qq}\tu_{\e,\d}(q_{\e,\d}) \geq -D_{qq} \tc(q_{\e,\d},y_0).
\end{equation}
By \eqref{subdiff1} and \Bone\, we can define a smooth map
$G_{\e,\d}(q_0)$ throughout $E$ using the relation
$$
D_q\tu_{\e,\d}(q_0)=-D_q \tc(q_0,G_{\e,\d}(q_0)),
$$
and find that $\p^\tc\tu_{\e,\d}(q_{\e,\d}) = \{G_{\e,\d}(q_{\e,\d})\}$ is a singleton.
In this way we obtain
\begin{align*}
|\p^c\tu|(B_{t r}(\bq)) \leq |\p^c\tu_{\e,\d}|(E) &=\int_E |\det D_q G_{\e,\d}|(q)\,dq\\
&=\int_E \frac{\det (D_{qq}\tu_{\e,\d} (q) +
D_{qq}\tc(q,G_{\e,\d}(q)))}{|\det D^2_{qy} \tc(q,
G_{\e,\d}(q))|}\,dq,
\end{align*}
where in the last equality we used \eqref{subdiff2} to deduce that
$D_{qq}\tu_{\e,\d}(q) +D_{qq} c(q,G_{\e,\d}(q))$ is non-negative
definite. Hence the inequality
$$
\frac{\det (D_{qq}\tu_{\e,\d} (q) +
D_{qq}\tc(q,G_{\e,\d}(q)))}{|\det D^2_{qy} \tc(q, G_{\e,\d}(q))|}
\leq\g_\tc^- \det D_{qq}\tu_{\e,\d}(q)
$$
holds (similarly to \eqref{eq:det} above), and so
$$
|\p^c\tu|(B_{t r}(\bq)) \leq \g_\tc^-\int_E \det (D_{qq}\tu_{\e,\d} (q))\,dq
 \leq \g_\tc^-|\p\tu_{\e,\d}|(B_{r}(\bq)).
$$
Letting first $\e \to 0$ and then $\d \to 0$, we finally deduce
$$
|\p^c\tu|(B_{t r}(\bq)) \leq \limsup_{\e,\d \to 0}
\g_\tc^-|\p\tu_{\e,\d}|(B_{r}(\bq)) \leq \g_\tc^-|\p\tu|\big( \overline{B_{r}(\bq)}\big).
$$
Here, to see the last inequality one may, for instance, use
Lemma~\ref{L:cMA properties}(b) with $c(x,y) = -\<x, y\>$.
Arbitrariness of $0<t<1$ yields the desired result.

\end{proof}

\section{Alexandrov type estimates and affine renormalization}\label{S:Alexandrov}
In this section we prove the key estimates for $c$-convex
potential functions which will eventually lead to the continuity
and injectivity of optimal maps. Namely, we extend Alexandrov type
estimates commonly used in the analysis of Monge-Amp\`ere
equations (thus for the cost $c(x,y)= -\<x, y\>$), to general
non-negatively curved cost functions.  This is established in
Lemma~\ref{lemma-infgeq} (plus Proposition \ref{P:c-ma less ma})
and Lemma~\ref{L:prop-infleq}. These estimates are used to compare
the infimum of $c$-convex function on a section with the size of
the section, which are the key ingredients in the proof of our
main results; see Propositions~\ref{prop:alex1} and
\ref{prop:estimate}. Lemma~\ref{L:prop-infleq} represents the most
nontrivial and technical result we obtain in this section.

We recall a basic lemma for convex sets due to Fritz John \cite{John48},
which will play an essential role in the rest of the paper.

\begin{lemma}[John's lemma]\label{L:John}
For a compact convex set $S \subset \R^n$, 
there exists an affine transformation $ L: \R^n \to \R^n$ such
that $\cl {B_1} \subset L^{-1}(S) \subset \cl {B_{n}}$.
\end{lemma}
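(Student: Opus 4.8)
The plan is to prove John's lemma by a compactness/optimization argument: among all ellipsoids contained in $S$, select one of maximal volume, and show that the concentric dilate by a factor $n$ contains $S$. First I would apply an affine map to place this maximal-volume inscribed ellipsoid at the origin as the unit ball $\cl{B_1}$; existence of such a maximizer follows because the set of ellipsoids $\{x : \langle Ax,x\rangle \le 1\}$ contained in the compact convex $S$ is parametrized by a compact set of positive-definite matrices $A$ (the constraint $E_A \subset S$ is closed, and volume $\sim (\det A)^{-1/2}$ stays bounded since $S$ is bounded and has nonempty interior — one may reduce to the case $\intr S \neq \emptyset$, the degenerate case being handled in a lower-dimensional subspace). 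With $\cl{B_1}$ the John ellipsoid, the claim reduces to showing $S \subset \cl{B_n}$.

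The key step is the containment $S \subset \cl{B_n}$. I would argue by contradiction: suppose some point $\bx \in S$ has $|\bx| > n$. By rotational symmetry assume $\bx = (\a, 0, \dots, 0)$ with $\a > n$. Since $\cl{B_1} \subset S$ and $S$ is convex, $S$ contains the convex hull of $\cl{B_1}$ and $\bx$ — a "spindle"/cone-like region. The strategy is to exhibit inside this convex hull an ellipsoid of volume strictly greater than $1$, contradicting maximality. The natural candidate is an ellipsoid with major axis along $\hat e_1$, slightly elongated toward $\bx$ and slightly contracted in the orthogonal directions: take $E_\epsilon = \{x : \frac{(x_1 - c)^2}{(1+\tau)^2} + (1+\sigma)^2\sum_{j\ge 2} x_j^2 \le 1\}$ for suitable center shift $c$ and small parameters $\tau > 0$ (elongation), $\sigma > 0$ (contraction), chosen so that $E_\epsilon \subset \co(\cl{B_1} \cup \{\bx\}) \subset S$. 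The volume of $E_\epsilon$ is $(1+\tau)(1+\sigma)^{-(n-1)}$, and one checks that for $\a > n$ the geometry permits a choice with $\tau$ of order $\epsilon$ and $\sigma$ of order $\epsilon^2$, making the volume $1 + \epsilon\,\tau_0 - O(\epsilon^2) > 1$ for small $\epsilon$.

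The main obstacle I anticipate is the explicit geometric verification that the elongated-and-contracted ellipsoid genuinely fits inside $\co(\cl{B_1} \cup \{\bx\})$: one must track how far one may push the ellipsoid's cap toward $\bx$ while keeping its "equator" inside the tangent cone from $\bx$ to $\cl{B_1}$, and this is where the bound $\a > n$ (rather than $\a > $ some other constant) enters — the cone's half-angle $\arcsin(1/\a)$ must leave enough room for the $(n-1)$-dimensional contraction to be compensated by the $1$-dimensional elongation, and the marginal computation balancing $+\tau$ against $-(n-1)\sigma$ forces the dimensional factor $n$. An alternative cleaner route, which I might prefer to present, uses the first-order optimality (Lagrange/variational) conditions satisfied by the John ellipsoid directly: if $\cl{B_1}$ is volume-maximal in $S$, then for every symmetric matrix $M$ and vector $v$ generating an infinitesimal volume-nondecreasing deformation that keeps the ellipsoid in $S$ near a contact point, a trace inequality must hold; feeding in the contact points of $\partial B_1$ with $\partial S$ (there must be enough of them, spanning $\R^n$ appropriately) yields $S \subset \cl{B_n}$ after a short linear-algebra argument. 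Either way the proof is short; I would present the contradiction-via-better-ellipsoid version as it is the most self-contained.
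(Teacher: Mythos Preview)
The paper does not actually prove this lemma: it is stated as a classical result and attributed to Fritz John \cite{John48}, with no argument given. So there is nothing to compare your proof against in the paper itself.

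That said, your proposal is correct and follows the standard route to John's theorem: take a maximal-volume inscribed ellipsoid (existence by compactness, assuming $S$ has nonempty interior), normalize it to $\cl{B_1}$ by an affine map, and then show $S \subset \cl{B_n}$ either by the explicit ``stretch toward a far point and contract transversally'' perturbation you describe, or by the first-order optimality conditions at the contact points. Both versions are classical; the perturbation argument you outline does work, and your identification of where the dimensional factor $n$ enters (balancing one direction of elongation against $n-1$ directions of contraction) is exactly right. One small caveat: the lemma as stated tacitly requires $\intr S \neq \emptyset$ (otherwise no invertible affine $L$ can satisfy $\cl{B_1} \subset L^{-1}(S)$); you note this, and for the paper's applications the sections $Z$ in question always have nonempty interior, so this is not an issue.
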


We now estimate the infimum of $\tu$ in terms of the
Monge-Amp\`ere measure in a section. The following lemma is a standard fact
for convex functions. With Lemma~\ref{L:John} in mind, we state it for normalized functions
$\tu^*$ and sections $Z^*$.  However, the estimate (\ref{inf geq}) is invariant under the affine
renormalization \eqref{renormalized solution}; according to \eqref{compare p var with p var *},
it holds with or without stars.

\begin{lemma}[Upper bound on Dirichlet solutions to Monge-Amp\`ere inequalities]
\label{lemma-infgeq} Let $\tu^*:\R^n \longmapsto \R \cup \{+\infty \}$ be a
convex function whose section $Z^*:=\{ \tu^* \le 0 \}$ satisfies
$B_1 \subset Z^* \subset \cl B_n$. Assume that $\tu^*=0$ on $\p Z^*$.
Then, for all $t \in (0,1)$,
\begin{equation}\label{inf geq}
|\p\tu^*|\left(tZ^*\right) \leq \frac{C(n)}{(1-t)^n}\frac
{|\inf_{Z^*}\tu^*|^n}{\Leb{n}(Z^*)},
\end{equation}
where $tZ^*$ denotes the dilation of $Z^*$ by a factor $t$ with
respect to the origin.
\end{lemma}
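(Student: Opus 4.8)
The plan is to use the classical convex-geometry argument, exploiting the normalization $B_1 \subset Z^* \subset \cl B_n$ so that all the dependence on the shape of the section is absorbed into dimensional constants. Set $m := |\inf_{Z^*} \tu^*| = -\inf_{Z^*} \tu^*$ (note $\tu^* \le 0$ on $Z^*$ since $\tu^* = 0$ on $\p Z^*$ and $\tu^*$ is convex, hence $m \ge 0$), and let $q_* \in Z^*$ be a point where the infimum is attained. The first step is to bound the subdifferential image $\p \tu^*(tZ^*)$. For any $q \in tZ^*$ and any $p \in \p\tu^*(q)$, I would produce an upper bound on $|p|$ by testing the subgradient inequality $\tu^*(\bar q) \ge \tu^*(q) + \langle p, \bar q - q\rangle$ at a suitably chosen boundary point $\bar q \in \p Z^*$: choosing $\bar q$ so that $\bar q - q$ points in the direction of $p$ and using $|\bar q - q| \ge (1-t)\operatorname{dist}(q,\p Z^*)$ together with $B_1 \subset Z^*$ gives $|\bar q - q| \gtrsim (1-t)$, while $\tu^*(\bar q) = 0$ and $\tu^*(q) \ge -m$ give $\langle p, \bar q - q\rangle \le m$. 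Hence $|p| \le C(n) m/(1-t)$, so $\p\tu^*(tZ^*) \subset \cl B_{C(n)m/(1-t)}$ and therefore $|\p\tu^*|(tZ^*) = \Leb{n}(\p\tu^*(tZ^*)) \le C(n)\, m^n/(1-t)^n$.

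Then I would convert this raw bound into the stated form by inserting the volume factor $\Leb{n}(Z^*)$. Since $Z^* \subset \cl B_n$, one has $\Leb{n}(Z^*) \le \Leb{n}(B_n) = \omega_n n^n$, a purely dimensional quantity; dividing and multiplying by $\Leb{n}(Z^*)$ and absorbing $\Leb{n}(B_n)$ into the constant yields
\begin{equation*}
|\p\tu^*|(tZ^*) \le \frac{C(n)}{(1-t)^n}\, m^n \le \frac{C(n)}{(1-t)^n}\,\frac{m^n}{\Leb{n}(Z^*)}\,\Leb{n}(B_n) = \frac{C(n)}{(1-t)^n}\,\frac{|\inf_{Z^*}\tu^*|^n}{\Leb{n}(Z^*)},
\end{equation*}
after renaming the constant. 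This is exactly \eqref{inf geq}. The remark preceding the lemma already records that \eqref{inf geq} is invariant under the affine renormalization \eqref{renormalized solution}, via \eqref{compare p var with p var *}: both sides scale by $|\det L|^{-1}$ under $q \mapsto Lq$, so it is legitimate to prove it only in the normalized frame.

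The main technical point to get right is the geometric lower bound $|\bar q - q| \ge c(n)(1-t)$ for the displacement to the chosen boundary point. One must check that for $q \in tZ^*$ the ray from $q$ in the direction of $p$ exits $Z^*$ only after travelling a definite distance; this uses convexity of $Z^*$ together with $B_1 \subset Z^*$ to see that $q$ has distance at least $(1-t)$ from $\p Z^*$ along every direction, since $tZ^* + (1-t)B_1 \subset Z^*$ by convexity. Everything else is a routine manipulation of the subgradient inequality and the definition \eqref{c-Monge-Ampere measure} of the Monge-Amp\`ere measure (with $c(x,y) = -\langle x,y\rangle$, so that $|\p\tu^*|(X) = \Leb{n}(\p\tu^*(X))$). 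I expect no serious obstacle here; this is the standard ABP-type estimate for convex functions and the only care needed is in tracking that all constants depend on $n$ alone.
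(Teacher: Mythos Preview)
Your proposal is correct and follows essentially the same approach as the paper: bound $|p| \le m/(1-t)$ for $p \in \p\tu^*(tZ^*)$ via the subgradient inequality tested at a boundary point in the direction of $p$ (using $tZ^* + (1-t)B_1 \subset Z^*$ to get the distance lower bound), then insert the trivial volume bound $\Leb{n}(Z^*) \le \Leb{n}(B_n)$ coming from $Z^* \subset \cl B_n$. The paper's write-up phrases the first step as taking the supremum of $\langle p, q - q^*\rangle$ over $q \in \p Z^*$, but the underlying geometric fact is the same, and in fact your justification shows the constant in the gradient bound is exactly $1$ rather than $C(n)$.
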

Although the proof of this result is classical (see for instance
\cite{Gutierrez01}), for sake of completeness we prefer to give
all the details.
\begin{proof}
We can assume that $\tu^*|_{Z^*} \not\equiv 0$, otherwise the
estimate is trivial.
It is not difficult to prove that
\begin{equation} \label{bound_subdiff_ball} |p^*|
\leq \frac{|\inf_{Z^*} \tu^*|}{(1-t)} \qquad
\forall\,p^* \in \p\tu^*(tZ^*).
\end{equation}
Indeed, if $q^* \in tZ^*$ and $p^* \in \p\tu^*(q^*)$,
then
$$
\<p^*, q-q^*\> \leq \tu(q)-\tu(q^*)=|\tu(q^*)| \qquad \forall\,q
\in \p Z^*,
$$
and taking the supremum in the left hand side among all $q \in \p Z^*$
\eqref{bound_subdiff_ball} follows.
Thus, since $\Leb{n}(Z^*) \le C(n)$, we conclude
$$
|\p\tu^*|(tZ^*)\leq
\frac{C(n)}{(1-t)^n}\frac
{|\inf_{Z^*}\tu^*|^n}{\Leb{n}(Z^*)}
.
$$
\end{proof}
Combining the above lemmas, we obtain:

\begin{proposition}
\label{prop:alex1}
Assume \Bzero--\Bthree\ and define $\gamma^\pm_\tc=\gamma^\pm_\tc(Z \times \V)$ as in (\ref{Jacobian bound}).
Any convex $\tilde c$-convex function  $\tu:\cl \U_\ty \longmapsto \R$ from Theorem \ref{thm:apparently convex}
which satisfies $|\p^c \tu| \in [\l, 1/\l]$ in a section of the form
$
Z:=\{ q  \in \cl \U_\ty | \ \tu(q) \le 0 \},
$
and $\tu = 0$ on $\partial Z$, also satisfies
\begin{equation}\label{|Z| less inf varphi}
\Leb{n}(Z)^2 \leq C(n)
\frac{\gamma^-_\tc}{\l} |\inf_{Z} \tu|^n.
\end{equation}
\end{proposition}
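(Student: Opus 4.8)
The plan is to combine Proposition~\ref{P:c-ma less ma} with Lemma~\ref{lemma-infgeq}, using John's lemma as the bridge that puts the section $Z$ into normalized position so that the classical estimate applies. First I would invoke Lemma~\ref{L:John} to produce an affine bijection $L:\R^n \to \R^n$ with $\cl{B_1} \subset L^{-1}(Z) \subset \cl{B_n}$. Then I would set $Z^* := L^{-1}(Z)$ and define the renormalized potential $\tu^*(q) = |\det L|^{-2/n}\tu(Lq)$ as in \eqref{renormalized solution}, noting $\tu^* = 0$ on $\p Z^*$ and that $\tu^*$ is convex (being an affine reparametrization of the convex function $\tu$ from Theorem~\ref{thm:apparently convex}). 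Extend $\tu^*$ to all of $\R^n$ by $+\infty$ outside $\cl\U_\ty$ so that Lemma~\ref{lemma-infgeq} applies verbatim with, say, $t=1/2$: this gives $|\p\tu^*|(\tfrac12 Z^*) \le C(n)\,|\inf_{Z^*}\tu^*|^n / \Leb{n}(Z^*)$.

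Next I would translate both the $\tc$-Monge-Amp\`ere lower bound and the final estimate back through the affine change of variables. By Proposition~\ref{P:c-ma less ma} applied on $Z$ (an open section inside $\U_\ty$, up to passing to its interior), $|\p^\tc \tu| \le \gamma^-_\tc |\p\tu|$ on $Z$, and by the affine invariance recorded in the lemma following \eqref{renormalized cost} this is equivalent to $|\p^{\tc_*}\tu^*| \le \gamma^-_\tc |\p\tu^*|$ on $Z^*$ (the constant $\gamma^-_\tc$ is unchanged since it is a sup of a pointwise quantity invariant under the renormalization). Using the hypothesis $|\p^c\tu| \ge \l$ on $Z$, which transforms to $|\p^{\tc_*}\tu^*| \ge \l |\det L|^{-1}$ on $Z^*$ via \eqref{renormalized cMA measure}, together with $\Leb{n}(Z) = |\det L|\,\Leb{n}(Z^*)$ and the scaling $\inf_{Z^*}\tu^* = |\det L|^{-2/n}\inf_Z \tu$, I would assemble the chain
\begin{align*}
\frac{\l}{\gamma^-_\tc \,|\det L|}\,\Leb{n}(Z^*)
\le |\p\tu^*|(Z^*)
\le C(n)\,\frac{|\inf_{Z^*}\tu^*|^n}{\Leb{n}(Z^*)}
= C(n)\,\frac{|\det L|^{-2}\,|\inf_Z\tu|^n}{\Leb{n}(Z^*)}.
\end{align*}
Rearranging and using $\Leb{n}(Z^*) \ge \Leb{n}(B_1)$ and $\Leb{n}(Z) = |\det L|\,\Leb{n}(Z^*)$ then yields $\Leb{n}(Z)^2 \le C(n)\,\tfrac{\gamma^-_\tc}{\l}\,|\inf_Z\tu|^n$ after absorbing dimensional constants. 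A mild point of care: Lemma~\ref{lemma-infgeq} bounds $|\p\tu^*|(tZ^*)$ rather than $|\p\tu^*|(Z^*)$; since $\tu^*$ is convex, its whole subdifferential over $Z^*$ is the increasing union over $t\uparrow 1$ of $\p\tu^*(tZ^*)$ up to a measure-zero set, so letting $t\to 1$ in \eqref{inf geq} (or simply fixing $t=1/2$ and paying a harmless factor $2^n$) recovers control of $|\p\tu^*|(Z^*)$.

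The main obstacle is bookkeeping rather than conceptual: one must be careful that Proposition~\ref{P:c-ma less ma} really does apply on the (possibly non-open, possibly boundary-touching) section $Z$, and that all the Jacobian and affine scaling factors $|\det L|^{\pm 1}$, $|\det L|^{\pm 2/n}$ combine correctly so that $L$ disappears from the final inequality as it must. In particular one should check that $\gamma^-_\tc$ is genuinely insensitive to the renormalization $\tc \mapsto \tc_*$ — this follows because $D^2_{qy}\tc_*(q,y) = |\det L|^{-2/n} L^\top\, D^2_{qy}\tc(Lq, L^*y)\, L$, whose determinant has absolute value $|\det L|^{-2}\,|\det L|^2 |\det D^2_{qy}\tc| = |\det D^2_{qy}\tc|$, so $\gamma^\pm_{\tc_*} = \gamma^\pm_\tc$. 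Once this is in hand the estimate \eqref{|Z| less inf varphi} drops out, with $C(n)$ depending only on $n$ through John's lemma and the classical bound.
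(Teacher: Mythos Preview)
Your approach is exactly the paper's --- John's lemma to normalize, Lemma~\ref{lemma-infgeq} with $t=1/2$, and Proposition~\ref{P:c-ma less ma} to pass from $|\p^{\tc}\tu|$ to $|\p\tu|$ --- but there is a bookkeeping slip that you yourself flagged as the main danger. The density lower bound does \emph{not} pick up a factor $|\det L|^{-1}$: the identity \eqref{renormalized cMA measure} is between \emph{measures} of corresponding sets, and once you also change variables in Lebesgue measure the Jacobian factors cancel, so $|\p^{\tc_*}\tu^*|\ge \l$ on $Z^*$ (this is precisely the invariance of the Monge--Amp\`ere density recorded just after the lemma containing \eqref{renormalized cMA measure}). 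With your stated bound $\l|\det L|^{-1}$, the displayed chain leaves an uncancelled $|\det L|$ in the final inequality, which is fatal since $|\det L|$ is not controlled. With the corrected bound $\l$, your chain (using $t=1/2$, so that the left side becomes $\tfrac{\l}{2^n\gamma^-_\tc}\Leb{n}(Z^*)$) gives $\Leb{n}(Z^*)^2 \le C(n)\tfrac{\gamma^-_\tc}{\l}|\det L|^{-2}|\inf_Z\tu|^n$, and then $\Leb{n}(Z)=|\det L|\,\Leb{n}(Z^*)$ delivers \eqref{|Z| less inf varphi} with all powers of $|\det L|$ cancelling.

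One more caution: your suggestion to ``let $t\to 1$'' in \eqref{inf geq} does not work, since the right side carries $(1-t)^{-n}\to\infty$; the parenthetical fix of taking $t=1/2$ is the right move (and is what the paper does). Also, you never actually need $\Leb{n}(Z^*)\ge\Leb{n}(B_1)$ --- the estimate is genuinely affinely invariant once the density error above is repaired.
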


\begin{proof}
First use the affine map $L$ as given in Lemma~\ref{L:John} to renormalize $\tu$ into $\tu^*$
using \eqref{renormalized solution}.  This does not change the bound $|\p \tu^*| \in [\l,1/\l]$,
but allows us to apply Lemma~\ref{lemma-infgeq} with $t=1/2$.
Its conclusion (\ref{inf geq}) has been expressed in a form which holds
with or without the stars, in view of \eqref{compare p var with p var *}.
Proposition~\ref{P:c-ma less ma} now yields the desired inequality \eqref{|Z| less inf varphi}.
\end{proof}

\subsection{$\tc$-cones over convex sets}

We now progress toward the Alexandrov type estimate in
Lemma~\ref{L:prop-infleq}. In this subsection we construct and
study the $\tc$-cone associated to the section of a $\tc$-convex
function. This $\tc$-cone plays an essential role in our proof of
Lemma~\ref{L:prop-infleq}.

\begin{definition}[$\tc$-cone]
\label{def:ccone}
Assume \Bzero--\Btwo\ and \Athreew, 
and let $\tu:\cl \U_\ty \longmapsto \R$ be the $\tilde c$-convex function with
convex level sets from Theorem~\ref{thm:apparently convex}.
Let $Z$ denote the section $\{ \tilde u \le 0\}$, fix $\tilde q
\in \intr Z$, and assume $\tu = 0$ on $\p Z$. The \emph{$\tc$-cone
$h^\tc: \U_\ty \longmapsto \R$ generated by $\tq$ and $Z$ with
height $-\tu(\tq)>0$} is given by
\begin{align}\label{c-cone}
h^\tc(q)  :=\sup_{y \in \cl \V}
\{ -\tc(q, y) + \tc(\tq, y) + \tu(\tq) \mid
   -\tc(q, y) + \tc(\tilde q, y) +\tu(\tq)\le 0 \text{ on } \p Z \}.
\end{align}
\end{definition}

Notice the $\tc$-cone $h^\tc$ depends only on the convex set $Z \subset \cl \U_\ty$, $\tq \in \intr Z$,
and the value $\tu(\tq)$, but is otherwise independent of $\tilde u$.
Recalling that $\tc(q, \ty) \equiv 0$ on $\U_{\ty}$, we record several key properties
of the $\tc$-cone:

\begin{lemma}[Basic properties of $\tc$-cones]
\label{lemma:propccone}
Adopting the notation and hypotheses of Definition \ref{def:ccone},
let $h^\tc:\cl \U_\tq \longmapsto \R$ be the $\tc$-cone generated by $\tilde q$ and
$Z$ with height $-\tu (\tilde q)$. Then
\begin{enumerate}
\item[(a)] $h^\tc$ has convex level sets; it is a convex function if \Bthree\ holds;
\item[(b)] $h^\tc(q) \geq h^\tc(\tilde q) = \tu(\tilde q)$ for all $q \in Z$;
\item[(c)] $h^\tc = 0 \text{ on } \p Z$;
\item[(d)] $\p^\tc h^\tc (\tilde q) \subset \p^\tc \tu (Z)$.
\end{enumerate}
\end{lemma}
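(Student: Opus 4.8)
The plan is to verify the four properties (a)--(d) essentially from the definition \eqref{c-cone} of $h^\tc$, using the results already established in Theorems \ref{T:DASM}, \ref{T:time-convex DASM} and \ref{thm:apparently convex}. Throughout, write $\phi_y(q) := -\tc(q,y) + \tc(\tq,y) + \tu(\tq)$, so that $h^\tc = \sup\{\phi_y \mid \phi_y \le 0 \text{ on } \p Z\}$ is a supremum of a subfamily of the functions $\{\phi_y\}_{y \in \cl\V}$. The key structural input is that, under \Athreew, each $q \mapsto -\tc(q,y)$ has convex level sets (indeed $\tc$-segments with respect to $\ty$ are ordinary line segments, by \eqref{special geodesic}), and under \Bthree\ each such function is in fact convex; this is exactly what the proof of Theorem \ref{thm:apparently convex} gives.

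For (a): if \Bthree\ holds, each admissible $\phi_y$ is convex (a constant minus a convex function of $q$... wait, $-\tc(q,y)$ is convex under \Bthree), hence $h^\tc$, being a sup of convex functions, is convex. Under only \Athreew, I would argue level-set convexity directly: $\{h^\tc \le t\} = \bigcap_{y \text{ admissible}} \{\phi_y \le t\}$ is an intersection of convex sets (each $\{\phi_y \le t\}$ convex by Loeper's maximum principle / Theorem \ref{T:DASM}), hence convex. For (c): every admissible $\phi_y$ satisfies $\phi_y \le 0$ on $\p Z$ by definition, so $h^\tc \le 0$ on $\p Z$; for the reverse inequality I need, for each $\bq \in \p Z$, an admissible $y$ with $\phi_y(\bq) = 0$ — this is where $\tc$-convexity of $\tu$ enters: take $y \in \p^\tc \tu(\bq)$ (nonempty since $\bq \in \cl\U_\ty$ and $\tu$ is $\tc$-convex, by Lemma \ref{L:cMA properties}(a) applied in the modified picture); then $\phi_y(q) = \tu(\tq) + [\tu(q) - (\text{the } \tc\text{-support function value})] \le \tu(q) \le 0$ on $\p Z$ (using that the support function $-\tc(\cdot,y) - \tu^{\tc^*}(y) \le \tu$ with equality at $\bq$, and $\tu = 0$ on $\p Z$), so $\phi_y$ is admissible, and $\phi_y(\bq) = \tu(\bq) + \tu(\tq) - \tu(\tq) $... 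I need to track the constant carefully: $\phi_y(\bq) = -\tc(\bq,y) + \tc(\tq,y) + \tu(\tq)$; using $-\tc(\bq,y) - \tu^{\tc^*}(y) = \tu(\bq) = 0$ and $-\tc(\tq,y) - \tu^{\tc^*}(y) \le \tu(\tq)$, i.e. $\tc(\tq,y) \ge -\tu^{\tc^*}(y) - \tu(\tq)$... this gives $\phi_y(\bq) \ge 0$, combined with admissibility $\phi_y(\bq)\le 0$ gives $=0$. Hence $h^\tc(\bq) = 0$.

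For (b): the same computation at $q = \tq$ shows that for any admissible $y$, $\phi_y(\tq) = \tu(\tq)$ exactly (the $-\tc(\tq,y)+\tc(\tq,y)$ cancels), so $h^\tc(\tq) = \tu(\tq)$; and since $h^\tc$ has convex level sets, equals $\tu(\tq) < 0$ at the interior point $\tq$ and equals $0$ on $\p Z$, the level set $\{h^\tc \le \tu(\tq) + s\}$ for small $s>0$ is a convex neighbourhood of $\tq$ inside $Z$, and more simply: for $q \in Z$, the segment (or $\tc$-segment) from $\tq$ to a boundary point through $q$ shows $h^\tc(q) \ge \tu(\tq)$ by convexity/level-set-convexity interpolating between $\tu(\tq)$ and $0$. (Under \Bthree\ this is immediate from convexity of $h^\tc$ and the boundary/vertex values.) For (d): if $y \in \p^\tc h^\tc(\tq)$, then $\phi_y$ is one of the functions realizing the sup at $\tq$ with $\phi_y(\tq) = h^\tc(\tq) = \tu(\tq)$ and $\phi_y \le h^\tc$ everywhere; since $h^\tc \le 0 \le$ ... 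I want to show $y \in \p^\tc \tu(q_1)$ for some $q_1 \in Z$. The function $q \mapsto \tu(q) - \phi_y(q) = \tu(q) + \tc(q,y) - \tc(\tq,y) - \tu(\tq)$ attains its minimum over $Z$ at some $q_1 \in \cl Z$ (it is lower semicontinuous, $Z$ compact); at $q_1$ this minimum is $\le$ its value at $\tq$, which is $0$, and on $\p Z$ we have $\tu - \phi_y \ge 0 - 0 = 0$, so the min is attained at an interior $q_1$ and equals... if the min is $0$ then $y \in \p^\tc\tu(q_1)$ and we're done; if the min were negative, then lowering $-\tc(\cdot,y) - \tu^{\tc^*}(y)$ contradicts that it lies below $\tu$. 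So $y \in \p^\tc \tu(q_1) \subset \p^\tc \tu(Z)$.

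The main obstacle is the bookkeeping of additive constants in (c) and (d) — making sure the "admissibility" constraint in \eqref{c-cone} matches up exactly with membership in $\p^\tc \tu$, and handling the degenerate \Athreew\ case (level-set convexity only) in part (d), where one cannot simply invoke convexity of $h^\tc$; there one must instead use the local-to-global principle (Corollary \ref{C:local-global}) to promote a local min of $q \mapsto \tu(q) + \tc(q,y) - \text{const}$ to a global one, ensuring the contact point $q_1$ genuinely lies in $Z$ and yields $y \in \p^\tc\tu(q_1)$.
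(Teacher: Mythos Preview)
Your argument for (c) has a genuine gap. For $\bq \in \p Z$ and $y \in \p^\tc \tu(\bq)$, you claim $\phi_y \le \tu$ on $\p Z$, but this is false in general. Writing $m(q) := -\tc(q,y) + \tc(\bq,y)$ for the supporting mountain of $\tu$ at $\bq$ (so $m \le \tu$, $m(\bq)=0$), one computes
\[
\phi_y(q) = m(q) + \bigl[\tu(\tq) - m(\tq)\bigr],
\]
and the bracket is \emph{nonnegative} (strictly positive whenever $y \notin \p^\tc\tu(\tq)$). Thus $\phi_y$ is $m$ shifted \emph{up}, and in particular $\phi_y(\bq) = \tu(\tq) - m(\tq) \ge 0$, with strict inequality unless $y$ also lies in $\p^\tc\tu(\tq)$. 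So $\phi_y$ is typically \emph{not} admissible, and your ``$\phi_y(\bq)\ge 0$ combined with admissibility gives $=0$'' is circular.

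The paper repairs exactly this: starting from $y_z \in \p^\tc\tu(z)$ at $z \in \p Z$, it slides along the $\tc$-segment $\sigma(t)$ (with respect to $z$) from $\ty$ to $y_z$ and picks, by the intermediate value theorem, the $t$ at which $-\tc(\tq,\sigma(t)) + \tc(z,\sigma(t)) = \tu(\tq)$; this forces the resulting mountain to be of the form $\phi_{\sigma(t)}$. Admissibility on all of $Z$ then comes from Loeper's maximum principle (Theorem~\ref{T:DASM}), which bounds this intermediate mountain by $\max\{m_z,0\} \le 0$ on $Z$. This use of DASM is the missing idea.

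Two smaller points. For (b) you can avoid invoking (c) and level-set convexity altogether: simply observe that $y = \ty$ is always admissible because $\phi_\ty \equiv \tu(\tq) < 0$, giving $h^\tc \ge \tu(\tq)$ immediately; this is how the paper does it. Your (a) and (d) are fine and match the paper's arguments; note that in (d), once the minimum of $\tu - \phi_y$ over $Z$ is seen to be $0$ and attained at the interior point $\tq$, the local-to-global principle (Corollary~\ref{C:local-global}) upgrades it to a global minimum on $\cl\U_\ty$, yielding $y \in \p^\tc\tu(\tq) \subset \p^\tc\tu(Z)$.
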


\begin{proof}
Property (a) is a consequence of the level-set convexity of $q
\longmapsto -\tc(q, y)$ proved in Theorem \ref{thm:apparently
convex}, or its convexity assuming \Bthree. Moreover, since
$-\tc(q,\ty)+\tc(\tilde q,\ty)+\tu(\tilde q)=\tu(\tilde q)$ for
all $q\in \U_\ty$, (b) follows. For each pair $ z \in \p Z$ and
$y_z \in \p^\tc \tu (z)$, consider the supporting mountain $m_z
(q)= -\tc(q, y_z ) + \tc(z, y_z) $, i.e. $m_z(z)=0=\tu (z)$ and
$m_z \le \tu$. Consider the $\tc$-segment $\sigma(t)$ connecting
$\sigma(0)=\ty$ and $\sigma(1)= y_z$ in $\V$ with respect to $z$.
Since $-\tc(q, \ty) \equiv 0$, by continuity there exists some $t
 \in ]0, 1]$ for which $\bar m_z (q):= -\tc(q, \sigma(t)) +
\tc(z, \sigma(t))$ satisfies $\bar m_z (\tilde q) = \tu(\tilde
q)$. From Loeper's maximum principle (Theorem \ref{T:DASM} above),
we have
$$
\bar m_z \le \max[ m_z , -\tc(\cdot, \ty)]= \max[m_z , 0],
$$
and therefore, from $m_z \le \tu$,
$$
\bar m_z \le 0 \text{ on } Z.
$$
By the construction, $\bar m_z$ is of the form
$$
-\tc(\cdot, y) + \tc(\tilde q, y) + \tu (\tilde q),
$$
and vanishes at $z$. This proves (c).
Finally (d) follows from (c)
and  the fact that $h^\tc (\tilde q ) = \tu(\tilde q)$.
Indeed, it suffices to move down the supporting mountain of $h^\tc$ at $\tilde q$
until the last moment at which it touches the graph of $\tu$ on $Z$ from below.
The conclusion then follows from Loeper's local to global principle,
Corollary \ref{C:local-global} above.
\end{proof}

The following estimate shows that the Monge-Amp\`ere measure, and
the relative location of the vertex within the section which
generates it, control the height of any well-localized $\tc$-cone.
Afficionados of the Monge-Amp\`ere theory may be less surprised by
this estimate once it is recognized that the localization in
coordinates ensures the cost is approximately affine, at least in
one of its two variables. Still, it is vital that the
approximation be controlled! Together with
Lemma~\ref{lemma:propccone}(d), this proposition plays a key role
in the proof of our Alexandrov type estimate
(Lemma~\ref{L:prop-infleq}).

\begin{proposition}
[Lower bound on the Monge-Amp\`ere measure of a small $\tc$-cone]
\label{P:partial c-cone} Assume \Bzero--\Bthree\ and define
$\tilde c \in C^3\big(\cl\U_\ty \times \cl\V\big)$ as in
Definition \ref{D:cost exponential}. Let  $Z \subset \cl \U_\ty$
be a closed convex set and $h^\tc$ the $\tc$-cone generated by
$\tq \in \intr Z$ of height $-h^\tc(\tq)>0$ over $Z$. Let
$\Pi^+,\Pi^-$ be two parallel hyperplanes contained in $T^*_\ty \V
\setminus Z$ and touching $\p Z$ from two opposite sides. Then
there exists $\e_c>0$ small, depending only on the cost (and given
by Lemma~\ref{L:c estimate}), and a constant $C(n)>0$ depending
only on dimension, such that if $\diam(Z) \leq \e_c/C(n)$ then
\begin{align}\label{p c-cone lower}
|h^\tc (\tilde q)|^n \leq C(n) \frac{\min\{\dist(\tilde
q,\Pi^+),\dist(\tilde q,\Pi^-)\}}{\ell_{\Pi^+}}|\p h^\tc|(\{\tilde
q\}) \Leb{n}(Z),
\end{align}
where $\ell_{\Pi^+}$ denotes the maximal length among all
the segments obtained by intersecting $Z$ with a line orthogonal
to $\Pi^+$.
\end{proposition}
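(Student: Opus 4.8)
The plan is to reduce the statement, by an affine renormalization, to the model situation in which $Z$ is comparable to a Euclidean ball, and then to exploit the fact that on a set of small diameter the modified cost $\tc(q,y)$ is, in the $q$-variable, a small $C^2$-perturbation of a linear function (this is precisely the content of the auxiliary estimate $\text{Lemma~\ref{L:c estimate}}$ cited in the statement, which supplies the threshold $\e_c$). First I would apply John's lemma (Lemma~\ref{L:John}) to produce an affine bijection $L$ with $\cl B_1 \subset L^{-1}(Z) \subset \cl B_n$, and pass to the renormalized cone $h^{\tc_*}_*$ and renormalized cost $\tc_*$ as in \eqref{renormalized solution}--\eqref{renormalized cost}. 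Since $h^\tc(\tq)$ is a height (a difference of values of $\tc$-type expressions), it scales by $|\det L|^{-2/n}$; $|\p h^\tc|(\{\tq\})$ scales by $|\det L|^{-1}$ as in \eqref{compare p var with p var *}; $\Leb n(Z)$ scales by $|\det L|$; and the ratio $\min\{\dist(\tq,\Pi^\pm)\}/\ell_{\Pi^+}$ is a ratio of lengths measured along the \emph{same} direction, hence is an affine invariant of the pair $(Z,\tq)$. So \eqref{p c-cone lower} is invariant under renormalization, and it suffices to prove it when $\cl B_1 \subset Z \subset \cl B_n$, at the cost of now only knowing $\diam(Z)\le \e_c/C(n)$ is preserved up to the John factors — this is harmless because smallness of $\diam(Z)$ is exactly what lets us invoke Lemma~\ref{L:c estimate}, and the perturbative constant there can be absorbed.

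In the normalized picture, the key geometric step is to build a genuine (Euclidean) cone $V$ over $Z$ with vertex at $(\tq, h^\tc(\tq))$ and base $Z$ at height $0$, and compare its ordinary subdifferential with that of $h^\tc$. Because on $Z\times\cl\V$ the function $q\mapsto -\tc(q,y)$ differs from an affine function by a term of size $O(\diam(Z)^2)$ in $C^2$ (Lemma~\ref{L:c estimate}), the convex function $h^\tc$ lies within $O(\diam(Z)^2)$ of the Euclidean cone $V$ on $Z$ and has comparable slopes; hence $\p h^\tc(\{\tq\})$ contains, up to a dimensional factor, the Euclidean subdifferential $\p V(\{\tq\})$ of the true cone at its vertex. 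For a true cone of height $|h^\tc(\tq)|$ over the convex base $Z$ with vertex $\tq$, the subdifferential at the vertex is exactly the polar-type convex body whose Lebesgue measure one computes directly: $\Leb n(\p V(\{\tq\}))$ is comparable to $|h^\tc(\tq)|^n$ divided by the product of the "widths" of $Z$ as seen from $\tq$ in the $n$ coordinate directions. Choosing those directions so that one of them is orthogonal to $\Pi^+$ isolates the factor $\ell_{\Pi^+}$ in the denominator and $\min\{\dist(\tq,\Pi^+),\dist(\tq,\Pi^-)\}$ as the "distance from the vertex to the relevant face" appearing in the numerator, while the remaining $n-1$ widths are all $\gtrsim 1$ and $\lesssim \diam(Z)$ after normalization, and combine with $\Leb n(Z)$ to close the inequality. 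Assembling: $|h^\tc(\tq)|^n \lesssim \Leb n(\p V(\{\tq\}))\cdot \ell_{\Pi^+}\cdot(\text{product of other widths}) \lesssim |\p h^\tc|(\{\tq\})\cdot \ell_{\Pi^+}\cdot \Leb n(Z) \cdot \frac{1}{\min\{\dist(\tq,\Pi^\pm)\}}$ — wait, one must be careful with which length sits where; the correct bookkeeping puts $\min\{\dist(\tq,\Pi^+),\dist(\tq,\Pi^-)\}$ in the numerator and $\ell_{\Pi^+}$ in the denominator, exactly \eqref{p c-cone lower}, because the base direction orthogonal to $\Pi^\pm$ contributes $\dist(\tq,\Pi^\pm)$ to the vertex-subdifferential volume but the \emph{full} width $\ell_{\Pi^+}$ to the slope normalization of the cone.

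The step I expect to be the main obstacle is the comparison between $\p h^\tc(\{\tq\})$ and the Euclidean vertex-subdifferential $\p V(\{\tq\})$: $h^\tc$ is a $\tc$-cone, defined as a supremum of $\tc$-type mountains pinned on $\p Z$, not a Euclidean cone, and although Theorem~\ref{thm:apparently convex} and Lemma~\ref{lemma:propccone}(a) guarantee it is convex and vanishes on $\p Z$, controlling its subgradient at the vertex requires quantitative control of how far each pinned mountain $-\tc(\cdot,y)+\text{const}$ deviates from its linearization on a set of diameter $\le\e_c/C(n)$. This is where Lemma~\ref{L:c estimate} must be invoked with care: one needs not just that $\tc(\cdot,y)$ is $C^2$-close to affine, but that this closeness is uniform in $y\in\cl\V$ and quantitative enough that the error in the cone's height and slopes is a small fraction of $|h^\tc(\tq)|$, which forces the restriction $\diam(Z)\le\e_c/C(n)$ with an explicitly dimensional $C(n)$. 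Once that perturbation is controlled — so that $h^\tc$ is sandwiched between two Euclidean cones of comparable height over $Z$ with comparable vertex slopes — the rest is the classical Alexandrov cone computation (as in \cite{Gutierrez01}) carried out in the normalized coordinates, and then undone by the affine invariance established in the first paragraph.
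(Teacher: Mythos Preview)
Your renormalization step contains a genuine gap, and it is precisely where the paper's approach diverges from yours.

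First, the affine invariance you claim for the ratio $\min\{\dist(\tq,\Pi^\pm)\}/\ell_{\Pi^+}$ is false. While $\dist(\tq,\Pi^+)/\dist(\Pi^+,\Pi^-)$ is an affine invariant (a barycentric-type ratio), the quantity $\ell_{\Pi^+}$ --- the maximal \emph{orthogonal} chord of $Z$ --- can be strictly smaller than $\dist(\Pi^+,\Pi^-)$ (think of a thin oblique triangle), and orthogonality of chords to a hyperplane is not preserved by a general affine map. The paper remarks explicitly, just after Proposition~\ref{prop:estimate}, that estimate \eqref{vardist} (the application of \eqref{p c-cone lower}) ``is not stable under affine renormalization (a line orthogonal to $\Pi^+$ is not an affinely invariant concept).'' So the reduction to a round $Z$ is not available.

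Second, and more fatally, renormalizing so that $B_1\subset L^{-1}(Z)\subset B_n$ forces $\diam(L^{-1}(Z))\sim n$, destroying the small-diameter hypothesis. Your remark that ``$\diam(Z)\le\e_c/C(n)$ is preserved up to the John factors'' is not correct: the John map is exactly the one that blows the diameter up to order $n$. You would then have to apply Lemma~\ref{L:c estimate} to the renormalized cost $\tc_*$, whose constant $\e_{c_*}$ depends on third derivatives of $\tc_*$ and hence on $L$; for highly eccentric $Z$ this constant degenerates, so the perturbative comparison with the Euclidean cone cannot be made uniform.

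The paper avoids renormalization entirely. It works in the original small set $Z$ and \emph{directly constructs} $n$ elements of $\partial h^\tc(\{\tq\})$: choose $n$ mutually orthogonal supporting hyperplanes $\Pi^1,\dots,\Pi^n$ of $Z$ (with $\Pi^1\in\{\Pi^+,\Pi^-\}$), touching $Z$ at points $q^i$. For each $i$ there is $y_i\in\partial^\tc h^\tc(q^i)$ with $-D_q\tc(q^i,y_i)$ normal to $\Pi^i$; sliding $y_i$ along the $\tc$-segment toward $\ty$ produces $y_i(t_i)\in\partial^\tc h^\tc(\tq)\cap\partial^\tc h^\tc(q^i)$ (the same sliding-mountain argument as in Lemma~\ref{lemma:propccone}(c)). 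Convexity of $h^\tc$ gives $|-D_q\tc(q^i,y_i(t_i))|\ge |h^\tc(\tq)|/\dist(\tq,\Pi^i)$. Lemma~\ref{L:c estimate} is then used \emph{only} to compare $-D_q\tc(\tq,y_i(t_i))\in\partial h^\tc(\tq)$ with $-D_q\tc(q^i,y_i(t_i))$; the smallness $\diam(Z)\le\e_c/C(n)$ makes these $n$ vectors nearly mutually orthogonal, so their convex hull inside $\partial h^\tc(\tq)$ has volume $\gtrsim\prod_i |h^\tc(\tq)|/\dist(\tq,\Pi^i)$. A separate convex-geometry lemma (Lemma~\ref{lemma:orthogonal sections}) then bounds $\prod_{i\ge 2}\dist(\tq,\Pi^i)$ by $C(n)\Leb{n}(Z)/\ell_{\Pi^+}$, which closes \eqref{p c-cone lower}. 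The key idea you are missing is this direct production of subgradients at $\tq$ via $\tc$-segments from boundary touching points, rather than a global comparison with a Euclidean cone.
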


\noindent To prove this, we first observe a basic estimate on the  cost function $c$.
\begin{lemma}\label{L:c estimate}
Assume \Bzero--\Btwo. For $\tilde c \in C^3\big(\cl \U_\ty \times \cl\V\big)$ from Definition
\ref{D:cost exponential} and each $y \in \cl \V$ and $q,\tilde q \in \cl \U_\ty$,
\begin{align}\label{slope compare}
|-D_q \tc(q,y)+D_q\tc(\tilde q,y)|
&\le  \frac{1}{\e_c}  | q - \tilde q|\,|D_q \tc(\tilde q,y)|
\end{align}
where $\e_c$ is given by
$ 
\e_c^{-1} = 2 (\beta^+_c)^4 (\beta^-_c)^6 \| D^3_{xxy}c \|_{L^\infty(\U \times \V)}
$ 
in the notation \eqref{bi-Lipschitzbound}.
\end{lemma}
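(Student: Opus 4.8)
The plan is to prove the estimate in a strengthened pointwise form and then integrate. Throughout write $x(q)$ for the inverse of $q=-D_yc(x,\ty)$, set $A(q):=D_qx(q)$ (which by \Bone\ is, up to sign and transpose, the inverse of $D^2_{xy}c$ at $(x(q),\ty)$, so $\|A(q)\|\le\beta^-_c$ and $\|A(q)^{-1}\|\le\beta^+_c$), and let $w(q,y):=D_xc(x(q),y)-D_xc(x(q),\ty)$. Differentiating $\tc(q,y)=c(x(q),y)-c(x(q),\ty)$ gives $D_q\tc(q,y)=w(q,y)\,A(q)$, whence $(\beta^+_c)^{-1}|w(q,y)|\le|D_q\tc(q,y)|\le\beta^-_c|w(q,y)|$; a second differentiation yields a formula of the schematic shape
$$D^2_{qq}\tc(q,y)=A(q)^{T}\bigl[D^2_{xx}c(x(q),y)-D^2_{xx}c(x(q),\ty)\bigr]A(q)+w(q,y)\,D_qA(q),$$
where $\|D_qA(q)\|$ is bounded by $(\beta^+_c)^2\beta^-_c\,\|D^3_{xxy}c\|_{L^\infty}$ upon differentiating $A(q)^{-1}=-D^2_{yx}c(x(q),\ty)$ and applying the chain rule.

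The heart of the matter is the pointwise bound
$$\bigl\|D^2_{xx}c(x,y)-D^2_{xx}c(x,\ty)\bigr\|\le\beta^-_c\,\|D^3_{xxy}c\|_{L^\infty(\U\times\V)}\,\bigl|D_xc(x,y)-D_xc(x,\ty)\bigr|$$
for all $x\in\cl\U$, $y\in\cl\V$. One cannot simply interpolate along the straight segment from $\ty$ to $y$, because the length of that segment is not controlled by $|D_xc(x,y)-D_xc(x,\ty)|$ (an average of the invertible matrices $D^2_{xy}c(x,\cdot)$ need not be invertible). Instead I would connect $\ty$ to $y$ by the $\cs$-segment $t\in[0,1]\mapsto y_x(t)$ with respect to $x$, which lies in $\cl\V$ thanks to the bi-convexity hypothesis \Btwo: along it $-D_xc(x,y_x(t))$ is affine, so $D_xc(x,y_x(t))-D_xc(x,\ty)=t\bigl(D_xc(x,y)-D_xc(x,\ty)\bigr)$ and therefore $|\dot y_x(t)|\le\beta^-_c\,|D_xc(x,y)-D_xc(x,\ty)|$ for every $t$; integrating $\tfrac{d}{dt}D^2_{xx}c(x,y_x(t))=D^3_{xxy}c(x,y_x(t))[\dot y_x(t)]$ from $0$ to $1$ gives the claim. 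Feeding this, the bound on $\|D_qA\|$, the norm bounds on $A$, and $|w(q,y)|\le\beta^+_c|D_q\tc(q,y)|$ into the displayed formula for $D^2_{qq}\tc$, and collecting $\beta^\pm_c$ factors using $\beta^+_c\beta^-_c\ge1$, produces a pointwise inequality of the form $\|D^2_{qq}\tc(q,y)\|\le\tfrac{1}{2\e_c}\,|D_q\tc(q,y)|$ with $\e_c$ as in the statement.

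Finally I would integrate along the segment $q_s:=\tilde q+s(q-\tilde q)\subset\cl\U_\ty$: from $D_q\tc(q,y)-D_q\tc(\tilde q,y)=\int_0^1 D^2_{qq}\tc(q_s,y)[q-\tilde q]\,ds$ and the pointwise estimate, the scalar $s\mapsto|D_q\tc(q_s,y)|$ obeys a linear differential inequality with coefficient $\le\tfrac{|q-\tilde q|}{2\e_c}$, so a Gr\"onwall argument gives $|D_q\tc(q_s,y)|\le2\,|D_q\tc(\tilde q,y)|$ throughout $s\in[0,1]$ (this is the source of the factor $2$ in $\e_c^{-1}$); substituting back into the integral yields $|D_q\tc(q,y)-D_q\tc(\tilde q,y)|\le\tfrac1{\e_c}|q-\tilde q|\,|D_q\tc(\tilde q,y)|$, noting that $D_q\tc(\tilde q,y)\neq0$ for $y\neq\ty$ by \Bone\ (and the case $y=\ty$ is trivial since $\tc(\cdot,\ty)\equiv0$). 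I expect the main obstacle to be the pointwise control of $D^2_{xx}c(x,y)-D^2_{xx}c(x,\ty)$: one must avoid bounding its variation by $\|c\|_{C^4}$ times a naive $|y-\ty|$ --- both because the modified cost is only $C^3$ and because such a constant would be uncontrolled --- and the right device is exactly the reparametrization of the connecting path as a $\cs$-segment, after which every constant is accounted for by $\beta^\pm_c$ and $\|D^3_{xxy}c\|_{L^\infty}$ alone.
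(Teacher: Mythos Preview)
Your pointwise estimate $\|D^2_{qq}\tilde c(q,y)\|\lesssim |D_q\tilde c(q,y)|$ is correct, but the Gr\"onwall step does not deliver the lemma as stated. From the differential inequality $|\phi'(s)|\le K\phi(s)$ with $K=|q-\tilde q|/(2\e_c)$ and $\phi(s)=|D_q\tilde c(q_s,y)|$, Gr\"onwall yields only $\phi(s)\le\phi(0)e^{Ks}$, and $e^K\le 2$ requires $|q-\tilde q|\le 2\e_c\ln 2$. The lemma, however, is asserted for \emph{all} $q,\tilde q\in\cl\U_{\ty}$, whose diameter need not be small relative to $\e_c$ (indeed $\e_c$ shrinks as $\|D^3_{xxy}c\|$ grows); your argument therefore produces $(e^{|q-\tilde q|/(2\e_c)}-1)\,|D_q\tilde c(\tilde q,y)|$ on the right-hand side rather than the linear $\tfrac{1}{\e_c}|q-\tilde q|\,|D_q\tilde c(\tilde q,y)|$. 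The claimed ``factor $2$'' is not what Gr\"onwall gives.

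The paper's proof avoids any iteration by exploiting $\tilde c(\cdot,\ty)\equiv 0$ to rewrite the first difference as a genuine double increment, $-D_q\tilde c(\tilde q,y)+D_q\tilde c(q,y)+D_q\tilde c(\tilde q,\ty)-D_q\tilde c(q,\ty)$, and then applies the mean value theorem in both $q$ and in the $\tilde c$-exponential coordinate $p=-D_q\tilde c(\tilde q,y)$ anchored at the \emph{fixed} point $\tilde q$; the $p$-increment has length exactly $|D_q\tilde c(\tilde q,y)|$, so the linear bound in $|D_q\tilde c(\tilde q,y)|$ falls out directly. Your approach can be repaired in the same spirit: take the $c^*$-segment $y_{\tilde x}(t)$ with respect to the fixed base point $\tilde x=x(\tilde q)$ rather than the moving $x(q)$. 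Then $|\dot y_{\tilde x}(t)|\le\beta^-_c\,|w(\tilde q,y)|$, whence both $\|D^2_{xx}c(x,y)-D^2_{xx}c(x,\ty)\|$ and $|w(q,y)|$ are bounded by constants times $|D_q\tilde c(\tilde q,y)|$ uniformly in $q$ along the segment, and the integration over $q_s$ yields the lemma with no Gr\"onwall at all.
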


\begin{proof}
For fixed $\tq \in \cl \U_\ty$ introduce the $\tilde
c$-exponential coordinates $p(y) = - D_q \tilde c(\tq, y)$. The
bi-Lipschitz constants \eqref{bi-Lipschitzbound} of this
coordinate change are estimated by $\beta^\pm_\tc \le \beta^+_c
\beta^-_c$ as in Corollary~\ref{C:Jacobian transform}.
Thus
\begin{align*}
\dist(y,\ty)
&\leq \beta^-_\tc |-D_q \tc(\tq, y) + D_q \tc(\tq, \ty )|\\
&=   \beta_c^+ \beta_c^-  |D_q \tc(\tilde q, y)|.
\end{align*}
where $\tc(q,\ty) \equiv 0$ from Definition \ref{D:cost
exponential} has been used. Similarly, noting the convexity \Btwo\
of $\V_\tq :=p(\V)$,
\begin{align*}
|-D_q \tc(\tilde q,y)+D_q\tc(q,y)|
&=|-D_q\tc(\tilde q,y)+D_q\tc(q,y)+D_q\tc(\tilde q,\ty)-D_q\tc(q,\ty)|
\\&\leq \|D^2_{qq} D_p \tc\|_{L^\i(\U_\ty \times \tilde \V_\tq)}  |\tq - q| |p(y)-p(\ty)|
\\&\leq \|D^2_{qq} D_y \tc\|_{L^\i(\U_\ty \times \V)} (\beta^-_c \beta^+_c)^2|\tq - q| \dist(y,\ty)
\end{align*}
The result follows since $|D^2_{qq} D_y \tc| \le( (\beta^-_c)^2 +
\beta^+_c (\beta^-_c)^3))  |D^2_{xx} D_y c| \le 2 \beta^+_c
(\beta^-_c)^3 |D^2_{xx} D_y c| $ . (The last inequality
follows from $\beta^{+}_{c}\beta^-_c \ge 1$.)
\end{proof}

\begin{proof}[Proof of Proposition~\ref{P:partial c-cone}]
We fix $\tilde q \in Z$.
Let $\Pi^i$, $i=1, \cdots n$, (with $\Pi^1$ equal either $\Pi^+ $ or $\Pi^-$)  be hyperplanes contained in $\R^n
\setminus Z$, touching $\p Z$, and such that
$\{\Pi^+,\Pi^2,\ldots,\Pi^n\}$ are all mutually orthogonal (so
that also $\{\Pi^-,\Pi^2,\ldots,\Pi^n\}$ are mutually
orthogonal).
Moreover we choose $\{\Pi^2,\ldots,\Pi^n\}$ in such a way that,
if $\pi^1(Z)$ denotes the projection of $Z$ on $\Pi^1$
and $\Haus{n-1}(\pi^1(Z))$ denotes its $(n-1)$-dimensional Hausdorff measure, then
\begin{equation}
\label{eq:choice hyperpl}
C(n)\Haus{n-1}(\pi^1(Z))\geq \prod_{i=2}^n \dist(\tilde
q,\Pi^i),
\end{equation}
for some universal constant $C(n)$. Indeed, as $\pi^1(Z)$ is
convex, by Lemma~\ref{L:John} we can find an ellipsoid $E$ such
that $E \subset \pi^1(Z) \subset (n-1)E$, and for instance we can
choose $\{\Pi^2,\ldots,\Pi^n\}$ among the hyperplanes orthogonal
to the axes of the ellipsoid (for each axis we have two possible
hyperplanes, and we can always choose the furthest one so that
\eqref{eq:choice hyperpl} holds).

Each hyperplane $\Pi^i $ touches $Z$ from outside, say at $q^i \in
T^*_\ty \V$. Let $p_i \in T_\ty V$ be the outward (from $Z$) unit
vector at $q^i$ orthogonal to $\Pi^i$. Then $s_i p_i \in \p h^\tc
(q^i)$ for some $s_i >0$, and by Corollary \ref{C:local-global}
there exists $y_i \in \p^\tc h^\tc(q^i)$ such that $$ -D_q \tc(
q^i , y_i) = s_i p_i .
$$
Define $y_i(t)$ as
$$
-D_q \tc(q^i, y_i (t) ) = t\, s_i p_i,
$$
i.e. $y_i(t)$ is the $\tc$-segment from $\ty$ to $y_i$ with respect to $q^i$.
As in the proof of Lemma~\ref{lemma:propccone} (c), the intermediate value theorem yields 
$0 <t_i \le 1$ such that
$$
-\tc(\cdot , y_i (t_i) ) + \tc(\tilde q, y_i (t_i)) + h^\tc (\tilde q) \le 0 \text{ on $Z$}
$$
with equality at $q^i$. Thus, by the definition of $h^\tc$, $y_i (t_i) \in \p^\tc h^\tc (\tilde  q) \cap \p^\tc h^\tc (q^i)$,
$$
-D_q \tc (\tilde q, y_i (t_i)) \in \p h^\tc (\tilde q)
\quad \text{and} \quad t_i s_i p_i =-D_q \tc (q^i, y_i (t_i)) \in \p h^\tc (q^i).
$$
\begin{figure}
\begin{center}
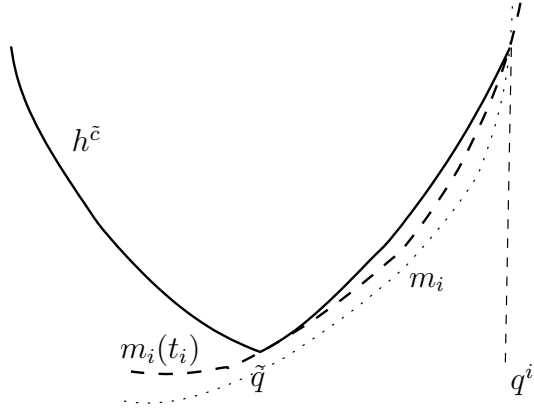\caption{{\small The dotted line represents the graph of $m_i:=-\tc(\cdot , y_i ) + \tc(\tilde q, y_i) + h^\tc (\tilde q)$,
while the dashed one represents the graph of $m_i(t_i):=-\tc(\cdot
, y_i(t_i) ) + \tc(\tilde q, y_i(t_i)) + h^\tc (\tilde q)$. The
idea is that, whenever we have $m_i$ a supporting function for
$h^\tc$ at a point $q^i \in \p Z$, we can let $y$ vary
continuously along the $\tc$-segment from $\ty$ to $y_i$ with
respect to $q^i$, to obtain a supporting function $m_i(t_i)$ which
touches $h^\tc$ also at $\ty$.}}\label{figc-cone}
\end{center}
\end{figure}
Therefore by the convexity of $h^\tc$ shown in Lemma
\ref{lemma:propccone}(a), the affine function $P^i$ with slope
$-D_q \tc (q^i, y_i (t_i))$ and with $P^i (\Pi^i) \equiv 0$
satisfies $P^i (\tilde q ) \le h^\tc (\tilde q)$. This shows
\begin{align}\label{D c at z i lower}
| -D_q \tc (q^i, y_i (t_i))| \ge \frac{|h^\tc  (\tilde q)|}{\dist (\tilde q, \Pi^i)}.
\end{align}
Also, by \eqref{slope compare}
\begin{align*}
|-D_q \tc(\tilde q, y_i (t_i) ) + D_q \tc(q^i , y_i (t_i)) |
& \le \frac{1}{\e_c} |\tilde q - q^i | \, |-D_q \tc(\tq , y_i (t_i)|\\
&\le  \frac{1}{\e_c} {\rm diam}\, Z \, |-D_q \tc(\tq , y_i (t_i)|.
\end{align*}
Therefore if ${\rm diam} \, Z \le \d_n\e_c $ with $\d_n>0$ small, 
each vector
$-D_q \tc( \tilde q, y_i(t_i))$ is close to $-D_q \tc(q^i, y_i (t_i))$, say
$$
|-D_q \tc(\tilde q, y_i (t_i) ) + D_q \tc(q^i , y_i (t_i)) | \leq \delta_n|-D_q \tc(\tq , y_i (t_i))|.
$$
Since the vectors $\{-D_q \tc(q^i, y_i (t_i))\}$ are mutually orthogonal, the above estimate implies that
for $\d_n$ small enough the convex hull of $\{-D_q \tc(\tilde q, y_i (t_i))\}$ has measure of order
$\prod_{i=1}^n|-D_q \tc(q^i , y_i (t_i))|$.
Thus, by the lower bound
\eqref{D c at z i lower} and the convexity of $\p h^\tc (\tilde q)$, we obtain
\begin{align*}
\Leb{n}(\p h^\tc (\tilde q)) & \ge C(n) \frac{|h^\tc (\tilde q)|^n}{\prod_{i=1}^n \dist(\tilde q, \Pi^i)}.
\end{align*}
Since $\Pi^1$ was either $\Pi^+$ or $\Pi^-$, we have proved that
$$
|h^\tc(\tilde q)|^n \leq C(n) \min\{\dist(\tilde
q,\Pi^+),\dist(\tilde q,\Pi^-)\} \prod_{i=2}^n \dist(\tilde
q,\Pi^i)|\p h^\tc|(\{\tilde q\}).
$$
To conclude the proof, we apply Lemma~\ref{lemma:orthogonal sections} below
with $Z'$ given by the segment obtained intersecting
$Z$ with a line orthogonal to $\Pi^+$. 
Combining
that lemma with \eqref{eq:choice hyperpl}, we obtain
$$
C(n)|Z| \geq \ell_{\Pi^+} \prod_{i=2}^n \dist(\tilde q,\Pi^i),
$$
and last two inequalities prove the proposition (taking $C(n) \ge 1/\delta_n$ larger if necessary).
\end{proof}

\begin{lemma}[Estimating a convex volume using one slice and an orthogonal projection]
\label{lemma:orthogonal sections}
Let $Z$ be a convex set in $\R^n = \R^{n'} \times \R^{n''}$.  Let
$\pi', \pi''$ denote the projections to the components $\R^{n'}$,
$\R^{n''} $, respectively.
Let $Z'$ be a slice orthogonal to the second component, that is
$$
Z' = (\pi'')^{-1}(\bar x'')\cap  Z \qquad\text{for some }\bar x'' \in \pi''(Z).
$$
Then there exists a constant $C(n)$, depending only on $n=n'+n''$,
such that
$$
C(n) \Leb{n}(Z) \ge \Haus{n'}(Z') \Haus{n''}(\pi'' (Z)),
$$
where $\Haus{d}$ denotes the $d$-dimensional Hausdorff measure.
\end{lemma}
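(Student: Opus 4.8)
The plan is to slice $Z$ along the second factor and combine Fubini's theorem with the Brunn--Minkowski inequality. Write $K:=\pi''(Z)\subset\R^{n''}$, and for $x''\in K$ put $Z_{x''}:=\{x'\in\R^{n'}\mid (x',x'')\in Z\}$, so that $Z'=Z_{\bar x''}$ and Fubini gives $\Leb{n}(Z)=\int_{K}\Haus{n'}(Z_{x''})\,d\Haus{n''}(x'')$. We may clearly assume $\Haus{n'}(Z')>0$ and $\Haus{n''}(K)>0$, the remaining cases being trivial. The key point is that $g(x''):=\Haus{n'}(Z_{x''})^{1/n'}$ is \emph{concave} on the convex set $K$: convexity of $Z$ yields the Minkowski-sum inclusion $Z_{(1-\lambda)x''_0+\lambda x''_1}\supseteq (1-\lambda)Z_{x''_0}+\lambda Z_{x''_1}$ for $\lambda\in[0,1]$, and the Brunn--Minkowski inequality in $\R^{n'}$ upgrades it to $g((1-\lambda)x''_0+\lambda x''_1)\ge (1-\lambda)g(x''_0)+\lambda g(x''_1)$.

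Since $g\ge 0$ and $g(\bar x'')=\Haus{n'}(Z')^{1/n'}$, concavity along each ray emanating from $\bar x''$ gives
\begin{equation*}
g(\bar x''+r\theta)\ \ge\ \Haus{n'}(Z')^{1/n'}\Bigl(1-\frac{r}{\rho(\theta)}\Bigr),\qquad 0\le r< \rho(\theta):=\sup\{r\ge 0\mid \bar x''+r\theta\in K\},
\end{equation*}
for every $\theta\in S^{n''-1}$. I would then integrate in polar coordinates centred at $\bar x''$: as $K$ is star-shaped with respect to $\bar x''\in K$, one has $\Haus{n''}(K)=\tfrac1{n''}\int_{S^{n''-1}}\rho(\theta)^{n''}\,d\theta$ ($d\theta$ the surface measure on $S^{n''-1}$), and substituting $r=\rho(\theta)u$,
\begin{align*}
\Leb{n}(Z)
&=\int_{S^{n''-1}}\!\!\int_0^{\rho(\theta)}g(\bar x''+r\theta)^{n'}\,r^{n''-1}\,dr\,d\theta
\ \ge\ \Haus{n'}(Z')\int_{S^{n''-1}}\!\!\int_0^{\rho(\theta)}\Bigl(1-\tfrac{r}{\rho(\theta)}\Bigr)^{n'}r^{n''-1}\,dr\,d\theta\\
&=\Haus{n'}(Z')\Bigl(\int_0^1(1-u)^{n'}u^{n''-1}\,du\Bigr)\int_{S^{n''-1}}\rho(\theta)^{n''}\,d\theta
\ =\ \frac{n'!\,n''!}{n!}\,\Haus{n'}(Z')\,\Haus{n''}(K),
\end{align*}
where the Beta integral $\int_0^1(1-u)^{n'}u^{n''-1}\,du=\tfrac{(n''-1)!\,n'!}{n!}$ was used in the last step. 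This is exactly the assertion, with $C(n)=\binom{n}{n''}=\binom{n}{n'}\le 2^n$.

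The argument is essentially routine; the only points demanding care are the validity of the Brunn--Minkowski concavity near $\partial K$ (where the slices $Z_{x''}$ may be empty or lower dimensional, so one works on the relative interior of $K$ and uses that a nonnegative concave function on a segment has finite nonnegative one-sided limits at its endpoints, which is all that is needed since the lower bound on $g$ is required only for a.e.\ $r$) and the polar decomposition when $\bar x''\in\partial K$ (harmless, since then $\rho(\theta)=0$ for the outward directions, which contribute nothing). A purely elementary alternative, closer in spirit to the rest of the paper and avoiding Brunn--Minkowski, would instead use John's lemma (Lemma~\ref{L:John}) to inscribe in $K$ a simplex $\Delta=\co\{v_0,\dots,v_{n''}\}$ with $\Haus{n''}(\Delta)\ge c(n'')\Haus{n''}(K)$, lift each $v_j$ to a point $p_j\in Z$ with $\pi''(p_j)=v_j$, and bound $\Leb{n}(Z)$ from below by the volume of the cone $\co\bigl(Z'\cup\{p_0,\dots,p_{n''}\}\bigr)\subset Z$ through a barycentric change of variables; this yields the same inequality with a slightly larger constant.
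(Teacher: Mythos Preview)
Your argument is correct and in fact yields the sharp constant $C(n)=\binom{n}{n'}$.  It differs genuinely from the paper's proof, which is more elementary but less precise: the paper first applies John's lemma to normalize $\pi''(Z)$ so that $B_r\subset\pi''(Z)\subset B_{n''r}$, then lifts the $n''$ points $r\hat e_1,\dots,r\hat e_{n''}$ to points $x_1,\dots,x_{n''}\in Z$ and bounds $\Leb{n}(Z)$ from below by the volume of the iterated cone $\co(x_2,\dots,x_{n''},\co(Z',x_1))$.  This is essentially the ``purely elementary alternative'' you sketch at the end (with the simplex replaced by orthogonal basis vectors coming from John's ellipsoid), so you have anticipated the paper's method as well.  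Your Brunn--Minkowski route is cleaner, avoids John's lemma, and produces an explicit dimensional constant, while the paper's cone construction trades a worse constant for a proof that stays within the elementary convex-geometry toolkit used elsewhere in the article.
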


\begin{figure}
\begin{center}
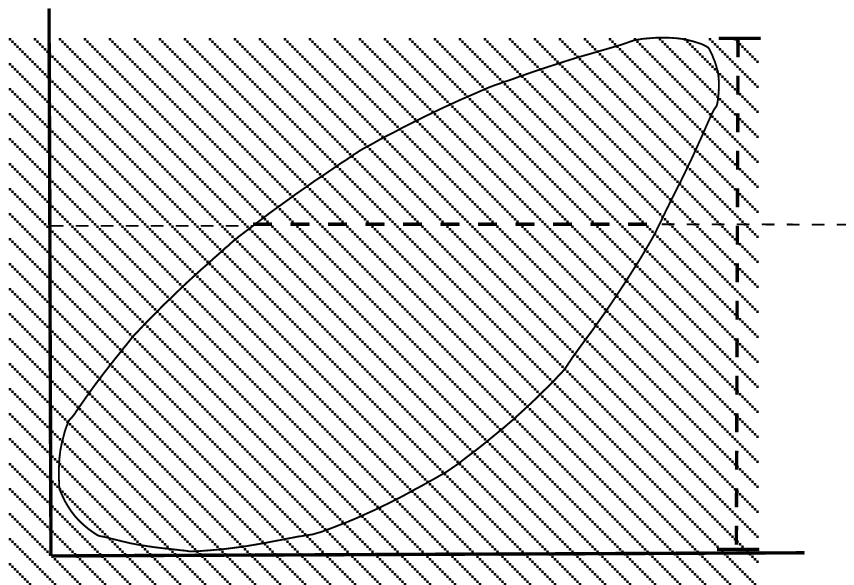\caption{{\small The volume of any convex set always controls the
product (measure of one slice) $\cdot$ (measure of the projection
orthogonal to the slice).}}\label{figconvex}
\end{center}
\end{figure}

\begin{proof}
Let $L:\R^{n''} \to \R^{n''}$ be an affine map with determinant $1$ given by
Lemma~\ref{L:John} such that $B_r \subset L(\pi''(Z)) \subset B_{n''r}$ for some $r>0$.
Then, if we extend $L$ to the whole $\R^n$ as $\tilde L(x',x'')=(x',Lx'')$, we have
$
\Leb{n}(L(Z))=\Leb{n}(Z)$, $\Haus{n'}(\tilde L(Z'))=\Haus{n'}(Z')$, and
$$
\Haus{n''}(\pi'' (\tilde L(Z)))=\Haus{n''}(L(\pi'' (Z)))=\Haus{n''}(\pi'' (Z)).
$$
Hence, we can assume from the beginning that $B_r \subset \pi''(Z)
\subset B_{n''r}$. Let us now consider the point $\bar x''$, and
we fix an orthonormal basis $\{\hat e_1,\ldots, \hat e_{n''}\}$ in
$\R^{n''}$ such that $\bar x''=c \hat e_1$ for some $c\leq 0$.
Since $\{r \hat e_1,\ldots,r \hat e_{n''}\} \subset \pi''(Z)$,
there exist points $\{x_1,\ldots,x_{n''}\} \subset Z$ such that
$\pi''(x_i)=r \hat e_i$. Let $C'$ denote the convex hull of $Z'$
with $x_1$, and let $V'$ denote the $(n'+1)$-dimensional strip
obtained taking the convex hull of $\R^{n'}\times\{\bar x''\}$
with $x_1$. Observe that $C'\subset V'$, and so
\begin{equation}
\label{eq:C'}
\Haus{n'+1}(C')=\frac{1}{n'+1}\dist(x_1,\R^{n'}\times\{\bar x''\})\Haus{n'}(Z') \geq \frac{r}{n'+1}\Haus{n'}(Z').
\end{equation}
We now remark that, since $\pi''(x_i)=r \hat e_i$ and $\hat e_i
\perp V'$ for $i=2,\ldots,n''$, we have $\dist(x_i,V')=r$ for all
$i=2,\ldots,n''$. Moreover, if $y_i\in V'$ denotes the closest
point to $x_i$, then the segments joining $x_i$ to $y_i$ parallels
$\hat e_i$, hence these segments are all mutually orthogonal, and
they are all orthogonal to $V'$ too. From this fact it is easy to
see that, if we define the convex hull
$$
C:=\co(x_2,\ldots,x_{n''},C'),
$$
then, since $|x_i-y_i|=r$ for $i=2,\ldots,n''$, by \eqref{eq:C'} and the inclusion $\pi''(Z) \subset B_{n''r}\subset \R^{n''}$ we get
$$
\Leb{n}(C)= \frac{(n'+1)!}{n!}\Haus{n'+1}(C') r^{n''-1} \geq
\frac{n'!}{n!}\Haus{n'}(Z') r^{n''} \geq
C(n)\Haus{n'}(Z')\Haus{n''}(\pi'' (Z)).
$$
This concludes the proof, as $C \subset Z$.
\end{proof}

\subsection{An Alexandrov type estimate}\label{SS:alex}

The next Alexandrov type lemma
holds for localized sections $Z$ of $\tc$-convex functions.

\begin{lemma}[Alexandrov type estimate and lower barrier]
\label{L:prop-infleq}
Assume \Bzero--\Bthree\ and
let $\tu:\cl \U_\ty \longmapsto \R$ be a convex $\tilde c$-convex function
from Theorem \ref{thm:apparently convex}.
Let $Z$ denote the section $\{ \tilde u \le 0\}$, assume $\tu =0$ on $\partial Z$,
and fix $\tilde q \in \intr Z$.
Let $\Pi^+,\Pi^-$ be two parallel hyperplanes contained in $\R^n
\setminus Z$ and touching $\p Z$ from two opposite sides. Then
there exists $\e_c'(n) >0$ small, depending only on dimension and
the cost function (with $\e_c'(n)= \e_c/C(n)$ given by Proposition
\ref{P:partial c-cone})
such that if $\diam(Z) \leq \e'_{c}(n)$ then
$$
|\tu(\tilde q)|^n \leq C(n) \gamma^+_\tc(Z \times \V)
\frac{\min\{\dist(\tilde q,\Pi^+),\dist(\tilde
q,\Pi^-)\}}{\ell_{\Pi^+}}|\p^\tc\tu|(Z) \Leb{n}(Z),
$$
where $\ell_{\Pi^+}$ denotes the maximal length among all
the segments obtained by intersecting $Z$ with a line orthogonal
to $\Pi^+$,
and $\gamma^\pm_\tc = \gamma^+_\tc(Z \times \V)$ is defined as in (\ref{Jacobian bound}).
\end{lemma}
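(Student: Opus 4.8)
The plan is to treat this as a variant of the proof of Proposition~\ref{P:partial c-cone}: associate to the section $Z=\{\tu\le 0\}$ and the vertex $\tq$ the $\tc$-cone $h^\tc$ of Definition~\ref{def:ccone}, run the cone estimate on it, and at the very end replace the ordinary Monge-Amp\`ere mass $|\p h^\tc|(\{\tq\})$ by the $\tc$-Monge-Amp\`ere mass $|\p^\tc\tu|(Z)$.

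First I would reduce and set up. If $\tu(\tq)=0$ the asserted inequality is trivial, since its right-hand side is non-negative; so assume $\tu(\tq)<0$ and let $h^\tc:\cl\U_\ty\longmapsto\R$ be the $\tc$-cone generated by $\tq\in\intr Z$ and $Z$ with height $-\tu(\tq)>0$. By Lemma~\ref{lemma:propccone}(a)--(c) it is convex (here \Bthree\ is used), satisfies $h^\tc(\tq)=\tu(\tq)$, and vanishes on $\p Z$; and since $\diam(Z)\le\e'_c(n)=\e_c/C(n)$, Proposition~\ref{P:partial c-cone} applies to $h^\tc$ with the given hyperplanes $\Pi^\pm$ and yields
\[
|\tu(\tq)|^n=|h^\tc(\tq)|^n\le C(n)\,\frac{\min\{\dist(\tq,\Pi^+),\dist(\tq,\Pi^-)\}}{\ell_{\Pi^+}}\,|\p h^\tc|(\{\tq\})\,\Leb{n}(Z).
\]
It then remains only to prove $|\p h^\tc|(\{\tq\})\le\gamma^+_\tc(Z\times\V)\,|\p^\tc\tu|(Z)$, which I would obtain by revisiting the proof of Proposition~\ref{P:partial c-cone}. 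There the lower bound on $\Leb{n}(\p h^\tc(\tq))=|\p h^\tc|(\{\tq\})$ is produced by exhibiting, inside the convex body $\p h^\tc(\tq)$, the convex hull $C$ of the origin $\zero=-D_q\tc(\tq,\ty)$ together with $n$ almost-orthogonal vectors $-D_q\tc(\tq,y_i(t_i))$ of length at least $|\tu(\tq)|/\dist(\tq,\Pi^i)$, where each $y_i(t_i)\in\p^\tc h^\tc(\tq)$ and also $\ty\in\p^\tc h^\tc(\tq)$ (recall $\tc(\cdot,\ty)\equiv 0$ and that $h^\tc$ is minimized at $\tq$). Every vertex of $C$ therefore lies in $-D_q\tc(\tq,\p^\tc h^\tc(\tq))$, and this set is convex by Loeper's local-to-global principle (Corollary~\ref{C:local-global}, applied to the $\tc$-convex function $h^\tc$ at $\tq$); hence $C\subset -D_q\tc(\tq,\p^\tc h^\tc(\tq))$. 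Since $y\mapsto -D_q\tc(\tq,y)$ is a diffeomorphism onto its image by \Bone, with $|\det D^2_{qy}\tc(\tq,y)|\le\gamma^+_\tc(Z\times\V)$ for $y\in\V$, the area formula gives $\Leb{n}(C)\le\gamma^+_\tc(Z\times\V)\,\Leb{n}(\p^\tc h^\tc(\tq))$, and Lemma~\ref{lemma:propccone}(d) gives $\Leb{n}(\p^\tc h^\tc(\tq))=|\p^\tc h^\tc|(\{\tq\})\le|\p^\tc\tu|(Z)$. Feeding $\Leb{n}(C)\le\gamma^+_\tc(Z\times\V)\,|\p^\tc\tu|(Z)$ back into the remainder of the proof of Proposition~\ref{P:partial c-cone}--- i.e. combining the lower bound on $\Leb{n}(C)$ with the choice of auxiliary orthogonal hyperplanes $\Pi^2,\dots,\Pi^n$ satisfying \eqref{eq:choice hyperpl} and with the slice estimate Lemma~\ref{lemma:orthogonal sections}, which bound $\prod_{i=2}^n\dist(\tq,\Pi^i)$ by $C(n)\,\Leb{n}(Z)/\ell_{\Pi^+}$ --- produces exactly the claimed inequality.

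The one genuinely delicate point is this upgrade. One cannot simply quote Proposition~\ref{P:partial c-cone} as a black box and then bound $|\p h^\tc|(\{\tq\})$ by a multiple of $|\p^\tc h^\tc|(\{\tq\})$: the natural inclusion $-D_q\tc(\tq,\p^\tc h^\tc(\tq))\subset\p h^\tc(\tq)$ runs the wrong way, and at the cone vertex (a non-smooth point) $\p h^\tc(\tq)$ may be strictly larger than the image of $\p^\tc h^\tc(\tq)$. What rescues the estimate is that the \emph{particular} convex set used in that proof to bound $\p h^\tc(\tq)$ from below is itself the convex hull of points of $-D_q\tc(\tq,\p^\tc h^\tc(\tq))$, so the convexity of the latter (Corollary~\ref{C:local-global}) already forces it inside, and the passage to the $\tc$-Monge-Amp\`ere measure then costs only the single Jacobian factor $\gamma^+_\tc(Z\times\V)$. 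Once this observation is in place, the rest of the bookkeeping --- the choice of the $\Pi^i$ and the volume/slice estimates --- is identical to the proof of Proposition~\ref{P:partial c-cone}, so I expect no further difficulty.
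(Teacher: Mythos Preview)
Your argument is correct, but it takes a detour that the paper avoids. The paper quotes Proposition~\ref{P:partial c-cone} exactly as a black box to get
\[
|\tu(\tq)|^n=|h^\tc(\tq)|^n\le C(n)\,\frac{\min\{\dist(\tq,\Pi^+),\dist(\tq,\Pi^-)\}}{\ell_{\Pi^+}}\,|\p h^\tc|(\{\tq\})\,\Leb{n}(Z),
\]
and then passes from $|\p h^\tc|(\{\tq\})$ to $|\p^\tc h^\tc|(\{\tq\})$ in one step using the \emph{equality}
\[
\p h^\tc(\tq)=-D_q\tc(\tq,\p^\tc h^\tc(\tq)),
\]
which immediately gives $|\p h^\tc|(\{\tq\})\le\gamma^+_\tc(Z\times\V)\,|\p^\tc h^\tc|(\{\tq\})$. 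Combined with Lemma~\ref{lemma:propccone}(d), this finishes.

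Your worry that this inclusion ``runs the wrong way'' is unfounded: the reverse inclusion $\p h^\tc(\tq)\subset -D_q\tc(\tq,\p^\tc h^\tc(\tq))$ is exactly what Loeper's result (cited as Corollary~\ref{C:local-global}) provides. Since $h^\tc$ is semiconvex, every extreme point of $\p h^\tc(\tq)$ is a limit of gradients $Dh^\tc(q_k)=-D_q\tc(q_k,y_k)$ at nearby differentiability points, hence lies in $-D_q\tc(\tq,\p^\tc h^\tc(\tq))$; that set is convex by Corollary~\ref{C:local-global}, so it swallows all of $\p h^\tc(\tq)$. So the two subdifferentials match up even at the cone vertex, and there is no need to reopen the proof of Proposition~\ref{P:partial c-cone} to track the specific simplex $C$. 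What your longer route buys is self-containment: you never invoke the full subdifferential identity, only the convexity of $-D_q\tc(\tq,\p^\tc h^\tc(\tq))$ together with the fact that the particular points $y_i(t_i)$ produced there lie in $\p^\tc h^\tc(\tq)$.
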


\begin{proof}
 Fix $\tilde q \in Z$. Observe that $\tu = 0$ on $\p Z$ and consider the $\tc$-cone $h^\tc$
generated by $\tilde q$ and $Z$ of height $-h^\tc(\tq)=-\tu(\tq)$ as in \eqref{c-cone}.
From Lemma~\ref{lemma:propccone}(d) we have
$$
|\p^\tc h^\tc|(\{\tilde q\}) \le |\p^\tc\tu|(Z),
$$
\noindent
and from Loeper's local to global principle, Corollary \ref{C:local-global} above,
$$
\p h^\tc(\tq)=-D_q \tc(\tq,\p^\tc h^\tc(\tq)).
$$
Therefore
$$
|\p h^\tc|(\{\tq\}) \le \| \det D^2_{qy} \tc\|_{C^0 (\{\tq\} \times \V)} |\p^ch^c|(\{\tq\}).
$$
The lower bound on $|\p h^\tc|(\{\tilde q\})$ comes from \eqref{p c-cone lower}. This finishes the proof.
\end{proof}

\subsection{Estimating solutions to the $\tc$-Monge-Amp\`ere inequality 
$|\p^\tc \tu| \in [ \lambda, 1/\lambda]$}

Combining the results of Proposition~\ref{prop:alex1} and Lemma \ref{L:prop-infleq} yields:

\begin{proposition}[Bounding local Dirichlet solutions to $\tc$-Monge-Amp\`ere inequalities]
\label{prop:estimate} Assume \Bzero--\Bthree\ and let $\tu:\cl
\U_\ty \longmapsto \R$ be a convex $\tilde c$-convex function from
Theorem \ref{thm:apparently convex}. There exists $\e_c'(n) >0$
small, depending only on dimension and the cost function (and
given by Lemma~\ref{L:prop-infleq}), and constants $C(n)$,
$C_i(n)>0$, $i=1,2$, depending only on dimension, such that the
following holds:
Letting $Z$ denote the section $\{ \tilde u \le 0\}$, assume
$|\p^\tc \tu| \in [\l, 1/\l]$ in $Z$ and $\tu = 0$ on $\p Z$. Let
$\Pi^+ \ne \Pi^-$ be parallel hyperplanes contained in $T_\ty^*\V
\setminus Z$ and supporting $Z$ from two opposite sides. If
$\diam(Z) \leq \e_c'(n)$ then
\begin{equation}\label{inf|Z|}
C_1(n) 
\frac\lambda{\gamma^-_\tc}
\leq \frac{|\inf_{Z} \tu|^n}{ \Leb{n}(Z)^2}
\leq \ C_2(n)
\frac{\gamma_\tc^+}{\lambda} 
\end{equation}
and
\begin{equation}\label{vardist}
\frac{|\tu(q)|^n}{\Leb{n}(Z)^2}
\le
C(n) \frac{\gamma_\tc^+}{\lambda}
\frac{\min\{\dist(q,\Pi^+),\dist(q,\Pi^-)\}}{ \ell_{\Pi^+}} 
\qquad \forall\,q \in \intr Z,
\end{equation}
where $\ell_{\Pi^+}$ denotes the maximal length among
all the segments obtained by intersecting $Z$ with a line
orthogonal to $\Pi^+$,
and $\gamma^\pm_\tc = \gamma^+_\tc(Z \times \V)$
is defined as in (\ref{Jacobian bound}).
%
%
\end{proposition}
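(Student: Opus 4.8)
The plan is to assemble the three estimates from the two results already in hand --- Proposition~\ref{prop:alex1}, which bounds $\Leb{n}(Z)$ from above in terms of $|\inf_Z \tu|$, and Lemma~\ref{L:prop-infleq}, which bounds $|\tu(q)|$ from above --- after inserting the two-sided hypothesis $|\p^\tc\tu| \in [\lambda,1/\lambda]$ on $Z$ at the right places, plus one elementary consequence of John's lemma (Lemma~\ref{L:John}).

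First I would handle \eqref{vardist}. For $q \in \intr Z$, Lemma~\ref{L:prop-infleq} applies (since $\diam Z \le \e_c'(n)$, $\tu$ is a convex $\tc$-convex function from Theorem~\ref{thm:apparently convex}, and $\tu = 0$ on $\p Z$) and gives
$$
|\tu(q)|^n \le C(n)\,\gamma^+_\tc\,\frac{\min\{\dist(q,\Pi^+),\dist(q,\Pi^-)\}}{\ell_{\Pi^+}}\,|\p^\tc\tu|(Z)\,\Leb{n}(Z).
$$
Since $|\p^\tc\tu| \le 1/\lambda$ on $Z$ forces $|\p^\tc\tu|(Z) \le \Leb{n}(Z)/\lambda$, dividing by $\Leb{n}(Z)^2$ yields \eqref{vardist}. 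For the left-hand inequality in \eqref{inf|Z|}, Proposition~\ref{prop:alex1} directly asserts $\Leb{n}(Z)^2 \le C(n)\gamma^-_\tc|\inf_Z\tu|^n/\lambda$, and rearranging gives it with $C_1(n) = 1/C(n)$; no smallness of $\diam Z$ is needed for this half.

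The only step demanding real work is the right-hand inequality in \eqref{inf|Z|}. Because $\tu$ is convex and continuous and $Z$ is compact, $\tu$ attains its minimum over $Z$ at some $\bar q$; if $\inf_Z\tu = 0$ the inequality is trivial, so assume $\inf_Z\tu < 0$, forcing $\bar q \in \intr Z$. Applying \eqref{vardist} at $q = \bar q$ with the \emph{given} hyperplanes $\Pi^\pm$ only controls $|\inf_Z\tu|^n/\Leb{n}(Z)^2$ by $C(n)\gamma^+_\tc/\lambda$ times the ratio $\min\{\dist(\bar q,\Pi^+),\dist(\bar q,\Pi^-)\}/\ell_{\Pi^+}$, which can be large for a badly oriented pair. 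Instead I would re-invoke Lemma~\ref{L:prop-infleq} at $\bar q$ with a \emph{different} pair of parallel supporting hyperplanes chosen via John's lemma: there is an ellipsoid $E$, centred at a point of $Z$, with $E \subset Z \subset nE$ (the $n$-fold dilation of $E$ about its centre). Take $e$ to be a unit vector along a longest semi-axis of $E$, of half-length $a$, and let $\bar\Pi^\pm$ be the two supporting hyperplanes of $Z$ orthogonal to $e$; then $\ell_{\bar\Pi^+} \ge 2a$ (the major axis of $E$ is a chord of $Z$ of that length, parallel to $e$), while the width of $Z$ in direction $e$ is at most that of $nE$, i.e.\ $2na$, so $\min\{\dist(\bar q,\bar\Pi^+),\dist(\bar q,\bar\Pi^-)\} \le na \le \tfrac{n}{2}\,\ell_{\bar\Pi^+}$. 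Feeding this geometric bound together with $|\p^\tc\tu|(Z) \le \Leb{n}(Z)/\lambda$ into Lemma~\ref{L:prop-infleq} at $\bar q$ produces $|\inf_Z\tu|^n \le \tfrac{n}{2}\,C(n)\gamma^+_\tc\,\Leb{n}(Z)^2/\lambda$, i.e.\ the upper bound in \eqref{inf|Z|} with $C_2(n) = \tfrac{n}{2}C(n)$. I expect the convex-geometry observation --- that along a longest John axis the maximal chord of $Z$ is comparable to its width --- to be the only genuinely non-routine point; given it, the proposition reduces to substituting the Monge-Amp\`ere bounds into Lemma~\ref{L:prop-infleq} and Proposition~\ref{prop:alex1}.
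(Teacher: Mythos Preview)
Your proof is correct and follows the same overall architecture as the paper's --- \eqref{vardist} from Lemma~\ref{L:prop-infleq} plus $|\p^\tc\tu|(Z)\le \Leb{n}(Z)/\lambda$, the lower bound in \eqref{inf|Z|} from Proposition~\ref{prop:alex1}, and the upper bound via John's lemma and a well-chosen pair of hyperplanes orthogonal to the longest John axis.

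There is one genuine (minor) difference in how the upper bound in \eqref{inf|Z|} is obtained. You apply the Alexandrov estimate directly at the \emph{minimizer} of $\tu$, observing that for \emph{any} interior point the distance to the nearer of the two supporting hyperplanes is at most half the width of $Z$ in that direction, hence at most $na$, while $\ell_{\bar\Pi^+}\ge 2a$. The paper instead applies \eqref{vardist} at the \emph{barycentre} $\bar q$ of the John ellipsoid --- where the distance ratio is likewise bounded by $n/2$ --- and then needs an additional convexity step: since $\bar q$ divides any chord of $Z$ through it in ratio at most $n:1$, convexity of $\tu$ along the chord through $\bar q$ and the minimizer gives $|\inf_Z\tu|\le n\,|\tu(\bar q)|$. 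Your route avoids this extra comparison and is a shade more direct; the paper's route has the mild advantage of making explicit where the John centre sits relative to the section, which is a fact sometimes reused elsewhere. Either way the resulting constant is $C_2(n)=\tfrac{n}{2}C(n)$ up to the factor $n^n$ introduced by the paper's extra step.
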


\begin{proof}
Equation \eqref{vardist} follows from  Lemma~\ref{L:prop-infleq} and the assumption
$|\p^\tc \tu| \le 1/\l$.
Now, by Lemma~\ref{L:John}, we deduce that there exists an ellipsoid $E$ such that
$E \subset Z \subset nE$, where $nE$ denotes the dilation of $E$ by a factor $n$
with respect to its barycenter $\bar q$.
Taking  $\Pi^+$ and $\Pi^-$ orthogonal to one of the longest axes of $E$ and
$q=\bar q$ in \eqref{vardist} yields
\begin{align*}
|\tu(\bar q)|^n
& \leq C(n) \frac{\gamma_\tc^+}{\lambda} \frac{n}{2} \Leb{n}(Z)^2.
\end{align*}
On the other hand,  convexity of $\tu$ along the segment which crosses $Z$ and
passes through both $\bar q$ and the point $\tq$
where $\inf_Z \tu$ is attained implies
$$
|\inf_Z\tu|  \le n |\tu(\bar q)|,\\
$$
since the barycenter of $E$ divides the segment into a ratio at most $n:1$.
Combining these two estimates with \eqref{|Z| less inf varphi}
 we obtain \eqref{inf|Z|},
to complete the proof.
\end{proof}

\begin{remark}[Stability of bounds under affine
renormalization]{\rm
Noting $\gamma^\pm_\tc \le \gamma^+_c \gamma^-_c$ from Corollary
\ref{C:Jacobian transform}, we observe that the estimate
\eqref{inf|Z|} is stable under affine renormalization: let $L$ be
an affine transformation, and recall the renormalization
$$
\tu^*(q):=|\det L|^{-2/n}\tu(Lq).
$$
Then $L^{-1}(Z)$ is a section for $\tu^*$ and
$$
|\inf_{L^{-1}(Z)}\tu^*|^n=|\det L|^{-2}|\inf_Z \tu|^n \sim |\det
L|^{-2} \Leb{n}(Z)^2= \Leb{n}(L^{-1}(Z))^2.
$$
On the other hand, estimate \eqref{vardist} is not stable under
affine renormalization (a line orthogonal to $\Pi^+$ is not an
affinely invariant concept).
For this reason, both in the proof of the $c$-strict convexity
(Section~\ref{S:contact}) and in the proof of differentiability $u
\in C^1$ (Section~\ref{S:ContinuousInjection}) we apply our
Alexandrov estimates directly to the original sections, without
renormalizing them. Using this strategy, our estimates turn out to
be strong enough to adapt to our situation the strict convexity
and interior continuity theory of Caffarelli \cite{Caffarelli90}
\cite{Caffarelli92}. We perform this in the remainder of the
manuscript.}
\end{remark}
\section{The contact set is either a single point or crosses the domain}
\label{S:contact}

In this section and the final one, we complete the crucial step of
proving the strict $c$-convexity of the $c$-convex potentials
$u:\cl \U \longmapsto \R$ arising in optimal transport, meaning
$\p^c u(x)$ should be disjoint from $\p^c u(\tx)$ whenever $x,\tx
\in \U^\l$ are distinct. This is accomplished in
Theorem~\ref{T:strict convex}.
In this section, we show that, if the contact set 
does not consist of a single point, then it  extends to the
boundary of $\U$.
Our method relies on the non-negative cross-curvature \Bthree\ of
the cost $c$.

From now on we adopt the following notation: $a \sim b$ means that
there exist two positive constants $C_1$ and $C_2$, depending  on
$n$ and $\gamma^+_c \gamma^-_c/\l$ only, such that $C_1a\leq b\leq
C_2a$. Analogously we will say that $a \lesssim b$ (resp. $a
\gtrsim b$) if there exists a positive constant $C$, depending  on
$n$ and $\gamma^+_c \gamma^-_c/\l$ only, such that $a\leq C b$
(resp. $Ca \geq b$).

Recall that a point $x$ of a convex set $S \subset \R^n$ is {\em
exposed} if there is a hyperplane supporting $S$ exclusively at
$x$. Although the {\em contact set} $S := \p^\cs u^\cs(\ty)$ may
not be convex, it appears convex from $\ty$
 by Corollary \ref{C:local-global}, meaning
its image $q(S) \subset \U_\ty$ in the coordinates \eqref{q of x}
is convex.
The following theorem shows this convex set is either a singleton,
or contains a segment which stretches across the domain. We prove
it by showing the solution geometry near certain exposed points of
$q(S)$ inside $\U_\ty$ would be inconsistent with the bounds
established in the previous section.

\begin{theorem}[The contact set is either a single point or crosses the domain]
\label{thm:noexposed}
Assume \Bzero--\Bthree\ and
let $\uu$ be a $c$-convex solution of \eqref{eq:cMA} with $\U^\l \subset \U$ open.
Fix $\tx \in \U^\l$ and 
$\ty \in \p^c\uu(\tx)$, and define the contact set
$S:=\{x \in \cl \U \mid \uu(x)=\uu(\tx)-c(x,\ty)+c(\tx,\ty)\}$. 
Assume that $S \neq \{\tx \}$, i.e. it is not a singleton. Then
$S$ intersects $\p\U$.
\end{theorem}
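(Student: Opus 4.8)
The plan is to argue by contradiction: assume $S \subset \intr \U$ is compact (and not a singleton), pass to the cost-exponential coordinates $q = -D_y c(x,\ty)$ from $\ty$ in which, by Theorem~\ref{thm:apparently convex}, the potential $\tu(q) = \uu(x(q)) + c(x(q),\ty)$ is convex and $q(S) = \{\tu = \tu(q(\tx))\}$ is a genuine compact convex subset of $\U_\ty$, contained in the interior. Normalize so that $\tu(q(\tx)) = 0$, i.e.\ $q(S) = \{\tu \le 0\} =: Z$ and $\tu = 0$ on $\p Z$ (after discarding the part of the sublevel set away from $S$, or rather noting $S$ \emph{is} the zero-section). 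Since $q(S)$ is convex but not a point, it has an exposed point $q_* \in \p q(S)$; choose coordinates so that the supporting hyperplane at $q_*$ is $\{q_n = q_{*,n}\}$ and $q(S)$ lies in $\{q_n \le q_{*,n}\}$, touching it only at $q_*$.

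The heart of the argument is a renormalization-and-squeeze near $q_*$. For a small height $h>0$ consider the section $Z_h := \{q \in \U_\ty \mid \tu(q) \le \ell_h(q)\}$ where $\ell_h$ is an affine function chosen so that $Z_h$ is a small convex neighborhood of $q_*$ relative to $q(S)$ — concretely, $\ell_h$ is a supporting affine function to $\tu$ at a point just inside $q(S)$ near $q_*$, raised by $h$. Because $q_*$ is exposed, as $h \to 0$ the sections $Z_h$ shrink to $q_*$; in particular $\diam Z_h \to 0$, so eventually $\diam Z_h \le \e_c'(n)$ and Proposition~\ref{prop:estimate} applies to $\tu - \ell_h$ on $Z_h$ (the $\tc$-Monge–Amp\`ere measure is unchanged by subtracting an affine function, so $|\p^\tc(\tu-\ell_h)| \in [\lambda/\gamma_c^+, \gamma_c^-/\lambda]$ on $Z_h \cap q(\U^\l)$, which holds once $Z_h$ is small enough to sit inside $q(\U^\l)$ since $q_* \in q(S)$ maps under $\p^\tc \tu$ onto $\ty$ and $\tx \in \U^\l$). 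The point is to play the two Alexandrov bounds \eqref{inf|Z|} and \eqref{vardist} against the geometry: \eqref{inf|Z|} forces $|\inf_{Z_h}\tu - \ell_h|^n \sim \Leb{n}(Z_h)^2$, while \eqref{vardist} controls how fast $\tu - \ell_h$ grows transverse to $\Pi^\pm$. Taking $\Pi^+, \Pi^-$ orthogonal to the $q_n$-axis and using that $q(S)$ stays in a \emph{thin} slab near $q_*$ in the $q_n$ direction while possibly being long in the transverse directions, one extracts a contradiction: the factor $\min\{\dist(q,\Pi^+),\dist(q,\Pi^-)\}/\ell_{\Pi^+}$ in \eqref{vardist} becomes arbitrarily small for the point realizing $\inf_{Z_h}$, which conflicts with the lower bound in \eqref{inf|Z|}. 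In essence, an exposed point in the interior makes the section too flat in one direction to support the prescribed (bounded below) Monge–Amp\`ere mass, exactly as in Caffarelli's strict-convexity argument for the classical cost.

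The main obstacle I expect is \emph{setting up the correct sequence of sections and the right choice of supporting planes} so that Proposition~\ref{prop:estimate} can be invoked with constants under control: one must ensure $Z_h \subset q(\U^\l)$ (so the lower bound $\lambda$ on the $\tc$-Monge–Amp\`ere measure is available, not merely the global upper bound $\Lambda$), and one must track that the relevant $\gamma^\pm_\tc(Z_h \times \V)$ stay bounded as $h \to 0$ — which they do, since $Z_h \to \{q_*\}$ and $\gamma^\pm_\tc$ is controlled by $\gamma_c^+\gamma_c^-$ uniformly via Corollary~\ref{C:Jacobian transform}. A secondary subtlety is that $\tu$ is only convex, not smooth, so the "supporting affine function at a point just inside $q(S)$" must be taken in the subdifferential sense and one must verify the corresponding sections genuinely localize near $q_*$; this is where exposedness of $q_*$ (rather than mere extremality) is used. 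Finally, one translates the contradiction back: $q_*$ corresponds to a point $x_* \in S \cap \intr\U$, so if $S$ did not meet $\p\U$ then $q(S)$ would be a compact convex subset of the open set $\U_\ty$ with an exposed point in its interior-touching boundary, and the estimate above is violated — hence $S$ must reach $\p\U$, which is the claim. (Note Theorem~\ref{thm:bdry-inter} is the companion fact guaranteeing that, conversely, interior points of $\U^\l$ are not sent to $\p\V$, so the coordinate change from $\ty$ is well-behaved; it is not needed for the exposed-point argument itself but frames why the interior case is the only one to rule out here.)
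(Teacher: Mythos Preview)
Your overall strategy---pass to $\ty$-exponential coordinates, locate an exposed point $q_*$ of the convexified contact set, build shrinking sections around $q_*$, and force a contradiction between the two Alexandrov-type bounds---is exactly the paper's. But two steps in your execution do not go through as written.

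\textbf{First gap: affine tilting does not preserve the $\tc$-Monge--Amp\`ere structure.} You write that ``the $\tc$-Monge--Amp\`ere measure is unchanged by subtracting an affine function''. This is true for the ordinary Monge--Amp\`ere measure $|\p\tu|$, and it is what makes Caffarelli's argument run for the cost $-\langle q,y\rangle$; but for a general cost it is false. Subtracting a genuine affine $\ell_h(q)=\langle a,q\rangle + b$ from $\tu$ destroys $\tc$-convexity (the functions $q\mapsto -\tc(q,y)$ are convex but not affine), so $\tu-\ell_h$ is no longer of the form required by Theorem~\ref{thm:apparently convex}, and Proposition~\ref{prop:estimate} and Lemma~\ref{L:prop-infleq} cannot be invoked for it. The paper's remedy is to tilt \emph{within the cost structure}: instead of subtracting an affine function, one replaces the supporting ``mountain'' $-\tc(\cdot,\ty)$ by $-\tc(\cdot,y_\e)$ for $y_\e=\ty+\e\hat e_1$, obtaining sections $K_\e=\{u\le u(\bx)-c(\cdot,y_\e)+c(\bx,y_\e)\}$ that are genuine $\tc$-sections. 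One then changes coordinates a second time, from $\ty$ to $y_\e$, so that the new potential $\tu_\e$ is convex and $\tc_\e$-convex and the Alexandrov estimates apply to it with the $\tc_\e$-Monge--Amp\`ere measure under control.

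\textbf{Second gap: the exposed point need not lie in $q(\U^\l)$.} You assert that ``$Z_h$ is small enough to sit inside $q(\U^\l)$ since $q_*\in q(S)$ maps under $\p^\tc\tu$ onto $\ty$ and $\tx\in\U^\l$''. That reasoning is circular: knowing $\ty\in\p^\tc\tu(q_*)$ says nothing about whether $x(q_*)\in\U^\l$. The hypothesis only places $\tx$ in $\U^\l$; the contact set $S$ can extend well outside $\U^\l$, and your exposed point $q_*$ may land where only the global upper bound $|\p^c u|\le\Lambda$ is available, not the lower bound $\lambda$. Without the lower bound on $Z_h$, the first inequality in \eqref{inf|Z|} fails and the squeeze collapses. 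The paper handles this by working with \emph{two nested} sections: a small $K_\e$ near the exposed point $q^0$ (to which Lemma~\ref{L:prop-infleq} is applied using only the upper bound $\Lambda$), and a larger $K_\e^1$ whose boundary is forced through $\tx\in\U^\l$, so that $K_\e^1\cap\U^\l$ occupies a stable volume fraction of $K_\e^1$ as $\e\to 0$. Lemma~\ref{lemma-infgeq} together with Proposition~\ref{P:c-ma less ma} is applied on this larger section, capturing enough $\tc$-mass from the region where $|\p^c u|\ge\lambda$ to yield the lower Alexandrov bound. The two sections are linked by comparing $\tu_\e(q_\e^0)$ with $\tu_\e^1(q_\e^0)$ and with $\inf_{K_\e^1}\tu_\e^1$ via explicit first-order asymptotics in $\e$.
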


\begin{proof}
As in Definition \ref{D:cost exponential}, we transform $(x, \uu)
\longmapsto (q, \tu)$ with respect to $\ty$, i.e. we consider the
transformation $q \in \cl \U_\ty \longmapsto x(q) \in \cl\U$,
defined on $\cl\U_\ty := -D_yc(\cl\U,\ty) \subset T^*_\ty \V$ by
the relation
$$
-D_y c(x(q),\ty)=q,
$$
and the modified cost function
$
\tilde c(q,y):=c(x(q),y)-c(x(q),\ty)
$
on $\cl \U_\ty \times \cl \V$, for which
the $\tc$-convex potential function
$
q \in \cl \U_\ty \longmapsto \tu(q):=\uu(x(q)) + c(x(q),\ty)
$
is convex.
We observe that $\tc(q, \ty) \equiv 0$ for all $q$, and moreover the set $S = \p^\cs u^\cs(\ty)$
appears convex from $\ty$, meaning $S_\ty := -D_y c(S, \ty) \subset \cl\U_{\ty}$ is convex,
by the Corollary \ref{C:local-global} to Loeper's maximum principle.

Our proof is reminiscent of Caffarelli's for the cost $\tc(q,y) =
-\<q, y\>$ \cite[Lemma 3]{Caffarelli92}. Observe $\tq := - D_y
c(\tx,\ty)$ lies in the interior of the set $\U^\l_\ty := -D_y
c(\U^\l,\ty)$ where $|\p^\tc \tu| \in
[\l/\gamma^+_c,\gamma^-_c/\l]$, according to Corollary
\ref{C:Jacobian transform}. Choose the point $\qo \in S_\ty
\subset \cl\U_\ty$ furthest from $\tq$; it is an exposed point of
$S_\ty$. We claim either $\qo = \tq$ or $\qo \in \partial U_\ty$.
To derive a contradiction,  suppose the preceding claim fails,
meaning $\qo \in U_\ty \setminus \{\tq\}$.

For a suitable choice of Cartesian coordinates on $\V$ we may,
without loss of generality,  take $\qo-\tq$ parallel to the positive $y^1$
axis. Denote by $\hat e_i$
the associated orthogonal basis for $T_\ty \V$, and set $\bo :=
\langle \qo, \hat e_1 \rangle$ and $\tb := \langle \tq, \hat e_1
\rangle$, so the halfspace $q_1 = \langle q,\hat e_1 \rangle \ge
\bo$ of $T^*_\ty \V \simeq \R^n$ intersects $S_\ty$ only at $\qo$.
Use the fact that $q^0$ is an exposed point of $S_\ty$ to cut a
corner $K_0$ off the contact set $S$ by choosing $\bs>0$ small
enough that $\bar b = (1-\bs) b^0 + \bs \tb$ satisfies:
\begin{figure}
\begin{center}
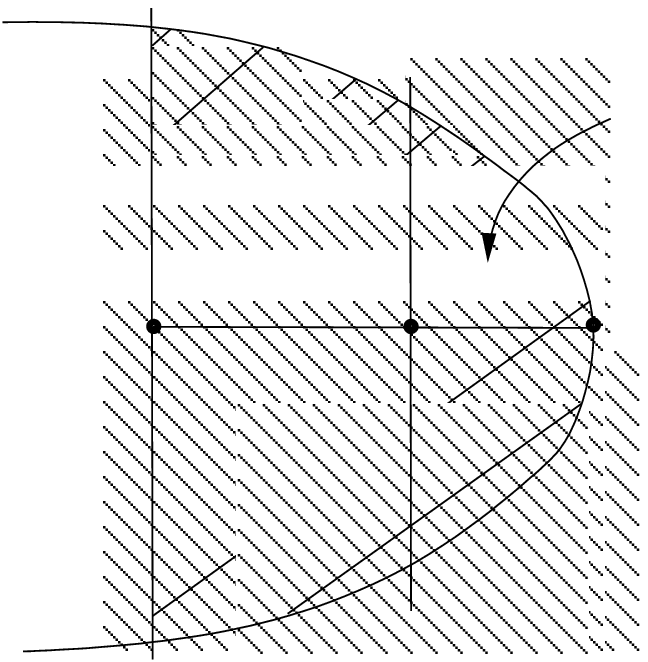\caption{{\small If the contact set $S_\ty$ has an exposed point $q_0$,
we can cut two portions of $S_\ty$
with two hyperplanes orthogonal to $\tq - q^0$. The diameter of
$-D_yc(K_0,\ty)$ needs to be sufficiently small to apply the
Alexandrov estimate Lemma \ref{L:prop-infleq}, while
$-D_yc(K_0^1,\ty)$ has to intersect $U_\ty^\l$ is a set of
positive measure to make use of Lemma \ref{lemma-infgeq} in
the case $q^0$ is not an interior point of $\supp |\p^\tc\uu
|$.}}\label{figcontactset}
\end{center}
\end{figure}
\begin{enumerate}
\item[(i)] $-D_y c(K_0,\ty):=S_{\ty}
\cap \{q \in \cl \U_\ty \mid q_1 \geq \bar b\}$ is a compact convex set in the interior of $\U_\ty$;
\item[(ii)] $\diam(-D_y c(K_0, \ty)) \le \e_c'/2$, where $\e_c'$ is from Lemma~\ref{L:prop-infleq}.
\end{enumerate}
Defining $q^s := (1-s) q^0 + s \tq$, $x^s := x(q^s)$ the
corresponding $c$-segment with respect to $\ty$, and $\bq =
q^\bs$, note that $S_\ty \cap \{q_1 = \bar b\}$ contains $\bq$,
and $K_0$ contains $\bx:= x^\bs$ and $x^0$. Since the corner $K_0$
needs not intersect the support of $|\p^c u|$ (especially,
when $q^0$ is not an interior point of $\supp |\p^c u|$),  we
shall need to cut a larger corner $K_0^1$ as well, defined by
$-D_y c(K^1_0,\ty):=S_{\ty} \cap \{q \in \cl \U_\ty \mid q_1 \geq
\tb\}$, which intersects $\U^\l$ at $\tx$.  By tilting the
supporting function slightly, we shall now define sections $K_\e
\subset K_\e^1$ of $u$ whose interiors include the extreme point
$x^0$ and whose boundaries pass through $\bx$ and $\tx$
respectively, but which converge to $K_0$ and $K_0^1$ respectively
as $\e \to 0$.

Indeed, set $y_\e := \ty + \e \hat e_1$ and observe
\begin{align}
m^s_\e (x) & := -c(x, y_\e) + c(x, \ty) + c(x^s, y_\e) - c(x^s, \ty)
\cr&= \e \langle -D_y c(x,\ty)+D_y c(x^s,\ty),\hat e_1 \rangle+o(\e)
\cr&= \e (\langle-D_y c(x,\ty), \hat e_1\rangle -(1-s)\bo - s\tb)+o(\e)
\label{e asymptotic}
\end{align}
Taking $s \in \{\bar s,1\}$ in this formula and $\e>0$ shows the sections defined by
\begin{align*}
K_\e &:=\{x \mid \uu(x) \leq \uu(\bx)-c(x,y_\e) +c(\bx,y_\e)\},\\
K^1_\e &:= \{x \mid \uu(x) \leq \uu(\tx)-c(x,y_\e) +c(\tx,y_\e)\},
\end{align*}
both include a neighbourhood of $\xo$ but
converge to $K_0$ and $K_0^1$ respectively as $\e \to 0$.
\begin{figure*}
\begin{center}
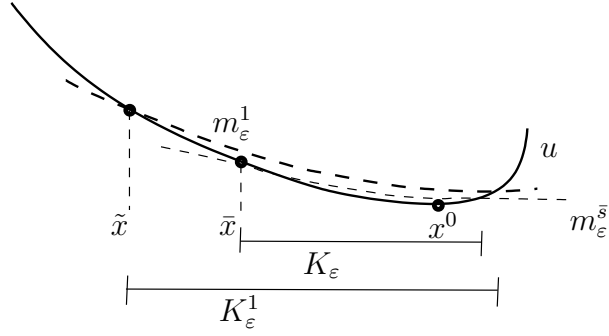\caption{{\small  We cut the graph of $u$ with the two functions $m_\e^{\bar s}$ and $m_\e^1$
to obtain two sets $K_\e \approx K_0$ and $K_\e^1 \approx K_0^1$
inside which we can apply our Alexandrov estimates to get a
contradiction (both Lemma \ref{lemma-infgeq} and Lemma \ref{L:prop-infleq}
to $K_\e$, but only Lemma \ref{lemma-infgeq} to $K_\e^1$ ). The idea is that the value of $u - m_\e^{\bar s}$
at $\xo$ is comparable to its minimum inside $K_\e$, but this is
forbidden by our Alexandrov estimates since $x_0$ is too close to
the boundary of $K_0^\e$. However, to make the argument work we
need also to take advantage of the section $K_\e^1$, in order to
``capture'' some positive mass of the $c$-Monge-Amp\`ere
measure.}}\label{figcutsection}
\end{center}
\end{figure*}

We remark that there exist a
priori no coordinates in which all set $K_\e$ are convex. However
for each fixed $\e>0$,
we can change coordinates so that both $K_\e$ and $K^1_\e$ become convex:
use $y_\e$ to make the transformations
\begin{align*}
q &:=-D_y c(x_\e(q),y_\e),
\\\tc_\e(q,y) &:=c(x_\e(q),y)-c(x_\e(q),y_\e),
\end{align*}
so that the functions
\begin{align*}
\tu_\e(q)&:=\uu(x_\e(q))+ c(x_\e(q),y_\e)-\uu(\bx)-c(\bx,y_\e), \\
\tu^1_\e(q)&:=\uu(x_\e(q))+ c(x_\e(q),y_\e)-\uu(\tx)-c(\tx,y_\e).
\end{align*}
are convex on $\U_{y_\e} := D_y c(\U,y_\e)$.
Observe that, in these coordinates, $K_\e$ and $K^1_\e$ become convex:
\begin{align*}
\tilde K_\e &:=-D_y c(K_\e,y_\e)=\{q \in \cl\U_{y_\e} \mid \tu_\e(q) \leq 0\}, \\
\tilde K^1_\e &:=-D_y c(K^1_\e,y_\e)=\{q \in \cl\U_{y_\e} \mid \tu^1_\e(q) \leq 0\},
\end{align*}
and either $\tilde K_\e \subset \tilde K^1_\e$ or $\tilde K^1_\e \subset \tilde K_\e$
since $\tu_\e(q) - \tu^1_\e(q) = const$.
For $\e>0$ small, the inclusion must be the first of the two since the limits satisfy
 $\tilde K_0 \subset \tilde K^1_0$ and $\tq \in \tilde K^1_0 \setminus \tilde K_0$.

In the new coordinates,  our original point $\tx \in \U^\l$, the
exposed point $x^0$, and the $c$-convex combination $\bx$ with
respect to $\ty$, correspond to
\begin{align*}
\tq_\e :=-D_y c(\tx,y_\e), \ \ 
q_\e^0 :=-D_y c(x^0,y_\e), \ \ 
\bq_\e :=-D_y c(\bx,y_\e). 
\end{align*}
Thanks to (ii), we have $\diam(\tilde K_\e) \leq \e_c'$  for $\e$ sufficiently small,
so that the estimate of Lemma \ref{L:prop-infleq} applies.
In these coordinates
(chosen for each $\e$) we consider the
parallel hyperplanes $\Pi^+_\e \ne \Pi^-_\e$ which support $\tilde K_\e \subset \cl\U_{y_\e}$
from opposite sides and which
are orthogonal to the segment joining $\qo_\e$ with $\bq_\e$. 
Since $\lim_{\e \to 0} \qo_\e - \bq_\e= \qo -\bq$ paralles the $\hat e_1$ axis
the limiting
hyperplanes $\Pi^\pm_0 = \lim_{\e \to 0}\Pi^\pm_\e$
must coincide with $\Pi^+_0 = \{ q \in T_\ty \V \mid q_1 = b^0\}$ and
$\Pi^-_0 = \{q \in T_\ty \V \mid q_1 = \bar b\}$.
Thus $q^0 \in \Pi^+_0$ and
$$
\frac{\dist(\qo_\e,\Pi^+_\e)}{|\qo_\e -\bq_\e|} \to 0
\ \text{ as $\e \to 0$}.
 $$
Observing that $|\qo_\e -\bq_\e|$ is shorter than segment obtained
intersecting $\tilde K_\e$ with the line orthogonal to $\Pi^+_\e$
and passing through $\qo_\e \in \intr \tilde K_\e$, Lemma
\ref{L:prop-infleq} combines with $K_\e \subset K^1_\e$ and
$|\p^{\tc_\e} \tu_\e|(K_\e) \le  \Lambda\gamma^-_c \Leb{n}(K_\e)$
from \eqref{eq:cMA} and Corollary \ref{C:Jacobian transform} to
yield
\begin{equation}\label{contradictory upper bound}
\frac{|\tu_\e(\qo_\e)|^n} { \Lambda\gamma^-_c \Leb{n}(\tilde K^1_\e)^2} \to 0 \qquad \text{as }\e \to 0.
\end{equation}

On the other hand,  $\bx \in S$ implies $\tu_\e(\qo_\e) = - m^\bs_\e(x^0)$,
and $\tx \in S$ implies $\tu^1_\e(\qo_\e) = -m^1_\e(x^0)$ similarly.
Thus \eqref{e asymptotic} yields
\begin{align}\label{comparison at q0}
\frac{\tu_\e (\qo_\e)}{\tu^1_\e (\qo_\e)}
= \frac{\e(b^0-\bar b) + o(\e)  }{\e (b^0 - \tb) + o(\e)}
\to \bs \quad {\rm as} \quad \e \to 0.
\end{align}
Our contradiction with \eqref{contradictory upper bound}--\eqref{comparison at q0}
will be established by bounding the
ratio $|\tu^1(\qo_\e)|^n/\Leb{n}(K^1_\e)^2$
away from zero.

Recall that
$$
b^0=\langle -D_y c(x^0,\ty), \hat e_1 \rangle \ =\ \max \{q_1 \mid q \in -D_y c(K_0,\ty) \}) \ >\  \tb
$$
and $\uu(x)-\uu(\tx)\geq -c(x,\ty)+c(\tx,\ty)$ with equality at $x^0$.
From the convergence of $K^1_\e$ to $K^1_0$ and the asymptotic behaviour \eqref{e asymptotic}
of $m^1_\e(x)$ we get
\begin{equation}\label{ratio var and inf var}
\begin{split}
\frac{\tu^1_\e(\qo_\e)}{\inf_{\tilde K^1_\e} \tu^1_\e}
&=\frac{-\uu(x^0) - c(x^0, y_\e)+\uu(\tx)+c(\tx,y_\e)}
{\sup_{q \in \tilde K^1_\e} [-\uu(x(q))- c(x(q),y_\e)+\uu(\tx)+c(\tx,y_\e)]}\\
& \ge \frac{- c(x^0, y_\e) + c(\tx, y_\e) + c(x^0, \ty) - c(\tx, \ty)}
{\sup_{x \in K^1_\e}[-c(x, y_\e) + c(\tx, y_\e) + c(x, \ty)- c(\tx, \ty)]} \\
&\geq \frac{\e (\langle-D_y c(x^0,\ty), e_1\rangle - \tb)+o(\e)}
 {\e (\max \{q_1 \mid q \in -D_y c(K^1_\e,\ty)\} - \tb)+o(\e) }\\
&\geq \frac{1}{2}
\end{split}
\end{equation}
for $\e$ sufficiently small.
This shows $\tu^1(q^0_\e)$ is close to the minimum value of $\tu^1_\e$. 
We would like to appeal to Lemma~\ref{lemma-infgeq}
to conclude the proof,
but are unable to do so since we only have bounds $|\p^c u| \in [\l,1/\l]$
on the potentially small intersection of $U^\l$ with $K^1_\e$.
However, this intersection occupies a stable fraction of $K^1_\e$ as
$\e \to 0$,
which we shall prove as in \cite[Lemma 3]{Caffarelli92}.

Since $K^1_\e$ converges to $K^1_0$ for sufficiently small $\e$,
observe that $K^1_\e$ is uniformly bounded.  Therefore the affine
transformation $(L^1_\e)^{-1}$ that sends $\tilde K^1_\e$ to $B_1
\subset \tilde K^{1, *}_\e \subset \cl B_n$ as in Lemma
\ref{L:John} is an expansion, i.e. $|(L^1_\e)^{-1} q -
(L^1_\e)^{-1}q'| \geq C_0|q-q'|$, with a constant $C_0>0$
independent of $\e$. Since $\tx$ is an interior point of $\U^\l$,
$B_{2\beta^-_c\d/C_0}(\tx) \subset \U^\l$ for sufficiently small
$\delta >0$, hence $B_{2\d/C_0}(\tq_\e) \subset \U^\l_{y_\e}$ with
$\beta^-_c$ from \eqref{bi-Lipschitzbound}. Defining
$\U^\l_{y_\e}:=-D_yc(\U^\l,y_\e)$, we have
$$
\U^{\l,*}_{y_\e}:=(L_\e^1)^{-1}(\U^\l_{y_\e}) \supset B_{2\d}(\tq_\e^*).
$$
Reducing $\delta$ if necessary to ensure $\delta <1$,  define (to apply Lemma~\ref{lemma-infgeq} later)
$$
\tilde K^{1,*}_{\e,\d}:=(1-\d)\tilde K^{1,*}_{\e}.
$$
(As in Lemma~\ref{lemma-infgeq}, $(1-\d)\tilde K^{1,*}_{\e}$ denotes the dilation of
$\tilde K^{1,*}_{\e}$ of a factor $(1-\delta)$ with respect to the origin.)
Since $\tilde K^{1,*}_{\e}$ is convex, it contains
the convex hull of $B_{1} \cup \{\tq^*_\e \}$, and so
$$
\Leb{n}(B_{2\d}(\tq_\e^*) \cap \tilde K^{1,*}_{\e,\d}) \ge C\d^n.
$$
for some constant $C=C(n)>0$ depending on dimension only.
Letting $\tilde K^1_{\e,\d}:=L^1_\e(\tilde K^{1,*}_{\e,\d})$
this implies
$$
\Leb{n}(\U^\l_{y_\e} \cap \tilde K^{1}_{\e,\d}) \ge C |\det L^1_\e|\d^n \sim \Leb{n}(\tilde K^1_\e)\delta^n.
$$
Recalling that $\gtrsim$ and $\lesssim$ denote inequalities which hold up
to multiplicative constants depending on $n,\lambda$ and $\gamma^+_c\gamma^+_c/\l$,
Proposition \ref{P:c-ma less ma} combines with this estimate to yield
$$
|\p \tu^1_\e|(\tilde K^1_{\e, \d})
\gtrsim|\p^{\tc_\e} \tu^1_\e|(\tilde K^1_{\e,\d}) 
\gtrsim \Leb{n}(K^1_\e) \delta^n,
$$
where (\ref{eq:cMA}) and Corollary \ref{C:Jacobian transform} have been used.
Finally, since the conclusion 
of Lemma \ref{lemma-infgeq} holds with or without stars
in light of \eqref{renormalized solution}--\eqref{compare p var with p var *},
taking $t=(1-\delta)$ in \eqref{inf geq}
yields
$$
\frac{|\inf_{\tilde K^1_\e} \tu^1_\e|^n}{\Leb{n}(\tilde K^1_\e)^2} \gtrsim \delta^{2n}.
$$
Since $\delta>0$ is independent of $\e$ this contradicts
\eqref{contradictory upper bound}--\eqref{comparison at q0} to complete the proof.
\end{proof}

\begin{remark}
{\rm 
As can be easily seen from the proof, one can actually show that if $U^\l=U$
and $S$ is not a singleton, then $S_\ty$ has no exposed points in the interior of $U_\ty$.
Indeed, if by contradiction there exists $\qo$ an exposed point of $S_\ty$
belonging to the interior of $U_\ty$, we can choose a point $\tq\in S_\ty$ in the interior of $U_\ty=U_\ty^\l$
such that the segment $\qo-\tq$ is orthogonal to a hyperplane supporting $S_\ty$ at $\qo$.
Then it can immediately checked that the above proof
(which could even be simplified in this particular case) shows that such a point $\qo$ cannot exist.
} 
\end{remark}

\section{Continuity and injectivity of optimal maps}
\label{S:ContinuousInjection}

The first theorem below combines results of Sections
\ref{S:mapping} and \ref{S:contact} to deduce strict $c$-convexity
of the $c$-potential for an optimal map, if its target is strongly
$c$-convex.  This strict $c$-convexity --- which is equivalent to
injectivity of the map
--- will then be combined with an adaptation of Caffarelli's argument \cite[Corollary 1]{Caffarelli90}
to obtain interior continuity of the map --- or equivalently
$C^{1}$-regularity of its $c$-potential function --- for
non-negatively cross-curved costs, yielding the concluding theorem
of the paper.

\begin{theorem}[Injectivity of optimal maps to a strongly $c$-convex target]
\label{T:strict convex}
Let $c$ satisfy \Bzero--\Bthree\ and \Btwos.
If $\uu$ is a $c$-convex solution of \eqref{eq:cMA} on $\U^\l \subset \U$ open,
then $\uu$ is strictly $c$-convex on $\U^\l$, meaning
$\p^c \uu (x)$ and $\p^c \uu(\tx)$ are disjoint whenever $x,\tx \in \U^\l$ are distinct.
\end{theorem}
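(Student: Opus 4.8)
The plan is to argue by contradiction, combining the two structural results established above: Theorem~\ref{thm:noexposed} (the contact set is either a single point or reaches $\p\U$) and Theorem~\ref{thm:bdry-inter} (under strong $c$-convexity of the domains, $\p^c\uu$ does not mix interiors with boundaries). Note first that \Btwos\ supplies exactly the convexity hypotheses these theorems require: $\U$ is strongly $c$-convex with respect to $\V$, and $\V$ is strongly $c^*$-convex with respect to $\cl\U$, hence with respect to any $X \subset \cl\U$.

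Suppose $\uu$ is not strictly $c$-convex on $\U^\l$, so that there exist distinct $x,\tx \in \U^\l$ and a common point $\ty \in \p^c\uu(x) \cap \p^c\uu(\tx)$. Subtracting the two instances $\uu(\cdot) + \uu^\cs(\ty) + c(\cdot,\ty) = 0$ of \eqref{c-subdifferential} at $x$ and at $\tx$ shows $x$ lies in the contact set $S := \{z \in \cl\U \mid \uu(z) = \uu(\tx) - c(z,\ty) + c(\tx,\ty)\}$, so $S \supseteq \{x,\tx\}$ is not a singleton. Since $\uu$ solves \eqref{eq:cMA} under \Bzero--\Bthree, Theorem~\ref{thm:noexposed}, applied with this $\tx \in \U^\l$ and $\ty \in \p^c\uu(\tx)$, produces a boundary point $x_b \in S \cap \p\U$.

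Now I would derive a contradiction by placing $\ty$ on both sides of $\p\V$. On one hand, the same subtraction as above gives $(x_b,\ty) \in \p^c\uu$; since $x_b \in \p\U$ and $|\p^c\uu| \le \Lambda$ on $\cl\U$ by \eqref{eq:cMA}, Theorem~\ref{thm:bdry-inter}(b) forbids $(x_b,\ty) \in \p\U \times \V$, so $\ty \notin \V$, and as $\p^c\uu(\cl\U) \subset \cl\V$ by Lemma~\ref{L:cMA properties}(a) this means $\ty \in \p\V$. On the other hand $\tx$ lies in the open set $\U^\l$, on which $|\p^c\uu| \ge \lambda$ by \eqref{eq:cMA}; applying Theorem~\ref{thm:bdry-inter}(a) with $X = \U^\l$ (whose openness makes $\p^c\uu$ disjoint from $\U^\l \times \p\V$, by the remark following that theorem) together with $(\tx,\ty) \in \p^c\uu$ forces $\ty \notin \p\V$. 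This contradiction shows no such $x,\tx,\ty$ exist, so $\uu$ is strictly $c$-convex on $\U^\l$. The substance of the argument lies entirely in Theorems~\ref{thm:bdry-inter} and~\ref{thm:noexposed}; the only care needed here is matching the two bounds in \eqref{eq:cMA} — the global upper bound $\Lambda$ on $\cl\U$ and the local lower bound $\lambda$ on $\U^\l$ — to parts (b) and (a) of Theorem~\ref{thm:bdry-inter} respectively, and noting that the case $\U^\l = \emptyset$ is vacuous.
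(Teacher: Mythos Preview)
Your proof is correct and follows essentially the same approach as the paper: both argue by contradiction, invoke Theorem~\ref{thm:noexposed} to produce a point of $S$ on $\p\U$, and then derive a contradiction by applying the two parts of Theorem~\ref{thm:bdry-inter} with the global upper bound $\Lambda$ and the local lower bound $\lambda$ from~\eqref{eq:cMA}. The only difference is the order in which parts~(a) and~(b) of Theorem~\ref{thm:bdry-inter} are invoked---the paper first uses~(a) to place $\ty$ in the interior of $\V$ and then~(b) to force $\bar x\in\U$, while you first use~(b) to push $\ty$ to $\p\V$ and then~(a) to rule that out---but this is a cosmetic reordering of the same contradiction.
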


\begin{proof}
Suppose by contradiction that $\ty \in \p^c \uu(x) \cap \p^c
\uu(\tx)$ for two distinct points $x,\tx \in \U^\l$, and set
$S=\p^\cs \uu^\cs(\ty)$. According to Theorem \ref{thm:noexposed},
the set $S$ intersects the boundary of $\U$ at a point $\bx \in \p
\U \cap \p^\cs u^\cs(\ty)$. Since \eqref{eq:cMA} asserts $\l \le
|\p^c u|$ on $\U^\l$ and $|\p^c u| \le \Lambda$ on $\cl \U$,
Theorem \ref{thm:bdry-inter}(a) yields $\ty \in \V$ (since $x, \tx
\in U^\l$), and hence $\bar x \in \U$ by Theorem
\ref{thm:bdry-inter}(b). This contradicts $\bar x \in \p \U$ and
proves the theorem.
\end{proof}

\begin{theorem}[Continuity of optimal maps to strongly $c$-convex targets]
\label{T:continuity} Let $c$ satisfy \Bzero--\Bthree\ and \Btwos.
If $\uu$ is a $c$-convex solution of \eqref{eq:cMA} on $\U^\l
\subset \U$ open, then $\uu$ is continuously differentiable inside
$\U^\l$.
\end{theorem}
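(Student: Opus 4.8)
The plan is to deduce $C^1$-regularity of $\uu$ on $\U^\l$ from the strict $c$-convexity just established in Theorem~\ref{T:strict convex}, following Caffarelli's argument for the classical cost \cite[Corollary~1]{Caffarelli90}. Recall that $\uu \in C^1(\U^\l)$ is equivalent to $\p^c\uu(\tx)$ being a singleton for each $\tx \in \U^\l$: indeed $\uu$ is semi-convex, hence differentiable at $\tx$ iff its subdifferential (equivalently, via \Bone\ and \eqref{implicit G}, its $c$-subdifferential) contains a single element. So the whole task is: \emph{suppose $y_0 \neq y_1$ both lie in $\p^c\uu(\tx_*)$ for some $\tx_* \in \U^\l$, and derive a contradiction.}

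First I would pass to the modified cost and potential of Theorem~\ref{thm:apparently convex} centered at a suitable target, so that $\tu$ is genuinely convex on $\cl\U_\ty$ and $\p^\tc\tu(q) = \p^c\uu(x(q))$; by Corollary~\ref{C:Jacobian transform} the bounds \eqref{eq:cMA} transform into $|\p^\tc\tu| \in [\l/\gamma^+_c, \gamma^-_c/\l]$ on $\U^\l_\ty$ and $|\p^\tc\tu|\le \Lambda\gamma^-_c$ on $\cl\U_\ty$. If $\tu$ fails to be differentiable at $q_* := -D_y c(\tx_*,\ty)$, then $\p\tu(q_*)$ (the ordinary subdifferential of the convex function $\tu$, which corresponds under the smooth change of variables $p \mapsto -D_q\tc(q_*,p)$ to $\p^\tc\tu(q_*)$) is a convex set of dimension $\ge 1$; pick two distinct exposed points $p_0 \neq p_1$ of it and a supporting affine function $\ell$ of $\tu$ whose contact set $S := \{\tu = \ell\}$ is exactly the fiber over $p_0$ --- Caffarelli's point is that one can choose $\ell$ so that $S$ contains $q_*$ in its interior relative to... no: rather, $S$ is the contact set of the supporting function associated to $p_0$, it contains $q_*$, and by Theorem~\ref{T:strict convex} applied in the original coordinates (equivalently, by the strict $c$-convexity) $S$ must be a single point --- \emph{unless} it crosses the domain. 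But wait: Theorem~\ref{thm:noexposed} already tells us that if $S$ is not a singleton it reaches $\p\U$; and Theorem~\ref{T:strict convex} rules out the non-singleton case entirely when the target is strongly $c$-convex. So $\p^c\uu$ is at worst... The cleaner route: since $\uu$ is strictly $c$-convex on $\U^\l$, for each $\ty$ the contact set $\p^{\cs}\uu^\cs(\ty)$ meets $\U^\l$ in at most one point; hence if $y_0 \in \p^c\uu(\tx_*)$ with $\tx_* \in \U^\l$, then $\tx_*$ is the \emph{only} point of $\U^\l$ in that contact set. Now run the section argument: for $\ty$ fixed (say $\ty = y_0$), consider small sections $Z_\delta := \{x : \uu(x) \le \uu(\tx_*) - c(x,\ty) + c(\tx_*,\ty) + \delta\}$ shrinking to $\{\tx_*\}$ as $\delta \to 0$ (they shrink to the contact set, which is $\{\tx_*\}$ by strict $c$-convexity, at least locally). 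In the modified convex coordinates these become convex sections $\tilde Z_\delta \ni q_*$ with $\tu = \const$ on $\p\tilde Z_\delta$ and $\inf = -\delta + \const$; for $\delta$ small, $\diam \tilde Z_\delta \le \e_c'(n)$ and $\tilde Z_\delta \subset \U^\l_\ty$, so Proposition~\ref{prop:estimate} applies and gives, via \eqref{inf|Z|},
\[
\delta^n = |\inf_{\tilde Z_\delta}\tu - \const|^n \sim \Leb{n}(\tilde Z_\delta)^2 .
\]
The contradiction with non-differentiability comes from comparing the location of $q_*$ inside $\tilde Z_\delta$ using \eqref{vardist}: if $\tu$ is not differentiable at $q_*$, then the sections $\tilde Z_\delta$ contain a segment through $q_*$ of length bounded below (the two slopes $p_0,p_1$ persist), forcing $q_*$ to be comparably far from \emph{both} supporting hyperplanes $\Pi^\pm$ orthogonal to $p_1 - p_0$; then \eqref{vardist} with $q = q_*$ forces $|\tu(q_*) - \const|^n \gtrsim \Leb{n}(\tilde Z_\delta)^2$ --- but $\tu(q_*) = \const$ (since $q_*$ is the contact point), i.e.\ the left side is $0$ while $\Leb{n}(\tilde Z_\delta) > 0$, a contradiction. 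Actually, more carefully, one compares $\tu(q_*) - \const = 0$ against a \emph{lower} bound of order $\Leb{n}(\tilde Z_\delta)^2 \cdot (\dist(q_*,\Pi^+)/\ell_{\Pi^+})$, which is strictly positive as soon as $q_*$ is interior and not pinned to $\Pi^\pm$; non-differentiability at $q_*$ guarantees exactly this non-degeneracy of the position of $q_*$ relative to $\Pi^\pm$, uniformly in $\delta$.

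The main obstacle, and the step requiring the most care, is the geometric input that \emph{non-differentiability of the convex function $\tu$ at the interior point $q_*$ forces $q_*$ to sit a definite (scale-invariant) distance away from the two opposite supporting hyperplanes $\Pi^\pm$ of every small section $\tilde Z_\delta$ through it}, so that the ratio $\min\{\dist(q_*,\Pi^+),\dist(q_*,\Pi^-)\}/\ell_{\Pi^+}$ in \eqref{vardist} stays bounded below as $\delta \to 0$. This is where the existence of two distinct exposed subgradients $p_0,p_1$ is used: the section $\tilde Z_\delta$ must then be ``fat'' in the $p_1 - p_0$ direction near $q_*$ (a wedge of both supporting planes of slopes $p_0,p_1$ is trapped inside), which pins $q_*$ away from the walls orthogonal to that direction. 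Once this is in hand, \eqref{vardist} at $q=q_*$ gives $0 = |\tu(q_*) - \inf_{\partial}|^n \gtrsim \Leb{n}(\tilde Z_\delta)^2 > 0$, the desired contradiction; therefore $\p\tu(q_*)$, hence $\p^c\uu(\tx_*)$, is a singleton, $\uu \in C^1(\U^\l)$, and the continuity of $G$ on $\U^\l$ follows from \eqref{implicit G} and \Bone. (One must also check the routine point that $Z_\delta$ does shrink to the contact set and that for small $\delta$ it stays within $\U^\l$ and within the small-diameter regime of Proposition~\ref{prop:estimate}; both are immediate from $\tx_* \in \U^\l$ open and the strict $c$-convexity already proved.)
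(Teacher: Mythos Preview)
Your proposal has the geometry exactly backwards, and this is a genuine gap, not a cosmetic one.

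You claim that non-differentiability of $\tu$ at $q_*$ forces the sections $\tilde Z_\delta$ to ``contain a segment through $q_*$ of length bounded below'', so that $q_*$ stays a definite scale-invariant distance from both hyperplanes $\Pi^\pm$, and then you try to derive $0 = |\tu(q_*)-\text{const}|^n \gtrsim \Leb{n}(\tilde Z_\delta)^2 >0$. Three things are wrong here. First, $q_*$ is the \emph{minimum} of $\tu$ on $\tilde Z_\delta$ (since $y_0\in\p^\tc\tu(q_*)$ means $0\in\p\tu(q_*)$), so the value of the shifted function at $q_*$ is $-\delta$, not $0$. Second, \eqref{vardist} is an \emph{upper} bound, not a lower bound; if $q_*$ were centered in the section the inequality would just read $\delta^n \lesssim \Leb{n}(\tilde Z_\delta)^2$, which combined with \eqref{inf|Z|} gives $\delta^n\sim\Leb{n}(\tilde Z_\delta)^2$ and no contradiction. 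Third, and most importantly, the sections $\tilde Z_\delta$ do \emph{not} contain a fixed-length segment: by the strict $c$-convexity you already proved they shrink to $\{q_*\}$. A kink in $\tu$ (subdifferential not a singleton) is dual to flatness (contact set not a singleton); you have confused the two.

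The correct mechanism, used in the paper, is the opposite: choose $y_0$ so that $0=-D_q\tc(q_*,y_0)$ is an \emph{exposed} point of $\p\tu(q_*)$, and pick $v\in\p\tu(q_*)\setminus\{0\}$ with $v^\perp$ supporting $\p\tu(q_*)$ at $0$. Then $\tu(q)\ge\langle v,q-q_*\rangle$ forces $\tilde Z_\delta\subset\{\langle v,q-q_*\rangle\le\delta\}$, so $q_*$ lies within distance $\delta/|v|$ of the supporting hyperplane $\Pi^+$; on the other hand the exposedness of $0$ gives $\tu(q_*-tv)=o(t)$, so $\tilde Z_\delta$ extends distance $\alpha(\delta)\gg\delta$ in the $-v$ direction. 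Thus $\dist(q_*,\Pi^+)/\ell_{\Pi^+}\to 0$, and since $q_*$ is the minimum point, \eqref{vardist} at $q=q_*$ combined with \eqref{inf|Z|} yields $\delta^n\lesssim (\delta/\alpha(\delta))\,\delta^n\to 0$, the desired contradiction. In short: the minimum point is pinched \emph{against} the boundary of the section, not away from it.
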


\begin{proof}
Recalling that $c$-convexity implies semiconvexity, all we need to
show is that the $c$-subdifferential $\p^{c} \uu(\tx)$ of $\uu$ at
every point $\tx \in \U_\l$ is a singleton.

Assume by contradiction that is not. As $\p^{c} \uu(\tx)$ is
compact, one can find a point $y_0$ in the set $\p^c\uu(\tx)$ such
that $-D_xc(\tx,y_0) \in \p\uu(\tx)$  is an exposed point of the
compact convex set $\p\uu(\tx)$. Similarly to Definition
\ref{D:cost exponential}, we transform $(x, \uu) \longmapsto (q,
\tu)$ with respect to $y_0$, i.e. we consider the transformation
$q \in \cl \U_{y_0} \longmapsto x(q) \in \cl\U$, defined on
$\cl\U_{y_0} = -D_yc(\cl\U,y_0) + D_yc(\tx,y_0)\subset T^*_{y_0}
\V$ by the relation
$$
-D_y c(x(q),y_0)+D_yc(\tx,y_0)=q,
$$
and the modified cost function $ \tilde
c(q,y):=c(x(q),y)-c(x(q),y_0) $ on $\cl \U_{y_0} \times \cl \V$,
for which the $\tc$-convex potential function $q \in \cl \U_{y_0}
\longmapsto \tu(q):=\uu(x(q)) -\uu(\tx)+ c(x(q),y_0) -c(\tx,y_0)$
is convex. We observe that $\tc(q, y_0) \equiv 0$ for all $q$, the
point $\tx$ is sent to $\zero$, $\tu\geq \tu(\zero)=0$, and $\tu$
is strictly convex thanks to Theorem \ref{T:strict convex}.
Moreover, since  $-D_xc(\tx,y_0) \in \p\uu(\tx)$ was an exposed
point of $\p\uu(\tx)$, $0=-D_q\tc(\zero,y_0)$ is an exposed point
of $\p\tu(\zero)$. Hence, we can find a vector $v \in \p\tu(\zero)
\setminus\{0\}$ such that the hyperplane orthogonal to $v$ is a
supporting hyperplane for $\p\tu(\zero)$ at $0$.
\begin{figure}
\begin{center}
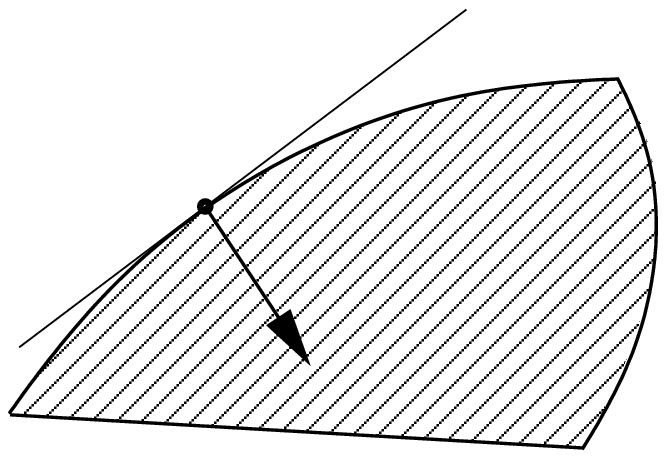\caption{{\small $v \in \p\tu(\zero)$ and the hyperplane orthogonal to $v$ is supporting $\p\tu(\zero)$ at
$0$.}}\label{figC1subdiff}
\end{center}
\end{figure}
Thanks to the convexity of $\tu$, this implies that
\begin{equation}
\label{eq:good v} \tu( -t v)=o(t) \quad \text{for }t \geq 0,\qquad
\tu(q) \geq \<v, q\>  + \tu(\zero)\quad \text{for all }q \in
\U_{y_0}.
\end{equation}
\begin{figure}
\begin{center}
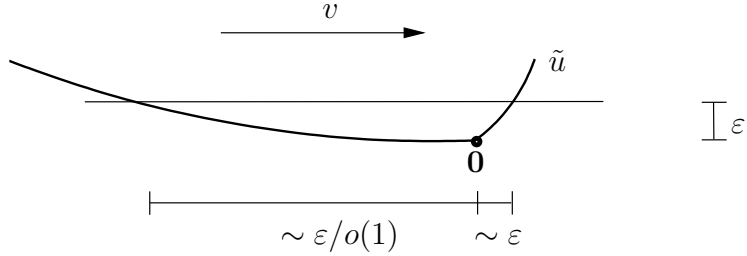\caption{{\small Since the hyperplane orthogonal to $v$ is supporting $\p\tu(\zero)$ at $0$, we have
$\tu(-tv)=o(t)$ for $t \geq 0$. Moreover, $\tu$ grows at least
linearly in the direction of $v$.}}\label{figC1sect}
\end{center}
\end{figure}
Let us now consider the section $K_\e:=\{\tu \leq \e\}$. Since
$\tu(\zero)=0$, $\tu \geq 0$ and $\tu$ is strictly convex, $K_\e
\to \{\zero\}$ as $\e \to 0$. Thus by \eqref{eq:good v} it is
easily seen that for $\e$ sufficiently small the following hold:
$$
K_\e \subset \{q \mid \<q, v\> \leq \e\},\qquad -\a(\e)v \in K_\e,
$$
where $\a(\e)>0$ is a positive constant depending on $\e$ and such
that $\a(\e)/\e \to +\infty$ as $\e \to 0$. Since $\zero$ is the
minimum point of $\tu$, this immediately implies that one between
our Alexandrov estimates \eqref{inf|Z|} or \eqref{vardist} must be
violated by $\tu$ inside $K_\e$ for $\e$ sufficiently small, which
is the desired contradiction.
\end{proof}

\end{document}